\newtheorem{theorem}{Theorem}[section]
\newtheorem{lemma}[theorem]{Lemma}
\newtheorem{proposition}[theorem]{Proposition}
\theoremstyle{remark}
\renewenvironment{proof}[1][Proof]{ {\itshape \noindent {#1.}} }{$\Box$
\medskip}
\numberwithin{equation}{section}
\newcommand{\R}{\mathbb{R}}
\newcommand{\Z}{\mathbb{Z}}
\newcommand{\Pb}{\mathbb{P}}
\newcommand{\E}{\mathbb{E}}
\newcommand{\F}{\mathcal{F}}
\newcommand{\G}{\mathcal{G}}
\newcommand{\W}{\mathscr{W}}
\newcommand{\C}{\mathcal{C}}
\newcommand{\cR}{\mathcal{R}}
\newcommand{\V}{\mathcal{V}}
\newcommand{\cV}{\mathscr{V}}
\newcommand{\Q}{\mathcal{Q}}
\newcommand{\cU}{\mathscr{U}}
\newcommand{\eps}{\varepsilon}
\def\les{\lesssim}
\newcommand{\Var}{\mathrm{Var}}
\newcommand{\la}{\langle}
\newcommand{\ra}{\rangle}
\newcommand{\X}{\mathbf{X}}
\newcommand{\Y}{\mathbf{Y}}
\newcommand{\1}{\mathbbm{1}}
\newcommand{\cH}{\mathcal{H}}
\newcommand{\ccH}{\mathscr{H}}
\newcommand{\cP}{\mathcal{P}}
\newcommand{\ccR}{\mathscr{R}}
\begin{document}

\title{Gaussian fluctuations from the 2D KPZ equation}
\author{Yu Gu}

\address[Yu Gu]{Department of Mathematics, Carnegie Mellon University, Pittsburgh, PA 15213, USA}

%\address[Li-Cheng Tsai]{Department of Mathematics, Columbia University,
%2990 Broadway, New York, NY 10027}

\maketitle

\begin{abstract}
We prove the two dimensional KPZ equation with a logarithmically tuned nonlinearity and a small coupling constant,   scales to the Edwards-Wilkinson equation with an effective variance.

\bigskip

\noindent \textsc{MSC 2010:} 		35R60, 60H07, 60H15.

\medskip

\noindent \textsc{Keywords:} KPZ equation, Edwards-Wilkinson equation, Feynman-Kac formula.

\end{abstract}
\maketitle

\section{Introduction}
\subsection{Main result}
We are interested in the two dimensional KPZ equation driven by a mollified spacetime white noise and starting from flat initial data:
\begin{equation}\label{e.maineq1}
\partial_t h_\eps=\tfrac12\Delta h_\eps+\tfrac{\beta}{2\sqrt{|\log\eps|}}|\nabla h_\eps|^2 + \dot{W}_\eps(t,x), \    \ h_\eps(0,x)\equiv0,   \quad  x\in\R^2,
\end{equation}
where
\[
\dot{W}_\eps(t,x)=\tfrac{1}{\eps^2} \int_{\R^2} \varphi(\tfrac{x-y}{\eps}) \dot{W}(t,y)dy,
\]
with $\dot{W}$ a spacetime white noise built on the probability space $(\Omega,\F,\Pb)$ and $0\leq \varphi\in \C_c^\infty(\R^2)$. The covariance function of $\dot{W}_\eps$ is formally written as
\begin{equation}\label{e.covfunction}
\E[\dot{W}_\eps(t,x)\dot{W}_\eps(s,y)]=\delta(t-s)\tfrac{1}{\eps^2}R(\tfrac{x-y}{\eps}), \quad \mbox{ with } \quad R(x)=\int_{\R^2}\varphi(x+y)\varphi(y)dy.
\end{equation}
Without loss of generality, we assume $\varphi(x)=0$ for $|x|\geq\tfrac12$ and  $\int_{\R^2} \varphi(x)dx=1$. The following is our main result:
\begin{theorem}\label{t.mainth}
There exists $\beta_0$ depending on $\varphi$ such that if $\beta<\beta_0\leq\sqrt{2\pi}$,  then for any $t>0$ and test function $g\in\C_c^\infty(\R^2)$, we have
\begin{equation}\label{e.mainre}
\int_{\R^2} \left(h_\eps(t,x)-\E[h_\eps(t,x)]\right)g(x)dx
\Rightarrow \int_{\R^2} \ccH(t,x) g(x)dx
\end{equation}
in distribution as $\eps\to0$, where $\ccH$ solves the Edwards-Wilkinson equation 
\[
\partial_t \ccH=\tfrac12\Delta \ccH+ \nu_{\mathrm{eff}}  \dot{W}(t,x), \   \ \ccH(0,x)\equiv 0,
\]
with the effective variance
\begin{equation}\label{e.effvar}
\nu_{\mathrm{eff}}^2= \tfrac{2\pi }{2\pi-\beta^2}.%, \quad \hat{R}(0)=\int_{\R^2} R(x)dx.
\end{equation}
\end{theorem}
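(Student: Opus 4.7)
The plan is to reduce the problem to the stochastic heat equation through the Hopf--Cole transform. Setting $\beta_\eps := \beta/\sqrt{|\log\eps|}$ and $Z_\eps := \exp(\beta_\eps h_\eps - c_\eps t)$ with $c_\eps := \tfrac12 \beta_\eps^2 \eps^{-2} R(0)$ the Itô renormalization constant, the field $Z_\eps$ solves the renormalized multiplicative stochastic heat equation $\partial_t Z_\eps = \tfrac12 \Delta Z_\eps + \beta_\eps Z_\eps \dot W_\eps - c_\eps Z_\eps$ with $Z_\eps(0,\cdot)\equiv 1$ and $\E Z_\eps \equiv 1$, and admits the Feynman--Kac representation
\[
Z_\eps(t,x)=\E_B\!\left[\exp\!\left(\beta_\eps \int_0^t \dot W_\eps(t-s, x+B_s)\, ds - c_\eps t\right)\right]
\]
with $B$ an independent Brownian motion. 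The target is to show that $\beta_\eps^{-1}(\log Z_\eps - \E[\log Z_\eps])$, tested against $g$, converges in distribution to a centred Gaussian of variance $\nu_{\mathrm{eff}}^2\, \sigma^2(g,g)$, where $\sigma^2(g,g) := \iint g(x) g(x') \int_0^t G_{2(t-s)}(x-x')\,ds\,dx\,dx'$ is the 2D heat quadratic form realized by the Edwards--Wilkinson limit.

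Next, I would expand $Z_\eps - 1$ in Wiener chaoses, $Z_\eps = 1 + \sum_{n\ge 1} \beta_\eps^n J_n^\eps$, where $J_n^\eps$ is an $n$-fold iterated integral of $\dot W_\eps$ with kernel given by a product of $n$ heat semigroup factors. The first chaos $\beta_\eps J_1^\eps$ tested against $g$ is exactly Gaussian with variance converging to $\beta_\eps^2 \sigma^2(g,g)$. For $n \ge 2$, a direct moment computation gives
\[
\Var\!\left(\beta_\eps^{n-1}\int J_n^\eps(t,\cdot)\, g\,dx\right) \; \longrightarrow \; \left(\tfrac{\beta^2}{2\pi}\right)^{n-1}\sigma^2(g,g) \quad \text{as } \eps\to 0,
\]
since each additional stochastic integral introduces a nested time integral of the 2D heat kernel at zero, $G_{2u}(0) = 1/(4\pi u)$, which once regularized at scale $\eps^2$ by the mollifier $R$ contributes a factor $|\log\eps|/(2\pi)$ that exactly cancels the extra $\beta_\eps^2 = \beta^2/|\log\eps|$ prefactor. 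Summing over $n$ and using orthogonality of the chaoses, the total variance of $\beta_\eps^{-1}(Z_\eps - \E Z_\eps)$ converges to the geometric series $\sum_{k\ge 0}(\beta^2/(2\pi))^k \, \sigma^2(g,g) = \frac{2\pi}{2\pi-\beta^2}\,\sigma^2(g,g) = \nu_{\mathrm{eff}}^2\,\sigma^2(g,g)$. Gaussianity of the limit would then follow from the Nualart--Peccati fourth-moment / contraction criterion applied chaos by chaos, together with a uniform $L^2$ bound on the tail of the expansion.

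To transfer fluctuations back from $Z_\eps$ to $h_\eps$, I would use the Taylor expansion $\log(1+u) = u - \tfrac12 u^2 + O(u^3)$ at $u = Z_\eps - 1$. In the subcritical regime $\beta < \beta_0 \le \sqrt{2\pi}$, uniform polymer moment bounds on $Z_\eps$ (obtained from the Feynman--Kac representation and the standard 2D Brownian intersection local time estimate) should yield $\|Z_\eps - 1\|_p = O(\beta_\eps)$ for $p \geq 2$, so each non-linear term in the expansion of $\beta_\eps^{-1}\log Z_\eps$ is $O(\beta_\eps)$ with vanishing variance when tested against $g$. Hence $h_\eps - \E h_\eps = \beta_\eps^{-1}(Z_\eps - \E Z_\eps) + o(1)$ in $L^2$ tested against $g$, and the Gaussian limit transfers.

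The main obstacle I expect is obtaining the chaos estimates uniformly in $\eps$: each order $n$ carries sub-leading corrections of relative size $1/|\log\eps|$ to its variance, and these errors must be kept summable across the infinite geometric expansion while simultaneously handling the small-scale mollification $\eps^{-2}R(\cdot/\eps)$ in place of the formal $\delta$ at coincidence times. Technically this requires sharp estimates on nested products of 2D heat kernels restricted to collision-time regions of width $\eps^2$, combined with polymer moment bounds on $Z_\eps$ that degenerate as $\beta \uparrow \sqrt{2\pi}$; the strict inequality $\beta < \beta_0$ is precisely what keeps the resulting geometric tails summable uniformly in $\eps$ and ensures the subleading errors in the chaos expansion do not accumulate.
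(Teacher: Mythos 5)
The first half of your proposal (chaos expansion of $Z_\eps-1$ tested against $g$, variance computed as a geometric series $\sum_k(\beta^2/2\pi)^k$, Gaussianity via the fourth-moment criterion) is essentially the Caravenna--Sun--Zygouras route to the Edwards--Wilkinson limit for the stochastic heat equation, which the paper itself cites and explicitly contrasts with its own Clark--Ocone/second-order-Poincar\'e argument. That part is sound as a plan for SHE.

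The gap is in the transfer from $Z_\eps$ to $h_\eps=\beta_\eps^{-1}\log Z_\eps$. You assert that $\|Z_\eps-1\|_p=O(\beta_\eps)$ for $p\geq 2$, so that the Taylor remainder $\beta_\eps^{-1}\bigl(-\tfrac12(Z_\eps-1)^2+\cdots\bigr)$ is negligible. This is false. Pointwise in $x$, the second moment calculation gives
\[
\E\bigl[(Z_\eps(t,x)-1)^2\bigr]=\E_B\Bigl[e^{\beta_\eps^2\int_0^{t/\eps^2}R(B^1_s-B^2_s)\,ds}\Bigr]-1\longrightarrow \frac{\beta^2}{2\pi-\beta^2}>0,
\]
so $Z_\eps-1$ is $O(1)$ in $L^2$, not $O(\beta_\eps)$. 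The $O(\beta_\eps)$ smallness you describe only appears after integrating against $g$, where spatial decorrelation at scale $\eps$ produces the extra decay; it is a CLT-type cancellation, not a bound on $Z_\eps-1$ itself. Consequently the second Taylor term $\beta_\eps^{-1}(Z_\eps-1)^2$ is of order $\beta_\eps^{-1}\sim\sqrt{|\log\eps|}$ pointwise, and even after centering and testing against $g$ you would need a separate, nontrivial variance estimate for $\int(Z_\eps-1)^2 g$ showing decay of order $o(\beta_\eps^2)$---a four-point function computation that you do not address. The paper flags exactly this danger: it notes in Section \ref{s.comparison} that a naive Taylor expansion of $\mathfrak{f}(Z(\tfrac{t}{\eps^2},\tfrac{x}{\eps}))$ does not work for general smooth $\mathfrak{f}$ (citing \cite[Theorem 1.2]{dunlap2018kpz}), and instead handles the nonlinearity exactly through the Clark--Ocone representation $D\log Z_\eps = DZ_\eps/Z_\eps$, with the factor $Z_\eps^{-1}$ controlled by negative moment bounds (Proposition \ref{p.negative}) and the conditional expectation decoupled via a time-scale decomposition $I_{1,\eps}+I_{2,\eps}+I_{3,\eps}$. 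Your proposal misses this mechanism entirely, and the linearization step as written would not close.
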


There is a lot of activities on the study of singular SPDEs over the past decade. We refer to the reviews \cite{corwin2012kardar,corwinshen,quastel2015one} and the references therein. For the KPZ equation, progresses in $d\geq3$ can be found in \cite{dunlap2018kpz,magnen2017diffusive}, where results similar to Theorem~\ref{t.mainth} were proved. In two dimensions, the tightness of $\{h_\eps\}_{\eps\in(0,1)}$, as a sequence of random distributions, was proved in the recent work of Chatterjee-Dunlap \cite{chatterjee2018constructing}. To prove Theorem~\ref{t.mainth}, we implement the same strategy laid out in \cite{dunlap2018kpz}. 

The convergence in \eqref{e.mainre} is expected to hold for all $\beta\in(0,\sqrt{2\pi})$, and our proof seems to only work for $\beta$ small enough. Near the completion of this paper, we learnt the very recent work of Caravenna-Sun-Zygouras \cite{CRS18} which proved Theorem~\ref{t.mainth} for all $\beta\in(0,\sqrt{2\pi})$, using a different method. While their result is more general and covers the entire ``subcritical'' regime, the proof presented here seems to be simpler and offers another perspective. We discuss the approaches of \cite{CRS18,chatterjee2018constructing} in Section~\ref{s.comparison}.

At the critical value $\beta=\sqrt{2\pi}$, the early work of Bertini-Cancrini \cite{bertini1998two} identified the limiting covariance function of the corresponding stochastic heat equation. While the limiting distribution remains an open question, we refer to the work of \cite{CSZ18,feng2016rescaled} in this direction.

If we write the nonlinear term in \eqref{e.maineq1} as $|\nabla h_\eps|^2=\nabla h_\eps \cdot \Lambda\nabla h_\eps$, our case corresponds to $\Lambda$ being the $2\times 2$ identity matrix. The so-called \emph{anisotropic} class refers to the case of $\mathrm{det}[\Lambda]<0$. It is a very interesting question to study the asymptotics of the anisotropic version of  \eqref{e.maineq1}, where the Hopf-Cole transformation to the stochastic heat equation is unavailable and all existing methods break down. Some recent work on the interacting particle systems belonging to this class can be found in \cite{corwin1,corwin2,chen,ton}.

While we always view $\{h_\eps\}_{\eps>0}$ as a family of random distributions in $d\geq2$ and study the asymptotics of $\int h_\eps g$ with test function $g$, there are also recent studies on the pointwise fluctuations of $h_\eps$ (or $e^{h_\eps}$), see e.g. \cite{caravenna2015universality,CometsMukh,CometsMukh1,Cosco,mukherjee2016weak}.

\subsection{Connection to the stochastic heat equation and heuristics}\label{s.heuristics}

Through a Hopf-Cole transformation, the $h_\eps$ defined in \eqref{e.maineq1} is related to the solution of the heat equation with a weak random potential
\begin{equation}\label{e.maineq}
\partial_t u=\tfrac12\Delta u+\beta_\eps V(t,x)u, \    \  u(0,x)\equiv1,  \   \  x\in\R^2,
\end{equation}
with
\begin{equation}\label{e.defbetaeps}
\beta_\eps=\tfrac{\beta}{\sqrt{|\log\eps|}},
\end{equation}
 and 
\[
V(t,x)=\int_{\R^2} \varphi(x-y)\dot{W}(t,y)dy.
\] 
Here, the product $V(t,x)u$ in \eqref{e.maineq} 
is interpreted in the It\^o sense. Consider $u_\eps(t,x)=u(\tfrac{t}{\eps^2},\tfrac{x}{\eps})$,
which solves 
\[
\partial_t u_\eps=\tfrac12\Delta u_\eps+\tfrac{\beta_\eps}{\eps^2} V(\tfrac{t}{\eps^2},\tfrac{x}{\eps}) u_\eps.
\]
By the scaling property of the spacetime white noise and the fact that $d=2$, we have 
\begin{equation}\label{e.lawV}
\tfrac{1}{\eps^2}V(\tfrac{t}{\eps^2},\tfrac{x}{\eps}) \stackrel{\text{law}}{=}\dot{W}_\eps(t,x). %\       \mbox{ with } \dot{W}_\eps(t,x)=\frac{1}{\eps^2}\int_{\R^2} \varphi(\frac{x-y}{\eps}) \dot{W}(t,y)dy.
\end{equation}
Applying It\^o's formula yields
\[
\beta_{\eps}^{-1}(\log u_\eps -\E[\log u_\eps])\stackrel{\text{law}}{=}h_\eps-\E[h_\eps].
\] 
From now on, we will study $\log u_\eps$ rather than $h_\eps$. 

Our proof of Theorem~\ref{t.mainth} implies a similar result of $u_\eps$: for $\beta<\beta_0$,
\begin{equation}\label{e.conshe}
 \beta_\eps^{-1}\int_{\R^2}(u_\eps(t,x)-1)g(x)dx\Rightarrow \int_{\R^2}\ccH(t,x)g(x)dx, \quad \mbox{ in distribution. } %\mbox{ as } \eps\to0.
\end{equation}
This was previously proved in \cite[Theorem 2.17]{caravenna2015universality} for all $\beta\in(0,\sqrt{2\pi})$. Let us explain the mechanism behind the convergence of \eqref{e.conshe} for the stochastic heat equation (SHE) and how it relates to the convergence of the KPZ in \eqref{e.mainre}. 

First, the variance of the l.h.s. of \eqref{e.conshe} is
\begin{equation}\label{e.varshe}
\Var[\beta_\eps^{-1}\int_{\R^2}u_\eps(t,x)g(x)dx]=\beta^{-2}|\log\eps| \int_{\R^4}\mathrm{Cov}[u(\tfrac{t}{\eps^2},\tfrac{x}{\eps}),u(\tfrac{t}{\eps^2},\tfrac{y}{\eps})]g(x)g(y)dxdy.
\end{equation}
The covariance is written explicitly by the Feynman-Kac formula:
\begin{equation}\label{e.covre}
\begin{aligned}
&u(t,x)=\E_B[ e^{\beta_\eps\int_0^{t} V(t-s,x+B_s)ds-\frac12\beta_\eps^2 R(0)t}],\\
&\mathrm{Cov}[u(\tfrac{t}{\eps^2},\tfrac{x}{\eps}),u(\tfrac{t}{\eps^2},\tfrac{y}{\eps})]=\E_B\big[e^{\beta^2|\log\eps|^{-1}\int_0^{t/\eps^2} R(\tfrac{x-y}{\eps}+B^1_s-B^2_s)ds}\big]-1=F(\tfrac{t}{\eps^2},\tfrac{x-y}{\eps})-1,
\end{aligned}
\end{equation}
where $B^1,B^2$ are independent Brownian motions starting from the origin and $\E_B$ denotes the expectation with respect to the Brownian motions. Note that in the Feynman-Kac representation of $u$, we have the factor $\int_0^t V(t-s,x+B_s)ds$. While the Brownian motion $B_s$ starts from the origin, it is sometimes more convenient for us to write the integral as $\int_0^t V(s,x+B_{t-s})ds$ and view $\{x+B_{t-s}\}_{s\geq0}$ as a Brownian motion starting at $(t,x)$ and running backwards in time. The function $F$ in \eqref{e.covre} solves the deterministic PDE 
\[
\partial_t F=\Delta F+\beta^2|\log\eps|^{-1} R(x)F, \quad F(0,x)\equiv1.
\]
Similar to $u$, we have omitted the dependence of $F$ on $\eps$. The above equation can be written in the mild formulation as
\[
F(\tfrac{t}{\eps^2},\tfrac{x-y}{\eps})=1+\beta^2|\log\eps|^{-1}\int_0^{t/\eps^2}\int_{\R^2} G_{2(t/\eps^2-\ell)}(\tfrac{x-y}{\eps}-w)R(w)F(\ell,w)dwd\ell,
\]
where $G_t(x)=(2\pi t)^{-1}\exp(-|x|^2/2t)$ is the standard heat kernel. After a change of variable $\ell\mapsto \ell/\eps^2$, we have 
\begin{equation}\label{e.covare1}
\beta^{-2}|\log \eps|\,[F(\tfrac{t}{\eps^2},\tfrac{x-y}{\eps})-1]= \int_0^t \int_{\R^2}G_{2(t-\ell)}(x-y-\eps w)R(w) F(\tfrac{\ell}{\eps^2},w)dwd\ell.
\end{equation}
By the Feynman-Kac representation of $F$ in \eqref{e.covre}, we know that $F(\ell/\eps^2,w)$ measures the intersection time of the two Brownian motions during $[0,\ell/\eps^2]$. By a classical result of Kallianpur-Robbins \cite[Theorem 1]{KR},  for $\beta>0, \ell>0$ and $w\in\R^2$, the following convergence in distribution holds:
\begin{equation}\label{e.wkconKR}
|\log \eps|^{-1}\int_0^{\ell/\eps^2} R(w+B^1_s-B^2_s)ds\stackrel{\text{law}}{=}(2|\log\eps|)^{-1}\int_0^{2\ell/\eps^2} R(w+B_s)ds\Rightarrow \tfrac{1}{2\pi}\mathrm{Exp}(1),
\end{equation}
where we used the fact that $\int R=1$. Together with the uniform integrability we will establish later, this implies 
\begin{equation}\label{e.1271}
F(\tfrac{\ell}{\eps^2},w)=\E_B[ e^{\beta^2 |\log \eps|^{-1} \int_0^{\ell/\eps^2} R(w+B^1_s-B^2_s)ds}]\to \tfrac{2\pi}{2\pi-\beta^2 }=\nu_{\mathrm{eff}}^2
\end{equation}
for small $\beta$, as $\eps\to0$. Combining \eqref{e.covare1} and \eqref{e.1271}, the variance in \eqref{e.varshe} converges:
\begin{equation}\label{e.convar1210}
\begin{aligned}
\Var[\beta_\eps^{-1}\int_{\R^2}u_\eps(t,x)g(x)dx]\to& \,\nu_{\mathrm{eff}}^2\int_0^t\int_{\R^6} G_{2(t-\ell)}(x-y)R(w)g(x)g(y)dxdydwd\ell\\
&=\Var[\int_{\R^2}\ccH(t,x)g(x)dx].
\end{aligned}
\end{equation}
In the last step we used the fact that $\int R=(\int \varphi)^2=1$. While the effective variance $\nu_{\mathrm{eff}}^2$ only depends on the mollifier through the integral $\int \varphi$ in our setting, it is not the case at the critical value $\beta=\sqrt{2\pi}$ in $d=2$ or in higher dimensions $d\geq 3$, see \cite{bertini1998two,dunlap2018kpz,GRZ17}, which is very different from the subcritical setting. The above calculation and the convergence in \eqref{e.1271} interprets the effective variance $\nu_{\mathrm{eff}}^2$ in terms of the intersection of two Brownian paths.

Now we explain the origin of the Gaussianity. It is important to note that the main contribution to the integral in \eqref{e.wkconKR} comes from $s\in[0,K_\eps]$ provided that $|\log (\eps^2 K_\eps)|\ll |\log \eps|$. Actually, the heat kernel in $d=2$ satisfies that $G_t(x)\sim t^{-1}$ for $x$ near the origin, so we have
\begin{equation}\label{e.keps}
\begin{aligned}
|\log \eps|^{-1}\left(\int_0^{\ell/\eps^2}-\int_0^{K_\eps}\right) &\E_B[R(w+B^1_s-B^2_s)]ds\\
&=|\log \eps|^{-1} \int_{K_\eps}^{\ell/\eps^2} \E_B[R(w+B^1_s-B^2_s)]ds\\
&\sim  |\log \eps|^{-1} |\log (\eps^2 K_\eps)|\to0.
\end{aligned}
\end{equation}
For example, we can pick $K_\eps= \tfrac{1}{\eps^2|\log \eps|}=o(\eps^{-2})$ and replace $F(\ell/\eps^2,w)$ in \eqref{e.covare1} by $F(K_\eps,w)$ without changing the asymptotic covariance:
\begin{equation}\label{e.covare2}
\begin{aligned}
\beta^{-2}|\log \eps|\,[F(\tfrac{t}{\eps^2},\tfrac{x-y}{\eps})-1]\approx&  \int_0^t \int_{\R^2}G_{2(t-\ell)}(x-y-\eps w)R(w) F(K_\eps,w)dwd\ell.
%&=\beta^2 \int_0^{t/\eps^2} \int_{\R^2}G_{2(t/\eps^2-\ell)}(\tfrac{x-y}{\eps}- w)R(w) F_\eps(K_\eps,w)dwd\ell
\end{aligned}
\end{equation}
Recall that 
\[
\Var[\beta_\eps^{-1}\int_{\R^2}u_\eps(t,x)g(x)dx]=\int_{\R^4} \beta^{-2}|\log \eps|\,[F(\tfrac{t}{\eps^2},\tfrac{x-y}{\eps})-1]g(x)g(y)dxdy.
\]
The r.h.s. of \eqref{e.covare2} indicates the main contribution to the variance of our interested quantity, from the perspective of Brownian paths intersections. In microscopic variables, we have two Brownian paths, starting from $\tfrac{x}{\eps}$ and $\tfrac{y}{\eps}$ respectively and running backwards in time. After first ``meeting'' each other at the time $(t-\ell)/\eps^2$ for some $\ell\in(0,t)$, the two paths spend $K_\eps=o(\eps^{-2})$ amount of time ``intersecting'' before splitting again. As a result, the random  environment  involved in this ``intersection'' only consists of $\dot{W}(s,\cdot)$ with $s\in[\ell/\eps^2-o(\eps^{-2}),\ell/\eps^2]$, which induces a temporal decorrelation for  different $\ell_1\neq \ell_2\in(0,t)$ and creates the Gaussianity. Together with the variance convergence in \eqref{e.convar1210}, we have the Edwards-Wilkinson limit in \eqref{e.conshe}. The results in \cite{GRZ17} for $d\geq3$ is based on the above heuristics.

For the KPZ equation, the Gaussianity comes from a similar temporal decorrelation as discussed above (we will prove it by a different method though). The convergence of the variance 
\[
\begin{aligned}
\Var[\int_{\R^2}h_\eps(t,x)g(x)dx]&=\Var[\beta_\eps^{-1}\int_{\R^2} \log u_\eps(t,x)g(x)dx]\\
&\to\Var[\int_{\R^2}\ccH(t,x)g(x)dx]
\end{aligned}
\]
is however more involved. While we do not have a Feynman-Kac representation for $\mathrm{Cov}[\log u_\eps(t,x),\log u_\eps(t,y)]$ as \eqref{e.covre}, an application of the Clark-Ocone formula will help us express the covariance in terms of an integral of  
\begin{equation}\label{e.dlogu}
  \E[D\log u_\eps(t,z) |\F_r]=\E[u_\eps^{-1}(t,z) Du_\eps(t,z) |\F_r],  \quad  z=x,y, \quad r\leq \tfrac{t}{\eps^2}.
\end{equation}
Here $D$ is the Malliavin derivative with respect to the random noise and $\F_r$ is the filtration generated by $\{\dot{W}(s,\cdot), s\leq r\}$. The key difficulty in analyzing \eqref{e.dlogu} is to deal with the factor $u_\eps^{-1}$ and to evaluate the conditional expectation given $\F_r$. By the same discussion for \eqref{e.keps}, the random variable $u_\eps(t,\cdot)$ mainly depends on the noise $\dot{W}(s,\cdot)$ for $s\in[t/\eps^2-o(\eps^{-2}),t/\eps^2]$, so we could replace the factor $u_\eps^{-1}(t,\cdot)$ in \eqref{e.dlogu} with a small error by something that is independent of $\F_r$ for those  $r<\tfrac{t}{\eps^2}-o(\eps^{-2})$. The rest of the discussion is similar to the SHE case.

%\subsection{Comparison with the work of Caravenna-Sun-Zygouras}
%\label{s.comparison}

\subsection{Notation}
\label{sec-notation}
We use the following notation and conventions.
%Here are some notations used throughout the paper.

\begin{enumerate}
\item We use $a \les b$ for $a\leq Cb$ for some constant $C$ independent of $\eps$.

\item We use $(p,q)$ to denote the H\"older exponents $\tfrac{1}{p}+\tfrac{1}{q}=1$, and always choose $p\gg1$.

\item $G_t(x)=(2\pi t)^{-1}\exp(-|x|^2/2t)$ denotes the standard heat kernel.

\item
We let 
$H$ denote the Hilbert space $L^2(\R^{2+1})$, with norm
$\|\cdot\|_H$ and inner product $\la\cdot,\cdot\ra_H$.

%$\|\cdot \|_{\mathrm{op}}$ denotes the operator norm of operators,
%of the mapping $D^2Y_\eps: H\to H$ given by $h \mapsto \la h, D^2 Y_\eps\ra_H$, where we use  to denote the inner product in $H$. 

\item $\{B^j_t: t\geq0, j=1,\ldots\}$ is a family of standard independent $2-$dimensional Brownian motions built on another probability space $(\Sigma,\mathcal{A},\Pb_B)$.  We will use $\E_B,\Pb_B$ when taking the expectation and the probability with respect to $B$.

\item We use $\|\cdot\|_p$ to denote the $L^p$ norm of the product probability space $(\Omega\times \Sigma,\F\otimes \mathcal{A},\Pb\times \Pb_B)$ for $p\geq1$.

\item We use $d_{\mathrm{TV}}(\cdot,\cdot)$ to denote the total variation 
distance between two distributions.%, and if $X,Y$ are random variables of laws $\mu_X,\mu_Y$, we write $d_{\mathrm{TV}}(X,Y)$ for $d_{\mathrm{TV}}(\mu_X,\mu_Y)$.

\item We let $\|\cdot \|_{\mathrm{op}}$ denotes the operator norm.

\item We use $[0,t]_<^n$ to denote the $n-$dimensional simplex $\{0\leq t_1<\ldots<t_n\leq t\}$.
\item We use $\hat{f}(\xi)=\int_{\R^d}f(x)e^{-i\xi \cdot x}dx$ to denote the Fourier transform of $f$.
\end{enumerate}

\subsection*{Acknowledgments} We would like to thank Li-Cheng Tsai for his initial involvement in this project and multiple inspiring discussions. We thank Nikolaos Zygouras for some helpful discussions, and two anonymous referees for a very careful reading of the manuscript and many helpful suggestions to improve the presentation. The research is supported by NSF grant DMS-1613301/1807748/1907928 and the Center for Nonlinear Analysis of CMU.

\section{Sketch of the proof}\label{s.sketch}

The main result \eqref{e.mainre} is equivalent with the convergence in distribution of
\begin{equation}\label{e.mainre1}
\beta_{\eps}^{-1}\int_{\R^2} (\log u_\eps(t,x)-\E[\log u_\eps(t,x)])g(x)dx \Rightarrow \int_{\R^2} \ccH(t,x)g(x)dx.
\end{equation}
We rely on the Feynman-Kac representation of the solution to \eqref{e.maineq}:
\begin{equation}\label{e.fkre}
u(t,x)=\E_B\left[ e^{\beta_\eps \int_0^t V(t-s,x+B_s)ds -\frac12\beta_\eps^2R(0)t}\right],
\end{equation}
which has the same distribution, if viewed as a random field in $x$ with $t$ fixed, as 
\[
Z(t,x)=\E_B[M(t,x)], \quad \mbox{ with } \quad M(t,x)=\exp\left(\beta_\eps\int_0^t V(s,x+B_s)ds-\tfrac12\beta_\eps^2R(0)t\right).%\E_B\left[e^{\beta\int_0^t V(s,x+B_s)ds-\frac12\beta^2R(0)t}\right]=\E_B[M(t,x)]
\]
%with 
%\[
%M(t,x)=\exp\left(\beta_\eps\int_0^t V(s,x+B_s)ds-\tfrac12\beta_\eps^2R(0)t\right).
%\]
%We used the notation $Z(t,x)$ since it can be viewed as the partition function of a directed polymer of length $t$ and starting at $x$. 
We keep in mind that $M,Z$ depend on $\eps$ through the small factor $\beta_\eps$ defined in \eqref{e.defbetaeps} but omit its dependence to simplify the notation. For fixed $B$ and $x$, $M(\cdot,x)$ is a martingale. 
Defining
\[
Z_\eps(t,x)=Z(\tfrac{t}{\eps^2},\tfrac{x}{\eps}), \     \ M_\eps(t,x)=M(\tfrac{t}{\eps^2},\tfrac{x}{\eps}),
\]
and
\[
X_\eps(t)=\int_{\R^2} \log Z_\eps(t,x)g(x)dx.
\]
The convergence in \eqref{e.mainre1} is equivalent to 
\[
\beta_\eps^{-1}(X_\eps(t)-\E[X_\eps(t)])\Rightarrow \int_{\R^2}\ccH(t,x) g(x)dx.
\]
Throughout the paper, we fix the variable $t>0$ and sometimes omit its dependence. Define
\begin{equation}\label{e.defsigma}
\begin{aligned}
\sigma_t^2=\Var[\int \ccH(t,\cdot)g(\cdot)]= \nu_{\mathrm{eff}}^2 \int_0^t \int_{\R^{4}} g(x_1)g(x_2) G_{2s}(x_1-x_2)dx_1dx_2ds,
\end{aligned}
\end{equation}
where we recall that
$G_t(x)$ is the standard heat kernel. The proof of Theorem~\ref{t.mainth} consists of two steps:

\begin{proposition}\label{p.convar}
As $\eps\to0$, $\beta_\eps^{-2}\Var[X_\eps(t)]\to \sigma_t^2$.
\end{proposition}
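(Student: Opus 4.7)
The plan is to apply the Clark-Ocone formula to $X_\eps(t)$, reducing the variance to an $L^2$ estimate of conditional Malliavin derivatives, and then to identify the contribution obtained by replacing $Z_\eps^{-1}$ with $1$ as the main term, which reproduces exactly the SHE variance already analyzed in \eqref{e.convar1210}. Since $Z_\eps(t,x)$ is measurable with respect to $\F_{t/\eps^2}$, the Clark-Ocone representation reads
\begin{equation*}
X_\eps(t) - \E[X_\eps(t)] = \int_0^{t/\eps^2}\!\!\int_{\R^2}\E[\Phi_r(z)\mid \F_r]\, dW(r,z), \quad \Phi_r(z) := \int_{\R^2} D_{r,z}\log Z_\eps(t,x)\, g(x)\, dx,
\end{equation*}
where $D$ denotes the Malliavin derivative, and It\^o isometry yields
\begin{equation*}
\Var[X_\eps(t)] = \int_0^{t/\eps^2}\int_{\R^2}\E\!\left[\bigl(\E[\Phi_r(z)\mid \F_r]\bigr)^2\right] dr\, dz.
\end{equation*}
From the Feynman-Kac representation \eqref{e.fkre}, for $r\leq t/\eps^2$,
\begin{equation*}
D_{r,z}\log Z_\eps(t,x) = \beta_\eps\, Z_\eps(t,x)^{-1}\,\E_B\!\bigl[\varphi(x/\eps + B_r - z)\, M_\eps(t,x)\bigr].
\end{equation*}

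The next step is to decompose $Z_\eps(t,x)^{-1} = 1 + \bigl(Z_\eps(t,x)^{-1} - 1\bigr)$, which splits $\Phi_r = \Phi_r^{\mathrm{main}} + \Phi_r^{\mathrm{err}}$ with $\Phi_r^{\mathrm{main}}(z) := \int_{\R^2} D_{r,z} Z_\eps(t,x)\, g(x)\, dx$. Applying Clark-Ocone and It\^o isometry \emph{directly} to $Y_\eps(t) := \int_{\R^2} Z_\eps(t,x) g(x) dx$ instead of to $X_\eps(t)$, one obtains
\begin{equation*}
\int_0^{t/\eps^2}\int_{\R^2}\E\!\left[\bigl(\E[\Phi_r^{\mathrm{main}}(z)\mid \F_r]\bigr)^2\right] dr\, dz = \Var[Y_\eps(t)].
\end{equation*}
Since $Z_\eps(t,\cdot)$ has the same law as $u_\eps(t,\cdot)$, this equals $\Var[\int u_\eps(t,x) g(x) dx]$, and by \eqref{e.convar1210} its rescaling by $\beta_\eps^{-2}$ converges to $\sigma_t^2$. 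Thus the main-term contribution already gives the correct limit.

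It therefore suffices to show that the error term and cross term are $o(\beta_\eps^2)$; concretely, that
\begin{equation*}
\beta_\eps^{-2}\int_0^{t/\eps^2}\int_{\R^2}\E\!\left[\bigl(\E[\Phi_r^{\mathrm{err}}(z)\mid \F_r]\bigr)^2\right] dr\, dz \longrightarrow 0.
\end{equation*}
This is the main obstacle. The difficulty is that $Z_\eps(t,x)^{-1}-1$ has $L^p$ norm of order $\beta_\eps$ but is intertwined with the same noise driving $D_{r,z}Z_\eps(t,x)$, so its factor cannot be pulled out of the conditional expectation naively. The guiding principle from Section~\ref{s.heuristics} is to introduce a localized surrogate $\bar Z_\eps(t,x)$, measurable with respect only to noise in a window $[t/\eps^2 - K_\eps,\, t/\eps^2]$ of width $K_\eps = o(\eps^{-2})$: then for $r < t/\eps^2 - K_\eps$ the factor $\bar Z_\eps(t,x)^{-1}$ is independent of $\F_r$, so $\E[\bar Z_\eps(t,x)^{-1}]$ pulls out and the analysis reduces to a variant of the main-term computation. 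One then needs (i) an $L^p$ estimate showing $\|Z_\eps^{-1} - \bar Z_\eps^{-1}\|_p\to 0$ sufficiently fast, (ii) a Kallianpur-Robbins-type bound as in \eqref{e.keps} to handle the short residual interval $r\in[t/\eps^2-K_\eps,\,t/\eps^2]$, and (iii) uniform-in-$\eps$ $L^p$ moment bounds on $Z_\eps$ and $Z_\eps^{-1}$, which is where the smallness condition $\beta<\beta_0$ enters. Carrying out these estimates without losing factors of $|\log\eps|$ is the technical heart of the argument.
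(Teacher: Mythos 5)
Your plan hinges on the decomposition $Z_\eps^{-1}=1+(Z_\eps^{-1}-1)$ and on the assertion that $\|Z_\eps(t,x)^{-1}-1\|_p$ is of order $\beta_\eps$; this assertion is false. Since $Z_\eps(t,x)=Z(\tfrac{t}{\eps^2},\tfrac{x}{\eps})$, the second moment calculation gives $\E[Z_\eps(t,x)^2]=\E_B[e^{\beta_\eps^2\int_0^{t/\eps^2}R(B_{2s})ds}]\to\tfrac{2\pi}{2\pi-\beta^2}=\nu_{\mathrm{eff}}^2>1$ by \eqref{e.1271}, so $\|Z_\eps(t,x)-1\|_2\to\sqrt{\nu_{\mathrm{eff}}^2-1}$, which is of order $\beta$ --- a constant --- and does not vanish as $\eps\to0$. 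Consequently your error term in the It\^o isometry is of the same order $O(\beta_\eps^2)$ as the main term, and the cross term cannot be controlled simply by Cauchy--Schwarz against a vanishing quantity. The clean identification ``main term $=\Var[Y_\eps]$'' is correct as far as it goes, but it does not isolate the limit: the remainder $\Var[X_\eps]-\Var[Y_\eps]$ is $o(\beta_\eps^2)$ only after a genuinely different argument that your decomposition does not supply.

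The paper's decomposition is not a perturbation around $Z_\eps^{-1}\approx1$. Instead, for the part of the Clark--Ocone integral with $s\geq K_\eps$, the denominator $Z(\tfrac{t}{\eps^2},\tfrac{x}{\eps})^{-1}$ is replaced by $Z(K_\eps,\tfrac{x}{\eps})^{-1}$, with $K_\eps=\tfrac{1}{\eps^2|\log\eps|^\alpha}$. This is a small replacement in the sense that $\|Z(\tfrac{t}{\eps^2},0)-Z(K_\eps,0)\|_2\les|\log\eps|^{-\frac12+\delta}$ (Lemma~\ref{l.292}), which is the right small parameter --- not $\|Z_\eps-1\|_p$. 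The crucial structural point, which you gesture at but do not use, is that $Z(K_\eps,\tfrac{x}{\eps})$ is $\F_s$-measurable for $s\geq K_\eps$, so it can be pulled outside the conditional expectation in \eqref{e.defI3} to give \eqref{e.defI3new}. The factor $Z(K_\eps,\tfrac{x}{\eps})^{-1}$ is \emph{not} then discarded: it survives into the main term $I_{3,\eps}$ as a change of measure $\hat{\E}_{x,y}$, and the limit $\nu_{\mathrm{eff}}^2$ emerges from the Kallianpur--Robbins convergence of the quantity $\cH_\eps$ under $\hat{\E}$ in Lemmas~\ref{l.i3re}--\ref{l.i3lim}. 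Thus the main term is not literally $\Var[Y_\eps]$; it only recovers $\sigma_t^2$ after the $\hat{\E}$-analysis, and the smallness condition on $\beta$ enters through both the negative-moment bounds (Proposition~\ref{p.negative}) and the exponential-moment bounds (Lemma~\ref{l.exmm}) that make the $\hat{\E}$-expectations uniformly integrable. To repair your argument you would need to replace the decomposition around $1$ by the decomposition around $Z(K_\eps,\cdot)^{-1}$ from the start, and then carry the remaining $Z(K_\eps,\cdot)^{-1}$ weight through the limit computation rather than expecting it to disappear.
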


\begin{proposition}\label{p.gauss}
As $\eps\to0$,
\[
\frac{X_\eps(t)-\E[X_\eps(t)]}{\sqrt{\Var[X_\eps(t)]}}\Rightarrow N(0,1).
\]
\end{proposition}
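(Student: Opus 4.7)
The plan is to use the Clark–Ocone formula to write $X_\eps(t) - \E[X_\eps(t)]$ as a continuous $\F_r$-martingale and then invoke the martingale central limit theorem. Concretely, I would set
\[
X_\eps(t) - \E[X_\eps(t)] = \int_0^{t/\eps^2}\!\!\int_{\R^2} \Phi_\eps(r,y)\, W(dr,dy), \qquad \Phi_\eps(r,y) := \E[D_{r,y} X_\eps(t) \mid \F_r],
\]
so that the predictable bracket at time $t/\eps^2$ equals $\|\Phi_\eps\|_H^2$. Since the driving martingale is continuous, the Lindeberg condition is automatic, and it suffices to show
\[
\beta_\eps^{-2}\|\Phi_\eps\|_H^2 \longrightarrow \sigma_t^2 \qquad \text{in probability as } \eps\to 0.
\]

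Bringing the Malliavin derivative inside the spatial integral,
\[
\Phi_\eps(r,y) = \int_{\R^2} g(x)\, \E\big[Z_\eps^{-1}(t,x)\, D_{r,y} Z_\eps(t,x) \,\big|\, \F_r\big]\,dx,
\]
and $D_{r,y} Z_\eps(t,x)$ is computed explicitly by differentiating the Feynman–Kac representation of $Z_\eps$. To identify the limit, I would split the time integration at $r_\star := t/\eps^2 - K_\eps$ with $K_\eps = \eps^{-2}/|\log\eps|$, following the heuristic in Section~\ref{s.heuristics}. On the short interval $(r_\star, t/\eps^2]$, the contribution to $\beta_\eps^{-2}\|\Phi_\eps\|_H^2$ is $O(|\log(\eps^2 K_\eps)|/|\log\eps|)\to 0$ by the same mechanism underlying \eqref{e.keps}. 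On the long interval $[0, r_\star]$, the quantity $Z_\eps(t,x)$ depends mostly on noise produced after $r_\star$, so $Z_\eps^{-1}(t,x)$ may be replaced up to small error by a random variable nearly independent of $\F_r$; this makes the conditional expectation factorise and reduces the limit of $\beta_\eps^{-2}\|\Phi_\eps\|_H^2$ to a deterministic integral which, by a computation parallel to the one producing \eqref{e.convar1210}, equals exactly $\sigma_t^2$.

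The main obstacle will be quantifying this temporal decorrelation in the presence of the inverse factor $Z_\eps^{-1}$. One needs uniform negative-moment bounds on $Z_\eps(t,x)$ and a quantitative approximation of $Z_\eps^{-1}(t,x)$ by an $\F_{r_\star}$-measurable proxy, with total error small even after multiplication by $|\log\eps|$. This is where the smallness of $\beta$ should enter: for $\beta<\beta_0$ the Feynman–Kac exponent remains well-controlled, and one can expect hypercontractivity or chaos-expansion estimates analogous to those used for Proposition~\ref{p.convar} to suffice. Once convergence of the bracket in probability is established, the continuous martingale CLT closes the argument.
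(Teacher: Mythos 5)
Your strategy—Clark--Ocone followed by the martingale central limit theorem—is a genuinely different route from the paper, which instead applies the second order Poincar\'e inequality of Nourdin--Peccati--Reinert and reduces the task to bounding $\E[\|DX_\eps\|_H^4]$ and $\E[\|D^2X_\eps\|_{\mathrm{op}}^4]$. The paper itself notes, at the end of Section~\ref{s.sketch}, that the martingale/temporal-mixing route you propose was carried out in $d\geq3$ in \cite{GRZ17} and could in principle work here, so your instinct is sound. The reduction you state is also correct: for the continuous martingale $u\mapsto\int_0^u\int_{\R^2}\E[D_{r,y}X_\eps|\F_r]\,dW(r,y)$ the only thing to check is that the terminal bracket $\|\Phi_\eps\|_H^2=\int_0^{t/\eps^2}\int_{\R^2}|\E[D_{r,y}X_\eps|\F_r]|^2\,dy\,dr$ converges \emph{in probability}, after rescaling by $\beta_\eps^{-2}$, to $\sigma_t^2$.

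However, there is a genuine gap in how you propose to establish that convergence. Replacing $Z_\eps^{-1}(t,x)$ by an almost $\F_r$-independent proxy, as you suggest, controls the error introduced by the inverse, but it does not make the integrand of the bracket deterministic: even after the replacement one is left with $\E[D_{r,y}Z_\eps(t,x)\mid\F_r]=\beta_\eps\,\E_B[M(r,\tfrac{x}{\eps})\Phi^\eps_{t,x,B}(r,y)]$, which is a nontrivial $\F_r$-measurable random field, so $\|\Phi_\eps\|_H^2$ is genuinely random. The computation you outline (and which the paper carries out in Section~\ref{s.convar} via Lemmas~\ref{l.i1}--\ref{l.i3lim}) gives only the convergence of $\E[\|\Phi_\eps\|_H^2]$, i.e.\ Proposition~\ref{p.convar}; this is not the same as convergence in probability. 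Closing that gap requires showing $\Var[\beta_\eps^{-2}\|\Phi_\eps\|_H^2]\to0$, which is a four-point (eight-Brownian-motion) estimate of the same flavor and difficulty as the bounds on $\E[\|DX_\eps\|_H^4]$ and $\E[\|\cP_k\otimes_1\cP_k\|_{H\otimes H}^2]$ in Lemmas~\ref{l.1stde}, \ref{l.p1}, \ref{l.p2}. In other words, the part of the argument you wave at as a ``computation parallel to \eqref{e.convar1210}'' is precisely where the real work lies; negative moments (Proposition~\ref{p.negative}) plus the first-order variance computation are not enough, and some fourth-moment input—whether via the explicit Feynman--Kac estimates the paper uses or via chaos/hypercontractivity—must be supplied before the martingale CLT can be invoked.
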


%We begin with a technical estimate capturing the fact that $Z_\eps$ can't be
%too small.
%In the next subsections, we describe the main steps in the proof.
\subsection{Negative moments}

Throughout the paper, we rely on the existence of negative moments of $Z_\eps(t,x)$ for small $\beta$, which essentially comes from \cite{hu2018asymptotics} and was also presented in \cite[Equation (5.13)]{CRS18}.

\begin{proposition}\label{p.negative}
There exits $\beta_0>0$ such that if $\beta<\beta_0$, 
\[
\sup_{t\in[0,T]}\sup_{\eps\in(0,1)}\E[Z_\eps(t,x)^{-n}] \leq C_{\beta,n,T}.
\]
\end{proposition}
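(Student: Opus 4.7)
The immediate issue is that the naive Jensen bound
\[
\E[Z_\eps(t,x)^{-n}]\leq\E\!\left[\E_B[M_\eps(t,x)^{-n}]\right]=\exp\!\Bigl(\tfrac{n(n+1)}{2}\beta_\eps^2R(0)\tfrac{t}{\eps^2}\Bigr)
\]
blows up as $\eps\to 0$, so one must genuinely exploit the smoothing effect of averaging $M_\eps$ over $B$ together with the smallness of $\beta$. The plan is to show that $F_\eps:=\log Z_\eps(t,x)$ is sub-Gaussian with parameters bounded uniformly in $\eps$; since $Z_\eps=e^{F_\eps}$, this delivers all negative moments in one shot.

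The concentration will be driven by a clean Malliavin-derivative identity for $F_\eps$. Applying the chain rule to $F_\eps=\log\E_B[M_\eps]$ gives
\[
D_{s,y}F_\eps=\beta_\eps\,\E_{\hat M}\!\left[\varphi(x/\eps+B_s-y)\right]\1_{s\leq t/\eps^2},\qquad d\hat M:=\frac{M_\eps(t,x;B)}{Z_\eps(t,x)}\,d\Pb_B,
\]
and using an independent replica $B'$ together with $\int\varphi(z-y)\varphi(z'-y)\,dy=R(z-z')$,
\[
\|DF_\eps\|_H^2=\beta_\eps^2\,\E_{\hat M\otimes\hat M'}\!\left[\int_0^{t/\eps^2}R(B_s-B'_s)\,ds\right].
\]
Under the \emph{untilted} Brownian measure, the Kallianpur--Robbins asymptotics (the same mechanism behind \eqref{e.wkconKR}) give $\beta_\eps^2\,\E_B\E_{B'}[\int_0^{t/\eps^2}R\,ds]\lesssim\beta^2$ uniformly in $\eps$, so the task reduces to showing that the polymer tilt $M_\eps M'_\eps/Z_\eps^2$ does not destroy this boundedness when $\beta$ is small. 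Once $\|DF_\eps\|_H^2$ is controlled in a suitable $L^p$ sense, the Clark--Ocone representation of $F_\eps-\E[F_\eps]$ combined with Burkholder--Davis--Gundy (or directly Borell's Gaussian concentration) produces sub-Gaussian tails for $F_\eps-\E[F_\eps]$; combined with $|\E[F_\eps]|\lesssim 1$ (immediate from $\E[Z_\eps]=1$, Jensen, and the concentration estimate) this yields the desired uniform bound on $\E[e^{-nF_\eps}]=\E[Z_\eps^{-n}]$.

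The hard part is precisely the circularity in the Malliavin bound: controlling $\|DF_\eps\|_H^2$ in expectation requires bounding $M_\eps M'_\eps/Z_\eps^2$, which already involves a negative moment of $Z_\eps$. I would break this by first working with a truncation $Z_\eps\vee\delta$, using the uniform positive moment bounds $\E[Z_\eps^p]\lesssim 1$ (available for small $\beta$ via the replica formula and Kallianpur--Robbins) together with smallness of $\beta$ to close a bootstrap uniformly in $\delta$, then passing $\delta\downarrow 0$ by monotone convergence.
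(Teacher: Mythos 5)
Your overall strategy---sub-Gaussian concentration of $\log Z_\eps$ via Gaussian tools, then reading off negative moments from the tail---is the right shape, and the paper's Appendix~\ref{s.nemm} (adapted from Hu--L\^e) is also a Gaussian concentration argument. But the specific implementation you sketch has a genuine gap at exactly the step you flag as hard, and the truncation you propose does not resolve it. In your Malliavin identity the numerator $\E_B[M_\eps M'_\eps\int_0^{t/\eps^2} R(B_s-B'_s)\,ds]$ does have uniformly bounded positive moments for small $\beta$, but the $Z_\eps^{-2}$ in the denominator is precisely the quantity you are trying to control. Truncating to $Z_\eps\vee\delta$ gives $\|DF_\eps^\delta\|_H^2\les \delta^{-2}\cdot(\text{bounded})$, so Borell/Clark--Ocone produce sub-Gaussian tails for $F_\eps^\delta$ with variance proxy of order $\delta^{-2}$, hence $\E[e^{-nF_\eps^\delta}]\les e^{cn^2\delta^{-2}}$; this blows up as $\delta\downarrow0$, and there is no mechanism in the bootstrap by which an improved negative-moment bound feeds back to remove the $\delta^{-2}$ loss. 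So the ``pass $\delta\downarrow0$ by monotone convergence'' step does not close.

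The paper avoids this by localizing in the \emph{noise}, not in $Z_\eps$. One defines a good set
$A_\lambda=\{\dot{W}:\cU_\eps(\dot W)>\tfrac12,\ \beta_\eps^2\int_0^{t/\eps^2}\E_B^{\dot W}[R(B^1_s-B^2_s)]\,ds\leq\lambda\}$, whose second condition is exactly the boundedness of your $\|DF_\eps\|_H^2$, but imposed only on $A_\lambda$. Paley--Zygmund plus a Kallianpur--Robbins-type second-moment bound (Lemma~\ref{l.intersection}) gives $\Pb[A_\lambda]\geq c>0$ uniformly in $\eps,\delta$ once $\lambda$ is large and $\beta$ is small. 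The crucial asymmetry is then: for \emph{any} noise $\dot W$ and any $\dot{\W}\in A_\lambda$,
\[
\cU_\eps(\dot W)\geq\tfrac12\,e^{-\sqrt{\lambda}\,\|\dot W-\dot\W\|_{L^2}},
\]
where the Cauchy--Schwarz step uses the tilted expectation at the \emph{good} point $\dot\W$ (where the tilted intersection time is $\leq\lambda$), not at the possibly bad $\dot W$. The Gaussian isoperimetric inequality then turns $\Pb[A_\lambda]\geq c$ into a Gaussian tail on $\mathrm{dist}(\dot W,A_\lambda)$, and hence a uniform small-ball bound $\Pb[\cU_\eps\leq r]\les\exp(-c(\log r^{-1})^2)$. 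This one-sided ``Lipschitz bound anchored at good noise'' is the ingredient your proposal is missing; it breaks the circularity without ever needing an $L^p$ bound on $\|DF_\eps\|_H^2$.
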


The proof is presented in Appendix~\ref{s.nemm}.

\subsection{The Clark-Ocone representation}

For each realization of the Brownian motion $B$, we can write 
\[
\begin{aligned}
\int_0^{t/\eps^2} V(s,\tfrac{x}{\eps}+B_s)ds=&\int_0^{t/\eps^2} \left(\int_{\R^2} \varphi(\tfrac{x}{\eps}+B_s-y)\dot{W}(s,y)dy\right)ds\\
%=&\int_{\R^d}\left(\int_0^{t/\eps^2} \varphi(\frac{x}{\eps}+B_s-y)ds\right) \dot{W}(s,y)dy\\
=&\int_{\R^{3}} \Phi_{t,x,B}^\eps(s,y)dW(s,y),
\end{aligned}
\]
with 
\[
\Phi_{t,x,B}^\eps(s,y)=\1_{[0,t/\eps^2]}(s) \varphi(\tfrac{x}{\eps}+B_s-y).
\]
Therefore, 
\[
\begin{aligned}
D_{s,y} Z_\eps(t,x)=D_{s,y} \E_B[M_\eps(t,x)]%\left[ \exp\left(\beta \int_{\R^{d+1}}\Phi_{t,x,B}^\eps(s,y)dW(s,y)-\frac{\beta^2 R(0)t}{2\eps^2}\right)\right]\\
= \beta_\eps \E_B\left[ M_\eps(t,x)\Phi_{t,x,B}^\eps(s,y)\right],
\end{aligned}
\]
where $D_{s,y}$ denotes the Malliavin derivative operator with respect to $\dot{W}$. By \cite[Lemma A.1]{dunlap2018kpz}, we have 
\[
D_{s,y}\log Z_\eps(t,x)= \frac{D_{s,y} Z_\eps(t,x)}{Z_\eps(t,x)},
\]
and the Clark-Ocone formula says
\begin{eqnarray}
\nonumber
X_\eps-\E[X_\eps]&=&\int_{\R^{3}} \E[D_{s,y} X_\eps |\F_s] dW(s,y)\\
  \label{eq-star1}
&=&\int_{\R^{3}} \E\big[ \int_{\R^2} \frac{D_{s,y}Z_\eps(t,x)}{Z_\eps(t,x)}g(x)dx \big| \F_s\big]dW(s,y)\\
&=&\beta_\eps\int_0^{t/\eps^2}\int_{\R^2}\left(\int_{\R^2} g(x)\E\big[ \frac{\E_B[ M_\eps(t,x)\Phi^\eps_{t,x,B}(s,y)]}{Z_\eps(t,x)} \big| \F_s\big]dx \right) dW(s,y).
%=&\int_{\R^d}\left( \int_{\R^{d+1}} \E\left[ \frac{D_{s,y}Z_\eps(t,x)}{Z_\eps(t,x)} \bigg| \F_s\right] dW(s,y)\right) g(x)dx
\nonumber
\end{eqnarray}
Here $\F_s$ is the filtration generated by $\dot{W}(\ell, \cdot)$ up to $\ell\leq s$.

For \begin{equation}\label{e.defKeps}
K_\eps=\tfrac{1}{\eps^2|\log\eps|^\alpha}
\end{equation} with some $\alpha>0$ to be determined, we decompose the  stochastic integral in \eqref{eq-star1}
into three parts:
\[
\beta_\eps^{-1}(X_\eps-\E[X_\eps])=I_{1,\eps}+I_{2,\eps}+I_{3,\eps},
\]
with 
\begin{equation}\label{e.defI1}
I_{1,\eps}=\int_0^{K_\eps}\int_{\R^2}\left(\int_{\R^2} g(x)\E\big[ \frac{\E_B[ M_\eps(t,x)\Phi^\eps_{t,x,B}(s,y)]}{Z_\eps(t,x)} \big| \F_s\big]dx \right) dW(s,y),
\end{equation}
\begin{equation}\label{e.defI2}
\begin{aligned}
I_{2,\eps}=\int_{K_\eps}^{t/\eps^2} \int_{\R^2}\left(\int_{\R^2} g(x)\E\big[ \frac{\E_B[ M_\eps(t,x)\Phi^\eps_{t,x,B}(s,y)]}{Z(K_\eps,x/\eps)}\left(\frac{Z(K_\eps,x/\eps)}{Z(t/\eps^2,x/\eps)}-1\right) \big| \F_s\big]dx \right) dW(s,y),
%I_{2,\eps}=\int_{K_\eps}^{t/\eps^2} \int_{\R^2}\left(\int_{\R^2} g(x)\E\big[ \E_B[ M_\eps(t,x)\Phi^\eps_{t,x,B}(s,y)]\left(\frac{1}{Z(t/\eps^2,x/\eps)}-\frac{1}{Z(K_\eps,x/\eps)}\right) \big| \F_s\big]dx \right) dW(s,y),
\end{aligned}
\end{equation}
and
\begin{equation}
  \label{e.defI3}
I_{3,\eps}=\int_{K_\eps}^{t/\eps^2} \int_{\R^2}\left(\int_{\R^2} g(x)\E\big[ \frac{\E_B[ M_\eps(t,x)\Phi^\eps_{t,x,B}(s,y)]}{Z(K_\eps,x/\eps)}\big| \F_s\big]dx \right) dW(s,y).
\end{equation}
Since $1\ll K_\eps\ll \eps^{-2}$ and we expect that $Z(K_\eps,x/\eps)$ is close to $Z(t/\eps^2,x/\eps)$, 
 the contribution from $I_{1,\eps},I_{2,\eps}$ is small compared to that from $I_{3,\eps}$. For $I_{3,\eps}$, the integration is in $s\geq K_\eps$, so the random variable $Z(K_\eps,x/\eps)$ is $\F_s-$measurable, and 
\begin{equation}\label{e.defI3new}
I_{3,\eps}=\int_{K_\eps}^{t/\eps^2} \int_{\R^2}\left(\int_{\R^2} \frac{g(x)}{Z(K_\eps,x/\eps)}\E\big[ \E_B[ M_\eps(t,x)\Phi^\eps_{t,x,B}(s,y)]\big| \F_s\big]dx \right) dW(s,y).
\end{equation}
Note that the procedure we took here is slightly different from the heuristics provided in Section~\ref{s.heuristics} due to the time reversal and the fact that we considered $Z(t,x)$ rather than $u(t,x)$. Mathematically they are equivalent.

\subsection{The second order Poincar\'e inequality}

To simplify the notation, we define 
\[
Y_\eps=\tfrac{X_\eps-\E[X_\eps]}{\sqrt{\Var[X_\eps]}}.
\] To show that 
$Y_\eps\Rightarrow N(0,1)$, we apply the second order Poincar\'e inequality, which was originally proved in the discrete setting in \cite{chatterjee2009fluctuations} and generalized to the continuous setting in \cite{nourdin2009second}. Since $\E[Y_\eps]=0$ and $\Var[Y_\eps]=1$, 
with $\zeta$ a standard centered Gaussian random variable, by \cite[Theorem 1.1]{nourdin2009second}, we have
%\sim N(0,1)$, we have
\begin{equation}\label{e.2ndpoincare}
d_{\mathrm{TV}}(Y_\eps,\zeta) \les \E[\|DY_\eps\|_H^4]^{1/4} \E[\|D^2Y_\eps\|_{\mathrm{op}}^4]^{1/4},
\end{equation}
where we recall $H=L^2(\R^{2+1})$ and $\|D^2Y_\eps\|_{\mathrm{op}}$ denotes the operator norm of the mapping $H\otimes H\ni D^2Y_\eps: H \to H$ defined as $D^2 Y_\eps h:=\la h,D^2 Y_\eps\ra_H$, i.e., for any $h\in H$, we have $[D^2 Y_\eps h](s,y)=\int_{\R^{2+1}} h(s',y')D_{s',y'}D_{s,y}Y_\eps ds'dy'$.
%(Recall our notation convention from 
%Section \ref{sec-notation}; here,
%%Here $d_{\mathrm{TV}}$ denotes the total variation distance between two distributions, $H$ denotes the Hilbert space $L^2(\R^{d+1})$, $\|\cdot\|_H$ denotes the Hilbert norm, and 
%$\|D^2Y_\eps \|_{\mathrm{op}}$ denotes the operator norm of the 
%mapping $D^2Y_\eps: H\to H$ given by $h \mapsto  D^2 Y_\eps h$.)
%, where we use $\la\cdot,\cdot\ra_H$ to denote the inner product in $H$. 

Since
\[
DY_\eps=\tfrac{DX_\eps}{\sqrt{\Var[X_\eps]}}, \    \ D^2 Y_\eps=\tfrac{ D^2X_\eps}{\sqrt{\Var[X_\eps]}},
\]
and $\Var[X_\eps]\sim |\log\eps|^{-1}$ from Proposition~\ref{p.convar}, 
 to show $d_{\mathrm{TV}}(Y_\eps,\zeta)\to0$ using \eqref{e.2ndpoincare}, we only need to prove
\begin{equation}\label{e.sept41}
\E[\|DX_\eps\|_H^4]^{1/4} \E[\|D^2X_\eps\|_{\mathrm{op}}^4]^{1/4}=o(|\log\eps|^{-1}),  \mbox{ as } \eps\to0.
\end{equation}

Another possible way to prove the Gaussianity is to utilize the fast temporal mixing, as explained heuristically in Section~\ref{s.heuristics} and implemented in $d\geq3$ in \cite{GRZ17}.

\subsection{Discussions and remarks}
\label{s.comparison}

\subsubsection*{A comparison with \cite{CRS18} and \cite{chatterjee2018constructing}}
The basic ideas behind our approach and the one in \cite{CRS18} are similar, and the key is to modify the partition function so that $\log Z(\tfrac{t}{\eps^2},\tfrac{x}{\eps})$ can be ``linearized'' in some sense. As we will prove later, $I_{3,\eps}$ is the main contribution to the random fluctuations, which essentially corresponds to the partition function of a directed polymer $\{B_s\}_{s\geq0}$ that interacts only with the random environment $\dot{W}(s,\cdot)$ in $s\geq K_\eps$. The initial layer in $s< K_\eps$ only determines the starting point $B_{K_\eps}$ for this interaction. By our choice of $K_\eps=o(\eps^{-2})$, it is easy to show that in the weak disorder regime ($\beta$ small), the polymer behaves like the Brownian motion so $\eps B_{K_\eps} \to0$ as $\eps\to0$, which indicates that the initial layer plays no role in the limit. Given this heuristics, if we ignore the factor $Z(K_\eps,x/\eps)^{-1}$ in \eqref{e.defI3new}, then $I_{3,\eps}$ becomes 
\[
\begin{aligned}
I_{3,\eps}\mapsto &\int_{K_\eps}^{t/\eps^2} \int_{\R^2}\left(\int_{\R^2} g(x)\E\big[ \E_B[ M_\eps(t,x)\Phi^\eps_{t,x,B}(s,y)]\big| \F_s\big]dx \right) dW(s,y)\\
%&=\int_{K_\eps}^{t/\eps^2} \int_{\R^2}\left(\int_{\R^2} g(x)\E_B\big[  M(s,\tfrac{x}{\eps})\Phi^\eps_{t,x,B}(s,y)\big]dx \right) dW(s,y)\\
&= \int_{\R^2} g(x) \left(\int_{K_\eps}^{t/\eps^2} \int_{\R^2} \E[ D_{s,y}Z(\tfrac{t}{\eps^2},\tfrac{x}{\eps}) |\F_s]dW(s,y)\right)dx.
\end{aligned}
\]
The last expression precisely describes the fluctuation of the partition function that only involves the environment in $s\geq K_\eps$. A similar heuristics was given in \cite[Section 2.1]{CRS18}, and the $Z^{B\geq}_{N,\beta_N}(x)-1$ defined in \cite[Equation (2.11)]{CRS18} corresponds to the above expression. The Clark-Ocone formula seems to be particularly handy for this ``linearization''. We also note a naive Taylor expansion does not necessarily work for $\mathfrak{f}(Z(\tfrac{t}{\eps^2},\tfrac{x}{\eps}))$ with arbitrary smooth $\mathfrak{f}$, as shown in \cite[Theorem 1.2]{dunlap2018kpz}.% and also pointed out in \cite{CRS18}.

Another difference between the two approaches is the proof of the Gaussianity. After the ``linearization'' in \cite[Proposition 2.3]{CRS18}, the convergence to the Edwards-Wilkinson limit follows from the convergence of SHE proved in \cite[Theorem 2.17]{caravenna2015universality}, which was based on a polynomial chaos expansion and the fourth moment theorem \cite{nourdin2010,nualart05}. In our case, we directly apply the second order Poincar\'e inequality to the KPZ equation, which simplifies some analysis. On the other hand, the fourth moment theorem covers more general distributions of the random environment and the convergence of a discrete directed polymer model to the Edwards-Wilkinson limit was proved in \cite[Theorem 1.6]{CRS18}, while we only deal with the continuous Gaussian environment in our setting. %\blue{A possible way to extend our approach to  the non-Gaussian discrete environment is to use the result in \cite{chatterjee1}.}

The approach in \cite{chatterjee2018constructing} relies on the Feynman-Kac formula and the concentration inequality to control the intersection time of two polymer paths. While a naive application of the Gaussian-Poincar\'e inequality fails for a similar reason as our Lemma~\ref{l.1stde} does not allow $\delta\to0$, the authors have designed a clever recursive scheme that is similar to perturbative renormalization, using which they obtained the desired estimates to prove the tightness.

\subsubsection*{The assumption of $\beta\ll1$} Throughout the paper, we assume $\beta<\beta_0$ for some $\beta_0\ll1$, which is used to  control expectations of the form $\E_B[ e^{\beta^2 f_\eps(B)}g_\eps(B)]$, where $f_\eps,g_\eps$ are Brownian functionals and $f_\eps$ measures the intersection time of multiple pairs of independent Brownian motions. Take $f_\eps(B)=|\log\eps|^{-1}\int_0^{t/\eps^2} R(B^1_s-B^2_s)ds$ for example. It is easy to  show that $\E_B[ e^{\beta^2 f_\eps(B)}]\les1$ for all $\beta<\sqrt{2\pi}$, and we can view 
\[
\E_B[ e^{\beta^2 f_\eps(B)}g_\eps(B)]=\E_B[e^{\beta^2 f_\eps(B)}]\frac{\E_B[ e^{\beta^2 f_\eps(B)}g_\eps(B)]}{\E_B[e^{\beta^2 f_\eps(B)}]}
\] as the average of $g_\eps(B)$ under the annealed polymer measure. In the weak disorder regime, the polymer behaves like the Brownian motion, so ideally we would like to control $\E_B[ e^{\beta^2 f_\eps(B)}g_\eps(B)]$ in terms of $\|g_\eps(B)\|_1$. As it is hard to achieve this, we sacrifice to use the H\"older's inequality
\[
\E_B[ e^{\beta^2 f_\eps(B)}g_\eps(B)]\les \E_B[ e^{p\beta^2f_\eps(B)}]^{1/p} \|g_\eps(B)\|_q
\]
 for the H\"older exponent $p,q>1$. To make the error small enough for our purpose, we need $q$ to be close to $1$ so that $\|g_\eps(B)\|_q$ is close to $\|g_\eps(B)\|_1$. As a result, the dual exponent $p$ is large which put a more restrictive condition on $\beta$ through the constraint of $p\beta^2<2\pi$. From this perspective, the method of using the chaos expansion and the hypercontractivity \cite[Theorem B.1]{CRS18} is preferable in obtaining sharp estimates, compared to analyzing the Feynman-Kac representation and  Brownian functionals.
 
\subsubsection*{Asymptotics of $\E[h_\eps]$} While we are only interested in the random fluctuations of $h_\eps$ here, it is natural to ask whether  the average height $\E[h_\eps]$ can be expressed more explicitly in terms of $\eps$. %Some asymptotics are already provided in \cite[Lemma 7.4]{chatterjee2018constructing} and \cite[]{}. 
 Recall that $\beta_\eps=\beta|\log\eps|^{-1/2}$ and $u_\eps$ solves the equation interpreted in the It\^o's sense. By It\^o's formula, we know that $\beta_\eps^{-1}\log u_\eps\stackrel{\text{law}}{=} \tilde{h}_\eps$ with 
 \[
 \partial_t \tilde{h}_\eps=\tfrac12\Delta \tilde{h}_\eps+\tfrac12\beta_\eps|\nabla \tilde{h}_\eps|^2+\dot{W}_\eps-\tfrac{\beta_\eps R(0)}{2\eps^2}.
 \]
 By the negative and positive moments bound on $u_\eps$, we have
 \[
 \E[\tilde{h}_\eps]=\E[\beta_\eps^{-1}\log u_\eps]=O(|\log \eps|^{\frac12}).
 \]
 Compare to the equation \eqref{e.maineq1} satisfied by $h_\eps$, we see that $\tilde{h}_\eps(t,x)=h_\eps(t,x)-\tfrac{\beta_\eps R(0)}{2\eps^2}t$, which implies 
 \[
 \E[h_\eps(t,x)]=\tfrac{\beta_\eps R(0)}{2\eps^2}t+O(|\log \eps|^{1/2})
 \]
 and matches \cite[Lemma 7.4]{chatterjee2018constructing}. On the other hand, it was shown in \cite{caravenna2015universality} that $\E[\log u_\eps]\to  -\frac12\log \frac{2\pi}{2\pi-\beta^2}$, thus a more precise expansion takes the form 
 \[
 \E[h_\eps(t,x)]=\tfrac{\beta_\eps R(0)}{2\eps^2}t+\beta_\eps^{-1}\E[ \log u_\eps]=\tfrac{\beta_\eps R(0)}{2\eps^2}t-\tfrac12\beta_\eps^{-1}\log \tfrac{2\pi}{2\pi-\beta^2}+o(|\log \eps|^{1/2}).
 \]

 \section{Variance convergence}\label{s.convar}

To simplify the notation, we define 
\[
M_{\eps,j}(t,x):=\exp\left(\beta_\eps\int_0^{t/\eps^2}V(s,\tfrac{x}{\eps}+B_s^j)ds-\tfrac{\beta_\eps^2R(0)t}{2\eps^2}\right),
\]
where $\{B^j\}_j$ are independent Brownian motions. For any set $I\subset \R_+, x\in\R^2$ and Brownian motions $B^i,B^j$, we define 
\begin{equation}\label{e.defcR}
\cR(I,x,B^i,B^j)=\int_IR(x+B^i_s-B^j_s)ds
\end{equation}
as the intersection time of $B^i,B^j$ during the interval $I$, and $x$ is the initial distance. For $I=[0,T]$, we write $\cR(T,x,B^i,B^j)=\cR([0,T],x,B^i,B^j)$. 

The following lemma will be used repeatedly and is taken from \cite[Lemma 3.1]{dunlap2018kpz}.

\begin{lemma}\label{l.holder}
For any $n\in\Z_+$ and $q>1$, there exists $\beta(n,q)>0$ such that if $\beta<\beta(n,q)$, then for any random variable $F(B^1,\ldots,B^n)\geq0$, $t>0$ and $\{x_j\in\R^2\}_{j=1,\ldots,n}$, we have
\begin{equation}\label{e.keyes}
\E\left[ \frac{\E_B[ \prod_{j=1}^n M_{\eps,j}(t,x_j) F(B^1,\ldots,B^n)]}{\prod_{j=1}^nZ_\eps(t,x_j)}\right] \les \E_B[ F(B^1,\ldots,B^n)^q]^{1/q}.
\end{equation}
\end{lemma}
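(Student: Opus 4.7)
The plan is a two-step Hölder argument, combined with the negative-moment estimate of Proposition~\ref{p.negative} and a subcritical exponential-moment bound on Brownian intersection times. First, I would apply Hölder in the noise probability $\Pb$ with conjugate exponents $(\alpha,\alpha')$, where $\alpha\in\Z_+$ is to be chosen later:
\[
\E\!\left[\frac{\E_B[\prod_j M_{\eps,j}(t,x_j)F]}{\prod_j Z_\eps(t,x_j)}\right]\le \E\!\left[\Big(\E_B\!\left[\prod_j M_{\eps,j}(t,x_j)F\right]\Big)^{\!\alpha}\right]^{1/\alpha}\E\!\left[\prod_j Z_\eps(t,x_j)^{-\alpha'}\right]^{1/\alpha'}.
\]
A further Hölder over the $n$ factors reduces the second expectation to $\prod_j \E[Z_\eps(t,x_j)^{-n\alpha'}]^{1/n}$, which is bounded uniformly in $\eps$ by Proposition~\ref{p.negative} for $\beta$ small enough (depending on $n\alpha'$).

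For the first factor I would expand the $\alpha$-th power as a replica by introducing $\alpha n$ independent Brownian motions $\{B^{i,j}\}_{i\le\alpha,\,j\le n}$, writing
\[
\big(\E_B[\textstyle\prod_j M_{\eps,j}(t,x_j)F]\big)^\alpha=\E_{\vec B}\!\left[\prod_{i=1}^\alpha F(B^{i,1},\ldots,B^{i,n})\cdot\prod_{i,j}M_\eps(t,x_j;B^{i,j})\right].
\]
Commuting with $\E$ by Fubini (all factors are nonnegative), the product of $\alpha n$ exponential martingales becomes the exponential of a centered Gaussian functional of $\dot W$ whose It\^o correction exactly cancels the diagonal $(i,j)=(i',j')$ contributions, leaving
\[
\E\!\left[\prod_{i,j}M_\eps(t,x_j;B^{i,j})\right]=\exp\!\left(\frac{\beta_\eps^2}{2}\sum_{(i,j)\ne(i',j')}\cR_{ij,i'j'}\right),
\]
with $\cR_{ij,i'j'}:=\cR([0,t/\eps^2],(x_j-x_{j'})/\eps,B^{i,j},B^{i',j'})$ in the sense of \eqref{e.defcR}.

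Next I would apply Hölder a second time, now in $\E_{\vec B}$ and with the lemma's exponents $(q,q')$, to separate $F$ from the intersection-time exponential. Since the $\alpha$ replicas are independent, $\E_{\vec B}[\prod_i F(B^{i,1},\ldots,B^{i,n})^q]^{1/q}=\E_B[F^q]^{\alpha/q}$. Raising everything back to the $1/\alpha$ power and combining both Hölder factors gives
\[
\E\!\left[\frac{\E_B[\prod_j M_{\eps,j}F]}{\prod_j Z_\eps}\right]\les \E_B[F^q]^{1/q}\cdot\E_{\vec B}\!\left[\exp\!\left(\frac{q'\beta_\eps^2}{2}\sum_{(i,j)\ne(i',j')}\cR_{ij,i'j'}\right)\right]^{1/(\alpha q')}.
\]
The lemma thus reduces to the uniform-in-$\eps$ boundedness of the displayed exponential moment of pairwise intersection times of $\alpha n$ independent 2D Brownian motions.

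The main obstacle is this last exponential-moment bound. Since $\beta_\eps^2=\beta^2/|\log\eps|$ and each $\cR$ on the window $[0,t/\eps^2]$ is typically of order $|\log\eps|$ by Kallianpur-Robbins (cf.\ \eqref{e.wkconKR}), every summand in the exponent is $O(1)$ and uniform control in the subcritical regime is expected but nontrivial. The standard route is to express $\E_{\vec B}[\exp(c\beta_\eps^2\sum\cR)]$ as the value of the solution of a deterministic parabolic equation on the relative-configuration space of the $\alpha n$ motions with bounded potential $c\beta_\eps^2 R$, and close a Duhamel/Gronwall iteration in the spirit of \eqref{e.1271}. This forces $\beta<\beta(n,q,\alpha)$; fixing $\alpha$ (say $\alpha=2$) once and for all, the threshold $\beta(n,q)$ appearing in the statement is the smaller of the resulting exponential-moment and negative-moment thresholds.
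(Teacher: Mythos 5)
Your argument, once you fix $\alpha=2$, is exactly the paper's proof: Cauchy--Schwarz in $\Pb$ separates the denominator (handled by Proposition~\ref{p.negative}), the square of the Brownian expectation is written as a $2n$-replica expectation, the noise expectation yields $\exp\bigl(\tfrac{\beta_\eps^2}{2}\sum_{j\neq k}\cR\bigr)$, and a final H\"older in $\E_B$ against $F^q$ reduces everything to a uniform-in-$\eps$ exponential moment of intersection times, which is the paper's Lemma~\ref{l.exmm}. The only cosmetic difference is that you leave $\alpha$ free before specializing, and you re-sketch the Portenko-type argument for the exponential-moment bound rather than invoking it as a separate lemma.
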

\begin{proof}
By the
Cauchy-Schwarz inequality and Proposition~\ref{p.negative}, the square of the  l.h.s. of \eqref{e.keyes} is bounded by 
\[
\begin{aligned}
&\E\left[\left|\E_B[ \prod_{j=1}^n M_{\eps,j}(t,x_j) F(B^1,\ldots,B^n)]\right|^2\right]\\
&=\E_B\E\left[ \prod_{j=1}^{2n} M_{\eps,j}(t,x_j) F(B^1,\ldots,B^n) F(B^{n+1},\ldots,B^{2n})\right],
\end{aligned}
\]
where $x_{j+n}=x_j$ for $j=1,\ldots,n$. Evaluating the expectation with respect to $\dot{W}$, we obtain 
\[
\E\left[\prod_{j=1}^{2n} M_{\eps,j}(t,x_j)\right]= \exp\left(\tfrac{\beta_\eps^2}{2}\sum_{j,k=1}^{2n}\1_{j\neq k} \cR(\tfrac{t}{\eps^2},\tfrac{x_j-x_k}{\eps},B^j,B^k)\right).
\]
With
$p=\frac{q}{q-1}$, Lemma~\ref{l.exmm} shows that the r.h.s. of the above expression has an $L^p$ norm that is bounded uniformly in $\eps$ and $x_j$, provided that $\beta$ is chosen small. We apply H\"older
inequality to complete the proof.
%the random variable 
%\[
%\cR_\eps:=p\sum_{j\neq k} \cR(\frac{t}{\eps^2},\frac{x_j-x_k}{\eps},B^j,B^k)
%\]
%has exponential tails.
\end{proof}

\subsection{The analysis of $I_{1,\eps}$}
Recall that $I_{1,\eps}$ is defined in \eqref{e.defI1}.
%that 
%\[
%I_{1,\eps}=\beta\int_0^K\int_{\R^d}\left(\int_{\R^d} g(x)\E\left[ \frac{\E_B[ M_\eps(t,x)\Phi^\eps_{t,x,B}(s,y)]}{Z_\eps(t,x)} \bigg| \F_s\right]dx \right) dW(s,y).
%\]
\begin{lemma}\label{l.i1}
For $K_\eps=\tfrac{1}{\eps^2|\log\eps|^\alpha}$ with $\alpha>1$, we have $\E[I_{1,\eps}^2]\to 0$ as $\eps\to0$.
\end{lemma}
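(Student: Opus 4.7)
The plan is to start with It\^o's isometry, use the $L^2$-contraction property of conditional expectation to drop the conditioning on $\F_s$, eliminate the ratio $M/Z$ by Lemma~\ref{l.holder}, and then reduce the bound to a moment estimate on the planar intersection time $\cR(K_\eps,w,B^1,B^2)$.

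Concretely, It\^o's isometry gives
\[
\E[I_{1,\eps}^2] = \int_0^{K_\eps}\!\!\int_{\R^2} \E\!\left[\Big(\!\int_{\R^2} g(x)\,\E\big[\tfrac{\E_B[M_\eps(t,x)\Phi^\eps_{t,x,B}(s,y)]}{Z_\eps(t,x)}\,\big|\,\F_s\big] dx\Big)^{\!2}\right] dy\, ds,
\]
and contraction lets me drop the conditioning. Expanding the square by duplicating the Brownian motion and the spatial variable into $(B^1,B^2)$ and $(x_1,x_2)$, the $dy$-integral collapses $\varphi(\tfrac{x_1}{\eps}+B^1_s-y)\varphi(\tfrac{x_2}{\eps}+B^2_s-y)$ into $R(\tfrac{x_1-x_2}{\eps}+B^1_s-B^2_s)$, and subsequent $ds$-integration over $[0,K_\eps]$ produces $\cR(K_\eps,\tfrac{x_1-x_2}{\eps},B^1,B^2)$. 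Applying Lemma~\ref{l.holder} with $n=2$ and the nonnegative functional $F=\cR(K_\eps,\tfrac{x_1-x_2}{\eps},B^1,B^2)$ then gives
\[
\E[I_{1,\eps}^2]\les \int_{\R^4} g(x_1)g(x_2)\, \E_B\bigl[\cR(K_\eps,\tfrac{x_1-x_2}{\eps},B^1,B^2)^q\bigr]^{1/q}\, dx_1 dx_2,
\]
for any $q>1$ with $\beta<\beta(2,q)$, which by assumption is ensured once $\beta_0$ is small.

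Next I would change variables $w=(x_1-x_2)/\eps$ and use the compact support of $g$ (so that $\int|g(x_1)g(x_1-\eps w)|dx_1\les 1$ uniformly in $w$) to arrive at
\[
\E[I_{1,\eps}^2]\les \eps^2 \int_{\R^2} \E_B\bigl[\cR(K_\eps,w,B^1,B^2)^q\bigr]^{1/q}\, dw.
\]
Given $K_\eps=\eps^{-2}|\log\eps|^{-\alpha}$, it suffices to show the remaining $w$-integral is of order $K_\eps|\log\eps|$, since then $\E[I_{1,\eps}^2]\les |\log\eps|^{1-\alpha}\to 0$ precisely when $\alpha>1$.

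The main obstacle is this last estimate on the intersection-time moments. I expect a uniform pointwise bound $\sup_w\E_B[\cR(K_\eps,w,B^1,B^2)^q]^{1/q}\les |\log\eps|$ via Kac's moment formula together with the classical two-dimensional estimate $\sup_w\int_0^T (G_{2s}\ast R)(w)\, ds\les \log T$, which is essentially the Kallianpur--Robbins scaling already invoked in \eqref{e.wkconKR}--\eqref{e.1271}. Meanwhile the Gaussian tails of the heat kernel force $\E_B[\cR(K_\eps,w,B^1,B^2)]$ to be exponentially small once $|w|^2\gg K_\eps$, localizing the $w$-integrand to a ball of area $O(K_\eps)$ and yielding the desired $K_\eps|\log\eps|$ scaling. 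Everything else is a routine chain of It\^o isometry, duplication of Brownian motions, and the negative-moment input of Proposition~\ref{p.negative} packaged by Lemma~\ref{l.holder}.
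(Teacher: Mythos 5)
Your proof follows the paper closely through the reduction to an intersection-time moment: both start with It\^o's isometry, drop the conditioning by the $L^2$-contraction property of conditional expectation, duplicate the Brownian motion to collapse the two $\varphi$'s into the kernel $R$, and then invoke Lemma~\ref{l.holder} to eliminate the $M/Z$ ratio. The divergence is in the last step. The paper takes $q=2$, then applies one more Cauchy--Schwarz (using $\int|g|\les1$) to pull the square root outside the $(x_1,x_2)$-integral, landing exactly on the weighted second-moment estimate
\[
\int_{\R^4}|g(x_1)g(x_2)|\,\E_B\big[\cR^2(K_\eps,\tfrac{x_1-x_2}{\eps},B^1,B^2)\big]\,dx_1dx_2 \les (1+\log K_\eps)\eps^2K_\eps,
\]
which is precisely the first inequality of Lemma~\ref{l.291}; this yields $\E[I_{1,\eps}^2]\les|\log\eps|^{(1-\alpha)/2}$. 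You instead keep the $1/q$ power inside, change variables $w=(x_1-x_2)/\eps$, and reduce to estimating $\int_{\R^2}\E_B[\cR(K_\eps,w)^q]^{1/q}\,dw$ --- an \emph{unweighted} integral that is not among the bounds supplied by Lemma~\ref{l.291}. Your heuristic for this quantity ($\sup_w$ bound $\les|\log\eps|$ via Portenko/Kac, plus Gaussian localization of the $w$-support to a ball of area $O(K_\eps)$) is plausible, and if carried out carefully it would in fact give the sharper rate $|\log\eps|^{1-\alpha}$, improving on the paper's exponent by a factor of $2$. But as written this is a gap: you need to pass the Gaussian decay from the first moment $\E_B[\cR(K_\eps,w)]$ to the $q$-th root of the $q$-th moment (say, via $\E_B[\cR^q]\les q!(\log K_\eps)^{q-1}\E_B[\cR]$ and then $\int\E_B[\cR]^{1/q}dw\les_q K_\eps$), and this requires a separate computation that the paper sidesteps entirely by the extra Cauchy--Schwarz step. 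So the skeleton matches the paper, but your final reduction trades the paper's self-contained appeal to Lemma~\ref{l.291} for a sharper but unproved intersection-time estimate.
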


\begin{proof}
Writing
%Recall the integral
$I_{1,\eps}
%\eqref{e.defI1}.
=\int_0^{K_\eps}\int_{\R^2} \E[ \Y_{s,y}|\F_s] dW(s,y)$ for the  appropriate
$\Y_{s,y}$, we have by
It\^o's isometry that
\[
\E[I_{1,\eps}^2]=\int_0^{K_\eps}\int_{\R^2} \E[ |\E[ \Y_{s,y}|\F_s] |^2]dyds \leq \int_0^{K_\eps}\int_{\R^2} \E[\Y_{s,y}^2] dyds,
\]
and
\[
\E[\Y_{s,y}^2] = \int_{\R^{4}} g(x_1)g(x_2) \E\left[ \frac{\E_B[\prod_{j=1}^2 M_{\eps,j}(t,x_j)\Phi^\eps_{t,x_j,B^j}(s,y)]}{Z_\eps(t,x_1)Z_\eps(t,x_2)} \right]dx_1dx_2.
\]
Using the fact that 
\[
\int_0^{K_\eps}\int_{\R^2} \prod_{j=1}^2\Phi^\eps_{t,x_j,B^j}(s,y)dyds=\int_0^{K_\eps}  R(\tfrac{x_1-x_2}{\eps}+B^1_s-B^2_s)ds=\cR(K_\eps,\tfrac{x_1-x_2}{\eps},B^1,B^2),
\]
where we recall that 
$R(x)=\int \varphi(x+y)\varphi(y)dy$, we have 
\[
\E[I_{1,\eps}^2] \les \int_{\R^{4}} |g(x_1)g(x_2)| \E\left[ \frac{\E_B[\prod_{j=1}^2 M_{\eps,j}(t,x_j)\cR(K_\eps,\frac{x_1-x_2}{\eps},B^1,B^2)]}{Z_\eps(t,x_1)Z_\eps(t,x_2)} \right]dx_1dx_2.
\]
By Lemma~\ref{l.holder}, we have
\begin{equation}\label{e.301}
\begin{aligned}
\E[I_{1,\eps}^2]\les& \int_{\R^{4}} |g(x_1)g(x_2)| \sqrt{\E_B[\cR^2(K_\eps,\tfrac{x_1-x_2}{\eps},B^1,B^2)]}dx_1dx_2\\
\les& \sqrt{\int_{\R^4}|g(x_1)g(x_2)| \E_B[\cR^2(K_\eps,\tfrac{x_1-x_2}{\eps},B^1,B^2)]dx_1dx_2}.
\end{aligned}
\end{equation}
We apply Lemma~\ref{l.291} to deduce 
\[
\E[I_{1,\eps}^2] \les \sqrt{(1+\log K_\eps)\eps^2 K_\eps} \les \tfrac{1}{|\log\eps|^{(\alpha-1)/2}}\to0.
\]
The proof is complete.
\end{proof}

\subsection{The analysis of $I_{2,\eps}$}

Recall that $I_{2,\eps}$ is defined in \eqref{e.defI2}. \begin{lemma}\label{l.i2}
For $K_\eps=\tfrac{1}{\eps^2|\log\eps|^\alpha}$ with $\alpha>0$, we have $\E[I_{2,\eps}^2]\to0$ as $\eps\to0$.
\end{lemma}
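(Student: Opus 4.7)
The plan is to mirror the strategy of Lemma~\ref{l.i1}, with the additional task of exploiting the closeness of $Z(K_\eps,x/\eps)$ to $Z_\eps(t,x)$ in a quantitative $L^m$ sense. First I would apply It\^o's isometry together with conditional Jensen to $I_{2,\eps}=\int_{K_\eps}^{t/\eps^2}\int_{\R^2}\E[\Y_{s,y}|\F_s]dW(s,y)$ and use the identity $Z(K_\eps,x/\eps)^{-1}(Z(K_\eps,x/\eps)/Z_\eps(t,x)-1)=Z_\eps(t,x)^{-1}-Z(K_\eps,x/\eps)^{-1}$, then expand $\Y_{s,y}^2$ and integrate out $(s,y)$. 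The $y$-integral collapses the two $\Phi$-factors into $R((x_1-x_2)/\eps+B^1_s-B^2_s)$, and integrating $s$ over $[K_\eps,t/\eps^2]$ produces the intersection time $\cR_\eps:=\cR([K_\eps,t/\eps^2],(x_1-x_2)/\eps,B^1,B^2)$. This reduces the claim to showing
\[
J_\eps := \int_{\R^4}|g(x_1)g(x_2)|\,\E\!\left[\E_B\!\left[\prod_{j=1}^2 M_{\eps,j}(t,x_j)\cR_\eps\right]\prod_{j=1}^2\!\left|Z_\eps(t,x_j)^{-1}-Z(K_\eps,x_j/\eps)^{-1}\right|\right] dx_1dx_2 \to 0.
\]

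Second, I would apply Cauchy--Schwarz twice (once on $\E$, once on the $x$-integral), splitting $J_\eps \leq \sqrt{\mathbf{A}_\eps}\sqrt{\mathbf{B}_\eps}$, where $\mathbf{A}_\eps:=\int|g(x_1)g(x_2)|\E[\E_B[\prod M_{\eps,j}\cR_\eps]^2]dx_1dx_2$ collects the ``main'' piece and $\mathbf{B}_\eps:=\int|g(x_1)g(x_2)|\E[\prod_j(Z_\eps(t,x_j)^{-1}-Z(K_\eps,x_j/\eps)^{-1})^2]dx_1dx_2$ collects the ``correction''. For $\mathbf{A}_\eps$, a replica-trick expansion $\E_B[\cdot]^2 = \E_{B^{1:4}}[\cdot]$ together with Lemma~\ref{l.exmm} (which controls $\E_W[\prod_{j=1}^{4} M_{\eps,j}]$ in $L^p(\Pb_B)$ for small $\beta$) and H\"older yields $\E[\E_B[\prod M_{\eps,j}\cR_\eps]^2]\les \E_B[\cR_\eps^q]^{2/q}$ for some $q>1$, and the spatial integral is bounded by $O(|\log\eps|)$ via Lemma~\ref{l.291} applied to the interval $[K_\eps,t/\eps^2]\subset [0,t/\eps^2]$, exactly as in the proof of Lemma~\ref{l.i1}.

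Third, for $\mathbf{B}_\eps$ I would write $Z_\eps(t,x)^{-1}-Z(K_\eps,x/\eps)^{-1}=(Z(K_\eps,x/\eps)-Z_\eps(t,x))/[Z_\eps(t,x)Z(K_\eps,x/\eps)]$ and apply H\"older to separate the numerator from the two reciprocals. The reciprocals are uniformly bounded in every $L^m$ by Proposition~\ref{p.negative}. The numerator is a martingale increment of $s\mapsto Z(s,x/\eps)$ (by the Clark--Ocone representation of $Z$), so
\[
\|Z(K_\eps,x/\eps)-Z_\eps(t,x)\|_2^2 = F(t/\eps^2,0)-F(K_\eps,0),
\]
which vanishes as $\eps\to 0$ by the reasoning behind \eqref{e.keps}. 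Interpolating this $L^2$-smallness against uniform $L^m$-bounds on $Z$ (available from Lemma~\ref{l.exmm} for small $\beta$) gives $\|Z(K_\eps,x/\eps)-Z_\eps(t,x)\|_m\to 0$ for every finite $m$, uniformly in $x$. Choosing the H\"older exponents appropriately then yields $\mathbf{B}_\eps=o(|\log\eps|^{-1})$, which combined with $\mathbf{A}_\eps=O(|\log\eps|)$ closes the estimate.

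The main obstacle is the quantitative rate of convergence of $F(t/\eps^2,0)-F(K_\eps,0)$ to zero: it must decay faster than $|\log\eps|^{-1}$ after interpolation, so as to beat the $|\log\eps|^{1/2}$ coming from $\sqrt{\mathbf{A}_\eps}$. The argument behind \eqref{e.keps} shows the contribution of $[K_\eps,t/\eps^2]$ to the mean intersection time is $O(\log|\log\eps|/|\log\eps|)$, which does provide the required rate for any $\alpha>0$; the delicate point is keeping the exponents in the repeated H\"older splits balanced so that the negative-moment and interpolation losses are absorbed.
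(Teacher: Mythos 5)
Your overall strategy---reduce to an intersection-time expression, isolate the martingale increment $b=Z(K_\eps,\cdot)-Z(t/\eps^2,\cdot)$, estimate $\|b\|_2$ from the second-moment formula, and interpolate---is the right one and closely tracks the paper (whose Lemma~\ref{l.292} is exactly your $\|b\|_2^2=F(t/\eps^2,0)-F(K_\eps,0)\les|\log\eps|^{-1+\delta}$ step). The difficulty is in the way you allocate exponents, and as stated your estimate does not close.

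The gap is in the claim $\mathbf{B}_\eps=o(|\log\eps|^{-1})$. Your $\mathbf{B}_\eps$ is controlled by $\|b\|_{4p}^4$ for some $p>1$: after Cauchy--Schwarz inside $\E$ you have four factors of $\Delta_j=Z_\eps^{-1}-Z(K_\eps,\cdot/\eps)^{-1}$, hence four factors of $b$, and the reciprocals contribute only bounded constants by Proposition~\ref{p.negative}. Interpolating $\|b\|_{4p}$ between $\|b\|_2\les|\log\eps|^{-(1-\delta)/2}$ and a uniformly bounded $\|b\|_{m'}$ (finite $m'$, small $\beta$) yields $\|b\|_{4p}\les\|b\|_2^{1-\sigma}$ with $1-\sigma<\tfrac12$ strictly, hence $\|b\|_{4p}^4\les|\log\eps|^{-c}$ with $c<1$; one cannot reach $c\geq 1$. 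Combined with your claimed $\mathbf{A}_\eps=O(|\log\eps|)$, this gives $\sqrt{\mathbf{A}_\eps\mathbf{B}_\eps}\les|\log\eps|^{(1-c)/2}$, which does \emph{not} go to zero. Two fixes are available. One is the paper's: replace the symmetric Cauchy--Schwarz by the asymmetric H\"older split $\|a\|_8\,\|b\|_2\,\|b\|_4$, putting one $b$-factor in the optimal $L^2$ and only the other in $L^4$; then $\|b\|_2\|b\|_4\les\|b\|_2^{(6-\delta)/4}\les|\log\eps|^{-3/4+\delta'}$, which beats the $\sqrt{|\log\eps|}$ loss from the $\|a\|_8$-integral as soon as $\delta'<\tfrac14$. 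The other is to sharpen your own $\mathbf{A}_\eps$: instead of bounding the intersection time by its second moment, interpolate $\E_B[\cR_\eps^q]^{1/q}$ between the first and second moments from Lemma~\ref{l.291} with $q$ close to $1$, which yields $\mathbf{A}_\eps\les|\log\eps|^\theta$ with $\theta>0$ arbitrarily small (at the cost of shrinking $\beta$); then $\sqrt{\mathbf{A}_\eps\mathbf{B}_\eps}\les|\log\eps|^{(\theta-c)/2}\to0$ provided $\theta<c$. Either way you must use one more interpolation than you currently allow for; the statement ``$\mathbf{B}_\eps=o(|\log\eps|^{-1})$'' is not achievable and should be dropped.
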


\begin{proof}
By the same calculation as in the proof of Lemma~\ref{l.i1}, we have
\[
\E[I_{2,\eps}^2] \les \int_{\R^{4}} g(x_1)g(x_2)A_\eps(x_1,x_2)dx_1dx_2,
\]
with 
\[
\begin{aligned}
&A_\eps(x_1,x_2)\\
&=\E\left[\frac{\E_B[\prod_{j=1}^2 M_{\eps,j}(t,x_j)\cR([K_\eps,t/\eps^2],\tfrac{x_1-x_2}{\eps},B^1,B^2)] }{Z(K_\eps,x_1/\eps)Z(K_\eps,x_2/\eps)} \prod_{j=1}^2\left(\frac{Z(K_\eps,x_j/\eps)}{Z(t/\eps^2,x_j/\eps)}-1\right)\right].
\end{aligned}
\]
Applying Proposition~\ref{p.negative}, H\"older inequality,
and the fact that $Z(t,x)$ is stationary in $x$, we have 
\[
|A_\eps(x_1,x_2)| \les \|a\|_8\|b\|_2\|b\|_4,
\]
where we simply denoted 
\[
a=\E_B[\prod_{j=1}^2 M_{\eps,j}(t,x_j)\cR([K_\eps,t/\eps^2],\tfrac{x_1-x_2}{\eps},B^1,B^2)], \quad b=Z(K_\eps,0)-Z(t/\eps^2,0).
\]
First, we write $b^4=b^{2+\delta}b^{2-\delta}$ and apply H\"older inequality to derive
\[
\E[b^4]=\E[b^{2+\delta}b^{2-\delta}]\leq \E[b^{(2+\delta)p}]^{1/p}\E[b^2]^{1/q}
\]
with $p^{-1}+q^{-1}=1$ and $q=\tfrac{2}{2-\delta}$. Since $|b|\leq Z(K_\eps,0)+Z(t/\eps^2,0)$, we apply Lemma~\ref{l.exmm} to bound $\E[b^{(2+\delta)p}]^{1/p}$ by some constant (for those $\beta\ll1$ depending on $\delta$). Thus we obtain $\|b\|_4\les \|b\|_2^{\frac{2-\delta}{4}}$ and further applying Lemma~\ref{l.292} below yields 
%$\|b\|_4\les \|b\|_2^{\frac{2-\delta}{4}}$. Further applying Lemma~\ref{l.292} below yields
\[
\|b\|_2\|b\|_4 \les \|b\|_2^{\frac{6-\delta}{4}} \les \frac{1}{|\log \eps|^{\frac34-\delta'}} 
\]
for some $\delta'>0$ that is sufficiently close to zero. To analyze $a$, to simplify the notation, we write $a_1(V,B^1,B^2)=\prod_{j=1}^2 M_{\eps,j}(t,x_j)$ and $a_2(B^1,B^2)=\cR([K_\eps,\tfrac{t}{\eps^2}],\tfrac{x_1-x_2}{\eps},B^1,B^2)$ so $a=\E_B[a_1(V,B^1,B^2)a_2(B^1,B^2)]$. We first write
\[
\begin{aligned}
\|a\|_8^8&=\E[|\E_B[a_1(V,B^1,B^2)a_2(B^1,B^2)]|^8]\\
&=\E_B\E[\prod_{j=1}^8 a_1(V,B^{2j-1},B^{2j})a_2(B^{2j-1},B^{2j})],\\
%&=\E[|\E_B[\prod_{j=1}^2 M_{\eps,j}(t,x_j)\cR([K_\eps,t/\eps^2],\tfrac{x_1-x_2}{\eps},B^1,B^2)]|^8]\\
%&\les  \|\cR([K_\eps,t/\eps^2],\tfrac{x_1-x_2}{\eps},B^1,B^2)\|_2^8.
\end{aligned}
\]
where $B^j$ are independent Brownian motions, then we average with respect to $V$ and follow the same proof of Lemma~\ref{l.holder} to derive
\[
\|a\|_8^8\les\E_B[ \prod_{j=1}^8 |a_2(B^{2j-1},B^{2j})|^2]^{1/2}=|\E_B[ |a_2(B^1,B^2)|^2]|^4=\|a_2\|_2^8
\]
This implies 
\[
\begin{aligned}
&\int_{\R^{4}} |g(x_1)g(x_2)| \times \|a\|_8\, dx_1dx_2\\
&\les \sqrt{\int_{\R^{4}} |g(x_1)g(x_2)|\times \|\cR([K_\eps,\tfrac{t}{\eps^2}],\tfrac{x_1-x_2}{\eps},B^1,B^2)\|_2^2 dx_1dx_2} \les\sqrt{|\log \eps|},
\end{aligned}
\]
where the second $\les$ comes from Lemma~\ref{l.291}. The proof is completed by choosing $\delta'<\tfrac14$.
\end{proof}

\begin{lemma}\label{l.292}
Recall $K_\eps=\tfrac{1}{\eps^2|\log\eps|^\alpha}$ with $\alpha>0$. For any $\delta>0$, there exists $\beta(\delta)>0$ such that if $\beta<\beta(\delta)$, we have 
\[
\E[|Z(\tfrac{t}{\eps^2},0)-Z(K_\eps,0)|^2]\les \frac{1}{|\log \eps|^{1-\delta}}.
\]
\end{lemma}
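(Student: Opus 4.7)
The plan is first to rewrite the second moment as a Brownian functional. Using the Feynman--Kac representation
\[
Z(t,0)=\E_B\bigl[\exp\bigl(\beta_\eps\!\int_0^t V(s,B_s)\,ds-\tfrac12\beta_\eps^2 R(0)t\bigr)\bigr],
\]
tensorizing over two independent copies $B^1,B^2$ and integrating out the Gaussian noise $\dot W$ with a direct moment generating function computation yields
\[
\E[Z(t_1,0)Z(t_2,0)]=F(t_1\wedge t_2),\qquad F(T):=\E_B\bigl[\exp\bigl(\beta_\eps^2\cR(T,0,B^1,B^2)\bigr)\bigr].
\]
Expanding the square then reduces the lemma to showing $F(t/\eps^2)-F(K_\eps)\les|\log\eps|^{-(1-\delta)}$.

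\textbf{Step 2 (factorization and elementary bound).} Next, writing $\tilde\cR:=\cR([K_\eps,t/\eps^2],0,B^1,B^2)$ and using the additivity of $\cR$ together with the elementary inequality $e^y-1\leq y e^y$ for $y\geq 0$, I would obtain
\[
F(\tfrac{t}{\eps^2})-F(K_\eps)=\E_B\bigl[e^{\beta_\eps^2\cR(K_\eps,0,B^1,B^2)}(e^{\beta_\eps^2\tilde\cR}-1)\bigr]\leq \beta_\eps^2\,\E_B\bigl[\tilde\cR\,e^{\beta_\eps^2\cR(t/\eps^2,0,B^1,B^2)}\bigr],
\]
which converts the subtraction into a single expectation multiplied by the small factor $\beta_\eps^2=\beta^2/|\log\eps|$.

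\textbf{Step 3 (H\"older and interpolation).} I would now apply H\"older with conjugate exponents $p\in(1,2)$ and $q=p/(p-1)$, bounding the display above by $\beta_\eps^2\|\tilde\cR\|_{L^p(B)}\,\|e^{\beta_\eps^2\cR(t/\eps^2,0,B^1,B^2)}\|_{L^q(B)}$. The $L^q$ factor is uniformly bounded once $\beta$ is small depending on $q$, by Lemma~\ref{l.exmm}. For $\|\tilde\cR\|_{L^p(B)}$ I would interpolate between the first-moment bound
\[
\|\tilde\cR\|_1=\int_{K_\eps}^{t/\eps^2}\E_B[R(B^1_s-B^2_s)]\,ds\les \log(t/(\eps^2 K_\eps))\sim \log|\log\eps|,
\]
which follows from $\E_B[R(B^1_s-B^2_s)]\leq \|p_{2s}\|_\infty\les 1/s$ for $s\geq 1$, and the crude bound $\|\tilde\cR\|_r\leq \|\cR(t/\eps^2,0,B^1,B^2)\|_r\les|\log\eps|$ for any fixed $r$, again extracted from the exponential moment bound. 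Log-convex interpolation then gives $\|\tilde\cR\|_p\les(\log|\log\eps|)^{1-\theta}|\log\eps|^{\theta}$ with $\theta\to 0$ as $p\to 1$ and $r\to\infty$, so that
\[
\beta_\eps^2\|\tilde\cR\|_{L^p(B)}\les \frac{(\log|\log\eps|)^{1-\theta}}{|\log\eps|^{1-\theta}}\leq |\log\eps|^{-(1-\delta)}
\]
once $p$ is chosen so that $\theta<\delta$.

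\textbf{Main obstacle.} The delicate point is balancing the two constraints: Lemma~\ref{l.exmm} forces $q$ to stay below a $\beta$-dependent threshold, so we cannot take $p=1$ (equivalently $q=\infty$) while keeping $\beta$ positive. This is precisely why the statement must allow an arbitrary $\delta>0$ and require $\beta<\beta(\delta)$; the price paid for a finite $q$ is the tiny interpolation loss $|\log\eps|^{\theta}$, absorbed into the $\delta$-slack. The essential quantitative input is the observation that $\|\tilde\cR\|_1\sim\log|\log\eps|$, reflecting the fact that on $[K_\eps,t/\eps^2]$ only a logarithmically small fraction of the full intersection mass accumulates, which is what ultimately drives the bound.
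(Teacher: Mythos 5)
Your proof is correct and follows essentially the same route as the paper: rewrite the second moment as $F(t/\eps^2)-F(K_\eps)$ with $F(T)=\E_B[e^{\beta_\eps^2\cR(T,0,B^1,B^2)}]$, linearize the difference of exponentials, apply H\"older against the exponential moment from Lemma~\ref{l.exmm}, and run an $L^p$-interpolation on $\tilde\cR=\int_{K_\eps}^{t/\eps^2}R\,ds$ anchored at $\|\tilde\cR\|_1\les\log|\log\eps|$. The only cosmetic departure is your upper interpolation endpoint: you use a crude $\|\tilde\cR\|_r\les|\log\eps|$ for a fixed but arbitrarily large $r$, where the paper uses the sharper $\|\tilde\cR\|_2\les\sqrt{|\log\eps|\log|\log\eps|}$; both suffice. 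One small imprecision: in the interpolation $1/p=(1-\theta)+\theta/r$, the exponent $\theta\to0$ only as $p\to1$ (taking $r\to\infty$ with $p$ fixed gives $\theta\to 1-1/p>0$), so the parameter you must push is $p\to1$, with $\beta$ shrinking accordingly to keep the conjugate exponential moment bounded, exactly as in the paper.
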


\begin{proof}
By the second moment calculation, we have 
\[
\begin{aligned}
    \E[|Z(\tfrac{t}{\eps^2},0)-Z(K_\eps,0)|^2]&=\E[Z(\tfrac{t}{\eps^2},0)^2]-\E[Z(K_\eps,0)^2]\\
    &=\E_B\left[ e^{\beta_\eps^2\int_0^{t/\eps^{2}}R(B_{2s})ds}-e^{\beta_\eps^2 \int_0^{K_\eps} R(B_{2s})ds}\right].
\end{aligned}
\]
Applying the simple inequality $|e^x-e^y|\leq (e^x+e^y)|x-y|$, H\"older inequality and Lemma~\ref{l.exmm}, we have 
\begin{equation}\label{e.11271}
%\E_B\left[e^{\beta_\eps^2 \int_0^{t/\eps^{2}}R(B_{2s})ds} \beta_\eps^2 \int_{K_\eps}^{t/\eps^{2}}R(B_{2s})ds\right] 
\E_B\left[ e^{\beta_\eps^2\int_0^{t/\eps^{2}}R(B_{2s})ds}-e^{\beta_\eps^2 \int_0^{K_\eps} R(B_{2s})ds}\right]\les \frac{1}{|\log\eps|}\bigg\|\int_{K_\eps}^{t/\eps^2}R(B_{2s})ds\bigg\|_q%\sqrt{\E_B\left[ \left(\int_{K_\eps}^{t/\eps^{2}} R(B_{2s})ds\right)^2\right]}
\end{equation}
for any $q>1$ (provided that $\beta<\beta(q)$). To estimate the above $L^q$ norm, we note that 
\[
\bigg\|\int_{K_\eps}^{t/\eps^2}R(B_{2s})ds\bigg\|_1  \les \log |\log \eps|, \quad\quad \bigg\|\int_{K_\eps}^{t/\eps^2}R(B_{2s})ds\bigg\|_2   \les \sqrt{|\log \eps|(\log |\log\eps|)},
\]
with both estimates coming from the same proof of Lemma~\ref{l.291}. More precisely, for the first estimate, we simply write the expectation in terms of the convolution of $R$ with the heat kernel and an integration of the heat kernel in time in $[K_\eps,t/\eps^2]$ leads to the bound of $\log \tfrac{t}{\eps^2K_\eps}$. The analysis of the second estimate is the same and gives an upper bound $\sqrt{\log \tfrac{t}{\eps^2K_\eps}\log \tfrac{t}{\eps^2}}$. For $\theta\in(0,1)$ and $q=\tfrac{2}{2-\theta}$, by the $L^p-$interpolation inequality we have 
\begin{equation}\label{e.302}
\bigg\|\int_{K_\eps}^{t/\eps^2}R(B_{2s})ds\bigg\|_q \les \bigg\|\int_{K_\eps}^{t/\eps^2}R(B_{2s})ds\bigg\|_1^{1-\theta}\bigg\|\int_{K_\eps}^{t/\eps^2}R(B_{2s})ds\bigg\|_2^{\theta}\les |\log \eps|^\delta,
\end{equation}
provided that $\theta$ is chosen sufficiently close to zero (i.e., $q$ is sufficiently close to $1$). The proof is complete.
\end{proof}

\subsection{The analysis of $I_{3,\eps}$}
Recall that $I_{3,\eps}$ is defined in \eqref{e.defI3}. Using the fact that $\E[M(t/\eps^2,x/\eps)|\F_s]=M(s,x/\eps)$, we have 
that
\[
I_{3,\eps}=\int_{K_\eps}^{t/\eps^2}\int_{\R^2} \left(\int_{\R^2} \frac{g(x)}{Z(K_\eps,x/\eps)}\E_B[ M(s,x/\eps)\Phi^\eps_{t,x,B}(s,y)] dx\right) dW(s,y).
\]

For any $T>0,x_1,x_2\in\R^2$ and a standard $2$-dimensional 
Brownian motion $\bar{B}$, we define the deterministic function
\[
\cH_\eps(T,x_1,x_2)=\E_{\bar{B}}\left[ e^{\beta^2_\eps \int_0^T R(x_1+\bar{B}_{2s})ds} \big|\bar{B}_{2T}=x_2\right].
\]
We introduce the following notation: for any $x,y\in\R^2$, the expectation $\hat{\E}_{x,y}$ is defined as 
\[
\hat{\E}_{x,y}[F]=\E\left[\frac{\E_B[ M_1(K_\eps,x)M_2(K_\eps,y) F]}{Z(K_\eps,x)Z(K_\eps,y)}\right]
\]
for any random variable $F$. In particular, we will consider functional of 
\[
\X_{K_\eps}=B^1_{K_\eps}-B^2_{K_\eps},
\]
so
\[
\hat{\E}_{x,y}[F(\X_{K_\eps})]=\E\left[\frac{\E_B[ M_1(K_\eps,x)M_2(K_\eps,y) F(B^1_{K_\eps}-B^2_{K_\eps})]}{Z(K_\eps,x)Z(K_\eps,y)}\right].
\]
Note that we have omitted the dependence of the expectation $\hat{\E}_{x,y}$ on $\eps$ to simplify the notation.
 
The following three lemmas combine to show the convergence of 
\begin{equation}\label{e.coni3}
\E[I_{3,\eps}^2]\to \sigma_t^2
\end{equation} with $\sigma_t^2$ given in \eqref{e.defsigma}.

\begin{lemma}\label{l.i3re}
$\E[I_{3,\eps}^2]= \int_{0}^{t-\eps^2K_\eps} \G_\eps(s) ds$,
with 
\begin{equation}
\begin{aligned}
\G_\eps(s)=\int_{\R^{6}}& g(x-w)g(x) R(y) \\
&\times \hat{\E}_{-w/\eps,0}\left[G_{2s}(w+\eps y-\eps \X_{K_\eps})\cH_\eps(\tfrac{s}{\eps^2},y,\X_{K_\eps}-\tfrac{w}{\eps}-y)\right]dxdydw.
\end{aligned}
\end{equation}
\end{lemma}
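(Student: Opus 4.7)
The plan is to apply It\^o's isometry to $I_{3,\eps}$ (in the reduced form already recorded in the text, where the martingale property has been used to replace $\E[M_\eps(t,x)\mid\F_s]$ by $M(s,x/\eps)$), tensorise the square over two independent copies $B^1,B^2$ of Brownian motion, and then split the noise at $K_\eps$: the contribution from $[0,K_\eps]$ will be packaged into a hat expectation, while the contribution from $[K_\eps,s]$ will be evaluated explicitly as a Gaussian second moment.

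First I would apply It\^o's isometry and carry out the $y$-integration using $\int\Phi^\eps_{t,x_1,B^1}(s,y)\Phi^\eps_{t,x_2,B^2}(s,y)dy=R(\tfrac{x_1-x_2}{\eps}+B^1_s-B^2_s)$. This yields
\[
\E[I_{3,\eps}^2]=\int_{K_\eps}^{t/\eps^2}\int_{\R^4} g(x_1)g(x_2)\,\E\left[\frac{\E_B[M_1(s,x_1/\eps)M_2(s,x_2/\eps)R(\tfrac{x_1-x_2}{\eps}+B^1_s-B^2_s)]}{Z(K_\eps,x_1/\eps)Z(K_\eps,x_2/\eps)}\right]dx_1dx_2ds.
\]
Decomposing $M_j(s,x_j/\eps)=M_j(K_\eps,x_j/\eps)\widetilde M_j$ with $\widetilde M_j$ measurable with respect to the noise on $[K_\eps,s]$ only, and using independence of that noise from $\F_{K_\eps}$, a standard Gaussian second-moment calculation (as in Lemma~\ref{l.exmm}) gives
\[
\E\bigl[\widetilde M_1\widetilde M_2\bigm| B^1,B^2,\F_{K_\eps}\bigr]=\exp\Bigl(\beta_\eps^2\int_{K_\eps}^{s}R(\tfrac{x_1-x_2}{\eps}+B^1_r-B^2_r)dr\Bigr).
\]

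Next I would use the Markov property to write $B^j_r=B^j_{K_\eps}+\bar B^j_{r-K_\eps}$ for $r\ge K_\eps$, replace $\bar B^1-\bar B^2$ in law by $\bar B_{2\cdot}$ for an independent standard planar Brownian motion $\bar B$, and condition on the endpoint $v:=\bar B_{2(s-K_\eps)}$. After changing variables $y=\tfrac{x_1-x_2}{\eps}+\X_{K_\eps}+v$, the inner $\bar B$-expectation turns into
\[
\int_{\R^2}R(y)\,G_{2(s-K_\eps)}(y-\tfrac{x_1-x_2}{\eps}-\X_{K_\eps})\,\cH_\eps\bigl(s-K_\eps,\tfrac{x_1-x_2}{\eps}+\X_{K_\eps},y-\tfrac{x_1-x_2}{\eps}-\X_{K_\eps}\bigr)dy.
\]
A time-reversal of the Brownian bridge defining $\cH_\eps$, combined with $R(-\cdot)=R(\cdot)$, supplies the identity $\cH_\eps(T,a,v)=\cH_\eps(T,a+v,-v)$, which re-expresses the conditional expectation as $\cH_\eps(s-K_\eps,y,\X_{K_\eps}+\tfrac{x_1-x_2}{\eps}-y)$.

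Finally I would rescale time by $s'=\eps^2(s-K_\eps)\in[0,t-\eps^2 K_\eps]$ (so $ds=\eps^{-2}ds'$), relabel the macroscopic spatial variables as $x=x_2$, $w=x_2-x_1$ (so that $g(x_1)g(x_2)=g(x-w)g(x)$ and $\tfrac{x_1-x_2}{\eps}=-w/\eps$), and invoke spatial stationarity of $\dot W$ to shift base points so that $\hat\E_{(x-w)/\eps,x/\eps}=\hat\E_{-w/\eps,0}$. The Brownian scaling $G_{T}(z/\eps)=\eps^2 G_{T\eps^2}(z)$ converts the heat kernel into $\eps^2 G_{2s'}(w+\eps y-\eps\X_{K_\eps})$, and the $\eps^2$ cancels the $\eps^{-2}$ from the time rescaling to produce $\G_\eps(s')$ as advertised. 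The main obstacle will be organising these substitutions cleanly so that every argument of $\cH_\eps$ and $G_{2s'}$ appears in exactly the asserted form; the Fubini swaps needed to move the $\bar B$-expectation inside the hat expectation are justified uniformly in $\eps$ by Proposition~\ref{p.negative} and Lemma~\ref{l.holder}.
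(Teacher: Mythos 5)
Your proposal is correct, and it fills in precisely what the paper leaves implicit by referring to the analogous computation in \cite[Lemma 3.5]{dunlap2018kpz}: Itô's isometry, the Gaussian second-moment identity for $\widetilde M_1\widetilde M_2$ conditionally on $\F_{K_\eps}$ and the Brownian paths, the Markov decomposition at time $K_\eps$ to isolate the dependence on $\X_{K_\eps}$, conditioning on the bridge endpoint to produce $\cH_\eps$, and the time-reversal identity $\cH_\eps(T,a,v)=\cH_\eps(T,a+v,-v)$ combined with the macroscopic rescaling $G_{T/\eps^2}(z/\eps)=\eps^2 G_T(z)$ and spatial stationarity of the noise. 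This is the same route the paper intends.
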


\begin{lemma}\label{l.i3bd}
There exists $\beta_0>0$ so that there exists $\gamma\in (0,1)$ such that,  for all  $\beta<\beta_0$,
% For any $\gamma\in(0,1/2d)$, 
$\G_\eps(s)\les s^{-\gamma}$ for $s\in(0,t)$.
\end{lemma}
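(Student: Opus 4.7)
The plan is to reduce $\G_\eps(s)$ to purely analytic heat-kernel estimates. First I would apply Lemma~\ref{l.holder} with $n=2$ and some exponent $q>1$ close to $1$ (so the smallness of $\beta$ is preserved) to strip off the polymer weight $M_1 M_2/(ZZ)$ inside $\hat{\E}_{-w/\eps,0}$, obtaining
\[
\hat{\E}_{-w/\eps,0}\bigl[G_{2s}(w+\eps y-\eps\X_{K_\eps})\,\cH_\eps(s/\eps^2,y,\X_{K_\eps}-w/\eps-y)\bigr]\lesssim \E_B[G_{2s}^q\cH_\eps^q]^{1/q}.
\]
Next, Hölder's inequality in $\E_B$ with dual exponents $(r_1,r_2)$ separates the concentrated Gaussian $G_{2s}$ from the bridge-type conditional expectation $\cH_\eps$, bounding the right-hand side by $\E_B[G_{2s}^{qr_1}]^{1/(qr_1)}\,\E_B[\cH_\eps^{qr_2}]^{1/(qr_2)}$.

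The first factor is computed explicitly: since $\eps\X_{K_\eps}$ has Gaussian density $G_{2\eps^2K_\eps}$ by scaling, the convolution identity for Gaussians yields $\E_B[G_{2s}(w+\eps y-\eps\X_{K_\eps})^{qr_1}]\propto s^{1-qr_1}\,G_{2s/(qr_1)+2\eps^2K_\eps}(w+\eps y)$, producing an $s^{-(qr_1-1)/(qr_1)}$ prefactor together with a Gaussian kernel in $w+\eps y$. For the second factor I would apply Jensen's inequality inside the conditional expectation to bound $\cH_\eps^{qr_2}\leq \cH_\eps^{(qr_2)}$, where $\cH_\eps^{(n)}$ denotes the analogous conditional expectation with $\beta_\eps^2$ replaced by $n\beta_\eps^2$. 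Provided $\beta$ is small enough that $qr_2\beta^2<2\pi$, Lemma~\ref{l.exmm} gives the natural bound $\int\cH_\eps^{(qr_2)}(T,y,z)\,G_{2T}(z)\,dz\leq C$. To convert this natural integrability (against $G_{2T}$ with $T=s/\eps^2$) into a bound against the weight $G_{2K_\eps}(\cdot+w/\eps+y)$ inherited from the density of $\X_{K_\eps}$, I would concatenate the Brownian bridge underlying $\cH_\eps^{(qr_2)}$ with an independent Gaussian increment of duration $2K_\eps$, rewriting the target as a single-point evaluation of a modified Feynman-Kac density at time $s/\eps^2+K_\eps$.

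The main obstacle is this last step: since $K_\eps\ll s/\eps^2$, the two Gaussian weights $G_{2K_\eps}$ and $G_{2s/\eps^2}$ live on vastly different scales, so a direct Radon-Nikodym comparison fails and one must carefully track the concentration of the bridge endpoints to avoid picking up spurious factors of $|\log\eps|$. Once this transfer is carried out, integrating the combined bound in $x,w,y$ against the compactly supported $g$ and $R$ absorbs the Gaussian kernel coming from the $G_{2s}$ factor, and choosing $r_1$ sufficiently close to $1$ produces the desired exponent $\gamma=(qr_1-1)/(qr_1)\in(0,1)$.
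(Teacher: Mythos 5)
Your outline is essentially the right one: apply Lemma~\ref{l.holder} with $n=2$ to strip off the size--biased polymer weight, then extract the $s^{-\gamma}$ from the Gaussian factor $G_{2s}$ by computing $\E_B[G_{2s}(w+\eps y-\eps\X_{K_\eps})^q]^{1/q}$ explicitly and integrating in $w$, which yields $\gamma=1-\tfrac1{qr_1}\in(0,1)$ after absorbing the residual Gaussian kernel by the compactly supported $g$ and $R$. This is the correct mechanism for the $s^{-\gamma}$ bound.

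The gap is in how you treat the bridge factor $\cH_\eps$. You claim that Lemma~\ref{l.exmm} only supplies the ``natural'' integral bound $\int\cH_\eps^{(n)}(T,y,z)\,G_{2T}(z)\,dz\leq C$ (which is the content of \eqref{eq:BMconcl}) and that one must therefore transfer integrability against $G_{2T}$, with $T=s/\eps^2$, into integrability against the much more concentrated $G_{2K_\eps}$ coming from the law of $\X_{K_\eps}$. But Lemma~\ref{l.exmm} also states \eqref{eq:bridgeconcl}, which is precisely the \emph{pointwise} bound
$\sup_{x,y,\eps}\E_B\big[e^{\beta_\eps^2\int_0^{T}R(x+B_s)ds}\,\big|\,B_{T}=y\big]<\infty$,
and this is exactly a uniform-in-everything bound on $\cH_\eps(T,x,y)$ for $\beta$ small enough (the proof via \eqref{e.291} and Portenko's lemma is insensitive to replacing $t/\eps^2$ by a smaller horizon $s/\eps^2$, $s\in(0,t]$). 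Once you use \eqref{eq:bridgeconcl}, the entire final paragraph disappears: you do not need the H\"older split into $(r_1,r_2)$, you do not need to raise $\cH_\eps$ to a power and invoke Jensen, and the ``main obstacle'' you identify --- the mismatch between $G_{2K_\eps}$ and $G_{2s/\eps^2}$, and the concatenation of the bridge with a $K_\eps$-increment --- is a phantom: there is no Radon--Nikodym comparison to make. After Lemma~\ref{l.holder} you simply write
$\E_B[G_{2s}^q\cH_\eps^q]^{1/q}\leq\|\cH_\eps\|_\infty\,\E_B[G_{2s}^q]^{1/q}$
and run your Gaussian computation with $r_1=1$, getting $\gamma=1-1/q$.

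The concatenation argument you sketch is also not quite sound as written: averaging the bridge endpoint $\X_{K_\eps}-w/\eps-y$ over the law $G_{2K_\eps}$ of $\X_{K_\eps}$ does \emph{not} coincide with conditioning a longer bridge of duration $2(s/\eps^2+K_\eps)$ at an intermediate time --- the intermediate marginal of such a bridge is a Gaussian centred on the chord between the endpoints, not on $-w/\eps-y$, so the two measures differ by exactly the sort of shift you would have to control. So if you insisted on this route you would indeed have to do real work there. Fortunately \eqref{eq:bridgeconcl} makes it unnecessary, and that is also what the reference argument (cf.\ \cite[Lemma 3.6]{dunlap2018kpz}) relies on.
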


\begin{lemma}\label{l.i3lim}
For any $s\in(0,t)$, $\G_\eps(s)\to \nu_{\mathrm{eff}}^2 \int_{\R^{4}} g(x-w)g(x)G_{2s}(w)dwdx$, as $\eps\to0$.
\end{lemma}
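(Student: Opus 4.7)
The plan is to pass to the limit inside the triple integral defining $\G_\eps(s)$, using Lemma~\ref{l.i3bd} as a dominating envelope. Three limits are required: (i) $G_{2s}(w+\eps y-\eps\X_{K_\eps})\to G_{2s}(w)$ after the $\hat{\E}_{-w/\eps,0}$-expectation; (ii) $\cH_\eps(s/\eps^2, y, \X_{K_\eps}-w/\eps-y)\to \nu_{\mathrm{eff}}^2$ in a sense compatible with (i); and (iii) the $R(y)\,dy$-integral collapsing via $\int R = 1$. Combining these gives precisely $\nu_{\mathrm{eff}}^2\int_{\R^4} g(x-w)g(x)G_{2s}(w)\,dw\,dx$.

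For (i), I would first establish that $\hat{\E}_{-w/\eps, 0}[|\eps\X_{K_\eps}|^2]\to 0$ uniformly in $w$. Under the free Wiener measure, $\X_{K_\eps}=B^1_{K_\eps}-B^2_{K_\eps}$ is a centered Gaussian of variance $2K_\eps$, so $\eps\X_{K_\eps}$ has variance $2/|\log\eps|^\alpha \to 0$. To transfer the bound to the tilted measure, apply the Cauchy--Schwarz argument from Lemma~\ref{l.holder} together with Proposition~\ref{p.negative} and Lemma~\ref{l.exmm}, yielding $\hat{\E}_{-w/\eps, 0}[|\eps\X_{K_\eps}|^2]\les \E_B[|\eps\X_{K_\eps}|^4]^{1/2}\to 0$. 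Continuity and boundedness of $G_{2s}$ for fixed $s>0$, together with the compact support of $R$ which keeps $\eps y$ small, then produce the desired convergence after both the $\hat{\E}$-expectation and the $R(y)\,dy$-integration.

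For (ii), the guiding principle is exactly the Kallianpur--Robbins heuristic of \eqref{e.wkconKR}--\eqref{e.1271}: with $T=s/\eps^2$, the dominant contribution to $\int_0^T R(x_1+\bar B_{2r})\,dr$ comes from times $r\les K_\eps\ll T$, on which the Brownian bridge is close to a free Brownian motion regardless of its endpoint. A clean way to execute this is to expand the exponential in $\cH_\eps(T,x_1,x_2)$ as a chaos series, bound each term using the ratio of the bridge transition density to the free heat kernel (which is $1+o(1)$ when all intermediate times are $\ll T$), and recognise the resulting sum as the geometric series that produces $2\pi/(2\pi-\beta^2)=\nu_{\mathrm{eff}}^2$. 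Uniform integrability coming from Lemma~\ref{l.exmm} then permits passing this convergence under $\hat{\E}_{-w/\eps,0}$ despite the random endpoint.

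The main obstacle is step (ii): under $\hat{\E}$ the endpoint $x_2=\X_{K_\eps}-w/\eps-y$ is typically of order $|w|/\eps$, which is large in microscopic units, and one must verify that this large endpoint does not perturb the short-time behaviour of the bridge that drives the convergence to $\nu_{\mathrm{eff}}^2$. Quantitatively this reduces to transition-density estimates on the free heat kernel $p_T$ with $T\ges \eps^{-2}$, where $p_T(x)$ varies slowly on scales $|w|/\eps$, but it is the step requiring the most care. The other ingredients (i) and (iii) are essentially soft consequences of the heuristics already presented in Section~\ref{s.heuristics}.
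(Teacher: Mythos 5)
Your decomposition (i)--(iii) follows the same skeleton as the paper, and step (i) is executed essentially identically (Lipschitz bound on $G_{2s}$, Lemma~\ref{l.holder}, and $\eps^2K_\eps\to0$). The genuine divergence is in step (ii). You propose a chaos expansion of $\cH_\eps$, term-by-term moment asymptotics for the bridge-conditioned intersection time, and resummation of the geometric series to $\nu_{\eff}^2$. The paper instead \emph{truncates the time integral} to $[0,s_\eps]$ with $s_\eps=s/(\eps^2|\log\eps|)$, conditions on $\bar B_{2s_\eps}$ so that the bridge weight appears as an explicit ratio of heat kernels, sends that ratio to $1$ via the same $\eps\X_{K_\eps}\to0$ estimate from step (i), and then invokes the Kallianpur--Robbins \emph{distributional} limit \eqref{e.wkconKR} together with uniform integrability from Lemma~\ref{l.exmm} to obtain $2\pi/(2\pi-\beta^2)$; the truncation error $|\cH_\eps-\tilde\cH_\eps|$ is then a single term, and it is here that $w\neq0$ is used, through a second-moment bridge bound (Lemma~\ref{l.295}) exploiting $G_{t-\eps^2 r}(w-\eps x)/G_t(w)\les1$ on the event $\{|\eps\X_{K_\eps}|\leq w/2\}$. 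Your chaos route is plausible and closer in spirit to \cite{caravenna2015universality}, but it does not avoid the difficulty you flag: each chaos term still needs the same late-time bridge control when the endpoint is at macroscopic distance $|w|/\eps$. The paper's truncation isolates that issue into one error term rather than into every order of the expansion, which is what keeps the argument short. One small slip: Lemma~\ref{l.i3bd} is not the dominating envelope for the inner $dx\,dy\,dw$ integral in $\G_\eps(s)$---that comes from the compact support of $g$, integrability of $R$, boundedness of $G_{2s}$ for fixed $s>0$, and the uniform bound on $\cH_\eps$ from Lemma~\ref{l.exmm}; Lemma~\ref{l.i3bd} is the envelope for the outer $ds$ integral in \eqref{e.coni3}.
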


The proof of Lemmas~\ref{l.i3re} and \ref{l.i3bd} is the same as \cite[Lemma 3.5, 3.6]{dunlap2018kpz}.

\begin{proof}[Proof of Lemma~\ref{l.i3lim}]
Recall that 
\[
\begin{aligned}
\G_\eps(s)=\int_{\R^{6}} &g(x-w)g(x) R(y) \\
&\times   \hat{\E}_{-w/\eps,0}\left[G_{2s}(w+\eps y-\eps \X_{K_\eps})\cH_\eps(\tfrac{s}{\eps^2},y,\X_{K_\eps}-\tfrac{w}{\eps}-y)\right]dxdydw.
\end{aligned}
\]
Since $s>0$ is fixed and the term  $\cH_\eps$ is uniformly bounded by Lemma~\ref{l.exmm}, the expectation in the above expression is bounded uniformly in $x,y,w,\eps$, so we only need to pass to the limit of the expectation for fixed $x,y,w\in\R^2$ and $w\neq0$. The proof is divided into three steps.

(i) We show that $\hat{\E}_{-w/\eps,0}[|G_{2s}(w+\eps y-\eps \X_{K_\eps})-G_{2s}(w)|]\to0$ as $\eps\to0$. Using the fact that $G_{2s}(\cdot)$ has bounded derivatives (for fixed $s>0$) so $|G_{2s}(x)-G_{2s}(y)|\les |x-y|$:
\[
|G_{2s}(w+\eps y-\eps \X_{K_\eps})-G_{2s}(w)|\les \eps|y|+\eps |\X_{K_\eps}|,
\]
it suffices to show $\hat{\E}_{-w/\eps,0}[|\eps \X_{K_\eps}|]\to0$. We apply Lemma~\ref{l.holder} to derive
\begin{equation}\label{e.se202}
\hat{\E}_{-w/\eps,0}[|\eps \X_{K_\eps}|] \les \sqrt{ \E_B[|\eps \X_{K_\eps}|^2]}=\sqrt{2\eps^2K_\eps}\to0.
\end{equation}

(ii) Define $s_\eps=\tfrac{s}{\eps^2|\log\eps|}$ and 
\[
  \tilde{\cH}_\eps=\E_{\bar B}\left[ e^{\beta^2_\eps \int_0^{s_\eps} R(y+{\bar B}_{2r})dr}\bigg|\bar B_{2s/\eps^2}=\X_{K_\eps}-\tfrac{w}{\eps}-y\right],
\]
we show that 
\begin{equation}\label{e.se201}
  \hat{\E}_{-w/\eps,0}[\tilde{\cH}_\eps]\to \tfrac{2\pi}{2\pi -\beta^2}, \quad \mbox{ as } \eps\to0.%\E_{\bar B}\left[e^{\beta^2 \int_0^\infty R(y+\bar B_{2s})ds}\right]
\end{equation}
 We first note that $\tilde{\cH}_\eps$ can be written more explicitly by conditioning on $\bar B_{2s_\eps}$:
\[
\begin{aligned}
  \tilde{\cH}_\eps=&\E_{\bar B}\left[ e^{\beta^2_\eps\int_0^{s_\eps}R(y+\bar B_{2r})dr} \times  \frac{G_{2s(1-|\log \eps|^{-1})}(\eps \X_{K_\eps}-w-\eps y-\eps \bar B_{2s_\eps})}{G_{2s}(\eps \X_{K_\eps}-w-\eps y)}\right]\\
  =&\frac{1}{(1-|\log \eps|^{-1})} \E_{\bar B}\left[ e^{\beta_\eps^2\int_0^{s_\eps}R(y+\bar B_{2r})dr} e^{-\frac{(\eps \X_{K_\eps}-w-\eps y-\eps \bar B_{2s_\eps})^2}{4s(1-|\log\eps|^{-1})}} e^{\frac{(\eps \X_{K_\eps}-w-\eps y)^2}{4s}}\right].
\end{aligned}
\]
There are three factors inside the above expectation. By an application of Lemma~\ref{l.holder} again and the fact that $\eps^2K_\eps\to0$ as $\eps\to0$, we have
\begin{equation}\label{e.5101}
\limsup_{\eps\to0}\hat{\E}_{-w/\eps,0}[e^{\lambda |\eps \X_{K_\eps}|^2}]\les1, %\mbox{ and }  \E_{\bar B}[ e^{\lambda\beta_\eps^2\int_0^{s_\eps}R(y+\bar B_{2r})dr} ] \les1
\end{equation}
for any $\lambda>0$. We also have $\E_{\bar B}[ e^{\lambda\beta_\eps^2\int_0^{s_\eps}R(y+\bar B_{2r})dr} ] \les1$ for $\beta<\beta(\lambda)$. Thus, by the same proof of (i) and applying H\"older's inequality, we can replace the second factor by $e^{-w^2/4s}$ with a negligible error. For the third factor, we use the inequality $|e^x-e^y|\leq (e^x+e^y)|x-y|$ so
\[
\big|e^{\frac{(\eps \X_{K_\eps}-w-\eps y)^2}{4s}}-e^{\frac{w^2}{4s}}\big|\les\big(e^{\frac{(\eps \X_{K_\eps}-w-\eps y)^2}{4s}}+e^{\frac{w^2}{4s}}\big)|\eps \X_{K_\eps}-\eps y|(|\eps \X_{K_\eps}-w-\eps y|+|w|).
\]
By the exponential moment bounds given in \eqref{e.5101} we can show the r.h.s. of the above display is small hence replace the third factor by $e^{w^2/4s}$ with a negligible error. In the end, we apply \cite[Theorem 1]{KR}, 
\[
\beta_\eps^2 \int_0^{s_\eps} R(y+\bar{B}_{2r})dr = \frac{\beta^2\log 2s_\eps}{2|\log\eps|} \frac{1}{\log 2s_\eps}\int_0^{2s_\eps} R(y+\bar{B}_{r})dr\Rightarrow \lambda_\beta\mathrm{Exp}(1), \quad \lambda_\beta =\frac{\beta^2}{2\pi}.
\]
Note that $\int_0^{s_\eps} R(y+\bar{B}_{2r})dr$ measures the ``local time'' of the planar Brownian motion near the origin, and the $2\pi$ factor in $\lambda_\beta$ comes from the two dimensional heat kernel.
Lemma~\ref{l.exmm} ensures the uniform integrability, and we pass to the limit to obtain \eqref{e.se201}.
%\[
%\hat{\E}_{-w/\eps,0}[\tilde{\cH}_\eps]\to \frac{1}{1-\lambda_\beta}
%\]
%This proves \eqref{e.se201}.

(iii) We show that 
\begin{equation}\label{e.se203}
\hat{\E}_{-w/\eps,0}[|\cH_\eps(\tfrac{s}{\eps^2},y,\X_{K_\eps}-\tfrac{w}{\eps}-y)-\tilde{\cH}_\eps|]\to0
\end{equation}
as $\eps\to0$. For the fixed $w\neq0$, define the event $A_w:=\{|\eps \X_{K_\eps}|>w/2\}$. First, we have
\[
\begin{aligned}
&|\cH(\tfrac{s}{\eps^2},y,\X_{K_\eps}-\tfrac{w}{\eps}-y)-\tilde{\cH}_\eps|\\
&=|\cH(\tfrac{s}{\eps^2},y,\X_{K_\eps}-\tfrac{w}{\eps}-y)-\tilde{\cH}_\eps|\1_{A_w}+|\cH(\tfrac{s}{\eps^2},y,\X_{K_\eps}-\tfrac{w}{\eps}-y)-\tilde{\cH}_\eps|\1_{A_w^c}\\
&\les \1_{A_w}+|\cH(\tfrac{s}{\eps^2},y,\X_{K_\eps}-\tfrac{w}{\eps}-y)-\tilde{\cH}_\eps|\1_{A_w^c},
\end{aligned}
\]
where in the last step we applied \eqref{eq:bridgeconcl} (note that $s<t$) to bound $\tilde{\cH}_\eps\leq \cH(\tfrac{s}{\eps^2},y,\X_{K_\eps}-\tfrac{w}{\eps}-y)\les 1$. Thus 
\[
\begin{aligned}
&\hat{\E}_{-w/\eps,0}[|\cH(\tfrac{s}{\eps^2},y,\X_{K_\eps}-\tfrac{w}{\eps}-y)-\tilde{\cH}_\eps|]\\
&\les \hat{\E}_{-w/\eps,0}[\1_{A_w}]+\hat{\E}_{-w/\eps,0}[|\cH(\tfrac{s}{\eps^2},y,\X_{K_\eps}-\tfrac{w}{\eps}-y)-\tilde{\cH}_\eps|\1_{A_w^c}].
\end{aligned}
\]
The first term on the r.h.s. goes to zero as $\eps\to0$ by \eqref{e.se202}. For the second term, we have
\[
\begin{aligned}
&\hat{\E}_{-w/\eps,0}[|\cH(\tfrac{s}{\eps^2},y,\X_{K_\eps}-\tfrac{w}{\eps}-y)-\tilde{\cH}_\eps|\1_{A_w^c}]\\
&\leq \hat{\E}_{-w/\eps,0} \left[ \E_B\bigg[e^{\beta_\eps^2\int_0^{s/\eps^2}R(y+\bar{B}_{2r})dr}\beta_\eps^2 \int_{s_\eps}^{s/\eps^2}R(y+\bar{B}_{2r})dr\bigg|\bar{B}_{2s/\eps^2}=\X_{K_\eps}-\tfrac{w}{\eps}-y\bigg]\1_{A_w^c}\right].
\end{aligned}
\]
The conditional expectation can be bounded using Lemma~\ref{l.exmm}: 
\begin{equation}\label{e.294}
\begin{aligned}
&\E_B\bigg[e^{\beta_\eps^2\int_0^{s/\eps^2}R(y+\bar{B}_{2r})dr}\beta_\eps^2 \int_{s_\eps}^{s/\eps^2}R(y+\bar{B}_{2r})dr\bigg|\bar{B}_{2s/\eps^2}=\X_{K_\eps}-\tfrac{w}{\eps}-y\bigg]\\
&\les \frac{1}{|\log \eps|}\sqrt{ \E_B\bigg[  |\int_{s_\eps}^{s/\eps^2}R(y+\bar{B}_{2r})dr|^2 \bigg|\bar{B}_{2s/\eps^2}=\X_{K_\eps}-\tfrac{w}{\eps}-y\bigg]}.
\end{aligned}
\end{equation}
In the event $A_w^c$, we have $|\eps \X_{K_\eps}|\leq w/2$, thus $c_1w\leq |\eps \X_{K_\eps}-w-\eps y|\leq c_2w$ for some $c_1,c_2>0$ (note that $y$ is fixed). Recall that $s_\eps=\tfrac{s}{\eps^2|\log\eps|}$, we apply Lemma~\ref{l.295} to derive 
\[
\E_B\bigg[  |\int_{s_\eps}^{s/\eps^2}R(y+\bar{B}_{2r})dr|^2 \bigg|\bar{B}_{2s/\eps^2}=\X_{K_\eps}-\tfrac{w}{\eps}-y\bigg]\les |\log \eps|(\log |\log \eps|),
\]
uniformly in $|\eps\X_{K_\eps}|\leq w/2$, so we pass to the limit in \eqref{e.294}, then obtain \eqref{e.se203}. %The proof is complete.

To summarize, we have 
\[
\begin{aligned}
  \G_\eps(s)\to& \tfrac{2\pi}{2\pi-\beta^2 }\int_{\R^{3d}} g(x-w)g(x)R(y)G_{2s}(w) dxdydw\\
  &=\nu_{\mathrm{eff}}^2 \int_{\R^{4}} g(x-w)g(x)G_{2s}(w)dxdw,
\end{aligned}
\]
which completes the proof.
\end{proof}

\subsection{Proof of Proposition~\ref{p.convar}}

Recall that $X_\eps-\E[X_\eps]=\beta_\eps(I_{1,\eps}+I_{2,\eps}+I_{3,\eps})$. We combine Lemmas~\ref{l.i1}, \ref{l.i2} and \eqref{e.coni3} to derive 
\[
\beta_\eps^{-2}\Var[X_\eps]=\E[|I_{1,\eps}+I_{2,\eps}+I_{3,\eps}|^2]\to \sigma_t^2.
\]

\section{Gaussianity}\label{s.gauss}

Recall the goal is to show 
\[
\E[\|DX_\eps\|_H^4]^{1/4} \E[\|D^2X_\eps\|_{\mathrm{op}}^4]^{1/4}=o(|\log\eps|^{-1}),  \mbox{ as } \eps\to0,
\]
where $X_\eps=\int_{\R^2} \log Z_\eps(t,x) g(x)dx$ and $H=L^2(\R^{2+1})$. Since 
\[
DX_\eps= \int_{\R^2} \frac{DZ_\eps(t,x)}{Z_\eps(t,x)}g(x)dx,
\]
we have
\[
\begin{aligned}
D^2X_\eps&=D\int_{\R^2} \frac{DZ_\eps(t,x)}{Z_\eps(t,x)} g(x)dx\\
&=\int_{\R^2} \frac{ Z_\eps (t,x)D^2Z_\eps(t,x)-DZ_\eps(t,x)\otimes DZ_\eps(t,x)}{Z^2_\eps(t,x)}g(x)dx.
\end{aligned}
\]

Using the Feynman-Kac representation \eqref{e.fkre},
\[
D^2Z_\eps(t,x)=\beta^2_\eps \E_B[ M_\eps(t,x)\Phi_{t,x,B}^\eps\otimes \Phi_{t,x,B}^\eps],
\]
so 
\[
Z_\eps(t,x)D^2Z_\eps(t,x)=\beta^2_\eps \E_{B}\left[\prod_{j=1}^2M_{\eps,j}(t,x)\Phi_{t,x,B^2}^\eps\otimes \Phi_{t,x,B^2}^\eps\right],
\]
and
\[
DZ_\eps(t,x)\otimes D Z_\eps(t,x)=\beta^2_\eps \E_{B}\left[\prod_{j=1}^2M_{\eps,j}(t,x)\Phi_{t,x,B^1}^\eps\otimes \Phi_{t,x,B^2}^\eps\right].
\]
Thus we can write 
\[
\begin{aligned}
D^2X_\eps=\beta^2_\eps\int_{\R^2}  \frac{ \E_{B}[\prod_{j=1}^2M_{\eps,j}(t,x)(\Phi_{t,x,B^2}^\eps-\Phi_{t,x,B^1}^\eps)\otimes \Phi_{t,x,B^2}^\eps]}{Z_\eps^2(t,x)} g(x)dx=\cP_2-\cP_1,
%=&\beta^2\int_{\R^d} \what{\E}_{V,\eps}[(\Phi_{t,x,B^2}^\eps-\Phi_{t,x,B^1}^\eps)\otimes \Phi_{t,x,B^2}^\eps] g(x)dx
\end{aligned}
\]
 where 
\[
H\otimes H\ni\cP_k=\beta^2_\eps\int_{\R^2}  \frac{ \E_{B}[\prod_{j=1}^2M_{\eps,j}(t,x)\Phi_{t,x,B^k}^\eps\otimes \Phi_{t,x,B^2}^\eps]}{Z^2_\eps(t,x)} g(x)dx.
\]
Thus,
\[
\|D^2X_\eps\|_{\mathrm{op}}^4\les \|\cP_1\|_{\mathrm{op}}^4+\|\cP_2\|_{\mathrm{op}}^4,
\]
and we only need to estimate $\E[\|\cP_k\|_{\mathrm{op}}^4]$, $k=1,2$.

\subsection{The first derivative}
\begin{lemma}\label{l.1stde}
For any $\delta>0$, there exists $\beta(\delta)>0$ such that if $\beta<\beta(\delta)$, 
\[
\E[\|DX_\eps\|_H^4]^{1/4} \les |\log \eps|^{-\frac12+\delta}.
\]
\end{lemma}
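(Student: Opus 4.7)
The plan is to express $\|DX_\eps\|_H^2$ as an intersection-time integral via Feynman--Kac, bound its fourth moment using Lemma~\ref{l.holder}, and then extract the required $|\log\eps|^{-1/2+\delta}$ decay by $L^p$-interpolating the Brownian intersection time $\cR$ between its $L^1$ and $L^2$ norms under $\Pb_B$.

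First, since $D_{s,y} Z_\eps(t,x) = \beta_\eps \E_B[M_\eps(t,x)\, \Phi^\eps_{t,x,B}(s,y)]$ and $\int \varphi(z_1-y)\varphi(z_2-y)\, dy = R(z_1 - z_2)$, a direct computation gives
\[
\|DX_\eps\|_H^2 = \beta_\eps^2 \int_{\R^4} g(x_1) g(x_2) \frac{\E_B\bigl[\prod_{j=1}^2 M_{\eps,j}(t,x_j)\, \cR_{12}\bigr]}{Z_\eps(t,x_1) Z_\eps(t,x_2)}\, dx_1 dx_2,
\]
with $\cR_{12} := \cR(t/\eps^2,(x_1-x_2)/\eps,B^1,B^2)$. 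Squaring, taking $\E$, and introducing two further independent Brownian motions $B^3, B^4$ (with $\cR_{34}$ defined analogously), Lemma~\ref{l.holder} applied to the resulting product of four ratios, together with the independence of $(B^1,B^2)$ from $(B^3,B^4)$, yields
\[
\E[\|DX_\eps\|_H^4] \les \beta_\eps^4 \left(\int_{\R^4} |g(x_1) g(x_2)|\, \E_B[\cR_{12}^q]^{1/q}\, dx_1 dx_2\right)^2
\]
for any $q>1$, provided $\beta<\beta(q)$.

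The decisive step is the interpolation, parallel to the one in Lemma~\ref{l.292}: I would pick $\theta>0$ small with $q = 2/(2-\theta)$ so that $\E_B[\cR_{12}^q]^{1/q} \les \E_B[\cR_{12}]^{1-\theta} \E_B[\cR_{12}^2]^{\theta/2}$. Changing variable $w = x_1 - x_2$ (and writing $G(w) := \int |g(x) g(x-w)|\,dx$, which is bounded and compactly supported) and then applying Hölder in $w$ with exponents $1/(1-\theta)$ and $1/\theta$, the squared integral above is controlled by
\[
\left(\int_{\R^2} G(w)\, \E_B[\cR(\tfrac{t}{\eps^2}, \tfrac{w}{\eps}, B^1, B^2)]\, dw\right)^{2(1-\theta)} \left(\int_{\R^2} G(w)\, \E_B[\cR(\tfrac{t}{\eps^2}, \tfrac{w}{\eps}, B^1, B^2)^2]^{1/2}\, dw\right)^{2\theta}.
\]
For the first factor, the explicit formula $\E_B[\cR(t/\eps^2, w/\eps,B^1,B^2)] = \int_0^t \int R(y) G_{2u}(\eps y - w)\, dy\, du$ (obtained after rescaling $s = u/\eps^2$) combined with $\int_{\R^2} G(w) G_{2u}(\eps y - w)\, dw \leq \|G\|_\infty$ produces an $O(1)$ bound uniformly in $\eps$. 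For the second factor, Cauchy--Schwarz together with the analogue at time horizon $t/\eps^2$ of Lemma~\ref{l.291} gives $\int G(w) \E_B[\cR^2]\, dw \les |\log\eps|$, hence $\int G(w) \E_B[\cR^2]^{1/2}\, dw \les |\log\eps|^{1/2}$. Combining, $\E[\|DX_\eps\|_H^4] \les \beta_\eps^4 |\log\eps|^{\theta} = |\log\eps|^{-2+\theta}$, and the claim follows by setting $\theta = 4\delta$.

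The main obstacle I anticipate is the $L^1$ control: the straightforward choice $q=2$ in Lemma~\ref{l.holder} would only yield $\E[\|DX_\eps\|_H^4]^{1/4} \les |\log\eps|^{-1/4}$, insufficient to drive the right-hand side of the second-order Poincar\'e bound \eqref{e.2ndpoincare} to zero. The interpolation only pays off because $\int G(w)\,\E_B[\cR(t/\eps^2, w/\eps, B^1, B^2)]\, dw$ remains bounded uniformly in $\eps$---a cancellation specific to the 2D heat-kernel structure and the normalization $\int R = 1$---and verifying this uniform $L^1$ bound is the substantive computation behind the proof.
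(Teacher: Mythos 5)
Your proof is correct and follows essentially the same route as the paper: compute $\|DX_\eps\|_H^4$ via the Feynman--Kac representation, apply Lemma~\ref{l.holder} to remove the $Z_\eps^{-1}$ factors, and then $L^1$--$L^2$ interpolate the intersection time $\cR$ using the bounds of Lemma~\ref{l.291}. The only cosmetic difference is that the paper interpolates directly in the joint measure $|g(x_1)g(x_2)|\,dx\otimes d\Pb_B$ (see \eqref{e.304}), whereas you interpolate $\E_B[\cR^q]^{1/q}$ pointwise in $w$ and then apply H\"older in $w$; the two manipulations are equivalent and yield the same exponent.
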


\begin{proof}
A direct calculation gives
\[
\begin{aligned}
\|DX_\eps\|_H^4=\beta^4_\eps\int_{\R^{8}} \prod_{j=1}^4 \frac{g(x_j)}{Z_\eps(t,x_j)}  \E_{B}\left[ \prod_{j=1}^4 M_{\eps,j}(t,x_j) \cR(\tfrac{t}{\eps^2},\tfrac{x_1-x_2}{\eps},B^1,B^2)\cR(\tfrac{t}{\eps^2},\tfrac{x_3-x_4}{\eps},B^3,B^4)\right]dx,
\end{aligned}
\]
with $\cR$ defined in \eqref{e.defcR}. Taking the expectation and applying Lemma~\ref{l.holder}, we have
%The rest of the proof is similar to that of Lemmas~\ref{l.i1} and \ref{l.i2}. Taking the expectation $\E$, using the negative moments of $Z_\eps$, and applying H\"older's inequality, we arrive at 
\[
\begin{aligned}
\beta_\eps^{-4}\E[\|DX_\eps\|_H^4] \les& \int_{\R^{8}} \prod_{j=1}^4 |g(x_j)| \E_B\left[\cR^q(\tfrac{t}{\eps^2},\tfrac{x_1-x_2}{\eps},B^1,B^2)\cR^q(\tfrac{t}{\eps^2},\tfrac{x_3-x_4}{\eps},B^3,B^4)\right]^{1/q}dx\\
\les& \left(\int_{\R^4} |g(x_1)g(x_2)|\E_B[ \cR^q(\tfrac{t}{\eps^2},\tfrac{x_1-x_2}{\eps},B^1,B^2)]dx\right)^{2/q}.%& \eps^{\frac{2(d-2)}{q}-\frac{4}{p}}\int_{\R^{4d}} \prod_{j=1}^4 |g(x_j)|  \frac{1}{|x_1-x_2|^{\frac{d-2}{q}}}\frac{1}{|x_3-x_4|^{\frac{d-2}{q}}}dx \les  \eps^{\frac{2(d-2)}{q}-\frac{4}{p}},
\end{aligned}
\]
We can view the factor $|g(x_1)g(x_2)|$ as a weight (without loss of generality assume $\int |g|=1$), so the integral
\[
\int_{\R^4} |g(x_1)g(x_2)|\E_B[ \cR^q(\tfrac{t}{\eps^2},\tfrac{x_1-x_2}{\eps},B^1,B^2)]dx
\]
can be viewed as an expectation of $\cR^q(\tfrac{t}{\eps^2},\tfrac{x_1-x_2}{\eps},B^1,B^2)$ with $x_1,x_2$ independently sampled from the density $|g|$. Therefore, by the $L^p-$interpolation inequality and arguing similarly as \eqref{e.302}, we have 
\begin{equation}\label{e.304}
\begin{aligned}
\big(\int_{\R^4} |g(x_1)g(x_2)|\E_B[ &\cR^q(\tfrac{t}{\eps^2},\tfrac{x_1-x_2}{\eps},B^1,B^2)]dx\big)^{1/q}\\
\les& \big(\int_{\R^4} |g(x_1)g(x_2)|\E_B[ \cR(\tfrac{t}{\eps^2},\tfrac{x_1-x_2}{\eps},B^1,B^2)]dx\big)^{1-\theta}\\
&\times \big(\int_{\R^4} |g(x_1)g(x_2)|\E_B[ \cR^2(\tfrac{t}{\eps^2},\tfrac{x_1-x_2}{\eps},B^1,B^2)]dx\big)^{\theta/2}
\end{aligned}
\end{equation}
for $\theta=2-\tfrac{2}{q}$. Applying Lemma~\ref{l.291}, we know that the first factor on the r.h.s. is uniformly bounded and the second factor  is bounded by $|\log \eps|^{\theta/2}$. Thus, 
\[
\E[\|DX_\eps\|_H^4] \les \tfrac{|\log \eps|^{\theta}}{|\log\eps|^2}.
\]
By choosing $q$ sufficiently close to $1$, we can make $\theta$ arbitrarily small, which completes the proof.
\end{proof}

\subsection{The second derivative}

To estimate $\|\cP_k\|_{\mathrm{op}}$, we use the contraction inequality \cite[Proposition 4.1]{nourdin2009second}, which says that 
\[
\|\cP_k\|_{\mathrm{op}}^4\leq \|\cP_k\otimes_1\cP_k\|_{H\otimes H}^2.
\]
Here $\cP_k\otimes_1\cP_k$ is the random element of $H\otimes H$ obtained as the contraction of the symmetric
random tensor $\cP_k$. Recall that $H=L^2(\R^{2+1})$, we can write the r.h.s. of the above inequality as 
\[
\begin{aligned}
&\|\cP_k\otimes_1\cP_k\|_{H\otimes H}^2\\
&=\int_{\R^{2+1}}\int_{\R^{2+1}} \left(\int_{\R^{2+1}}\cP_k(s_1,y_1,s',y')\cP_k(s_2,y_2,s',y')ds'dy'\right)^2 ds_1dy_1ds_2dy_2.
\end{aligned}
\]

\subsubsection{The case $k=1$.} A direct calculation gives 
\[
\begin{aligned}
&\cP_1\otimes_1\cP_1\\
&=\beta^4_\eps\int_{\R^{4}} \frac{\E_B[ \prod_{j=1}^4 M_{\eps,j}(t,x_j) \cR(\frac{t}{\eps^2},\frac{x-y}{\eps},B^1,B^3)\Phi^\eps_{t,x,B^2}\otimes \Phi^\eps_{t,y,B^4}]}{Z_\eps^2(t,x)Z_\eps^2(t,y)} g(x)g(y)dxdy,
\end{aligned}
\]
where we write $x_1=x_2=x,x_3=x_4=y$ to simplify the notations. Thus, 
\[
\begin{aligned}
\|\cP_1\otimes_1\cP_1\|_{H\otimes H}^2=\beta^8_\eps\int_{\R^{8}}&g(x)g(y)g(z)g(w)\left(\prod_{j=1}^8 Z_\eps(t,x_j)\right)^{-1} \\
&\times \E_B\left[ \prod_{j=1}^8 M_{\eps,j}(t,x_j)\prod_{(i,k)\in \mathcal{O}} \cR(\tfrac{t}{\eps^2},\tfrac{x_i-x_k}{\eps},B^i,B^k) \right]dxdydzdw,
\end{aligned}
\]
where $x_5=x_6=z, x_7=x_8=w$, and the set $\mathcal{O}$ is $\mathcal{O}=\{(1,3),(5,7),(2,6),(4,8)\}$.
\begin{lemma}\label{l.p1}
For any $\delta>0$, there exists $\beta(\delta)$ such that if $\beta<\beta(\delta)$,  
\[
\E[\|\cP_1\otimes_1\cP_1\|_{H\otimes H}^2]\les |\log\eps|^{-4+\delta}.%\eps^{4d-8-\delta}\1_{d<8} +\eps^{3d-8-\delta}\1_{d\geq 8}.
\]
\end{lemma}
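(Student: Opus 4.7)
I would bound $\E[\|\cP_1\otimes_1\cP_1\|_{H\otimes H}^2]$ by first reducing to a purely Brownian quantity via Lemma~\ref{l.holder}, then decoupling the Brownian pairings via Cauchy--Schwarz, and closing via $L^p$-interpolation. Taking $\E$ of the displayed expression for $\|\cP_1\otimes_1\cP_1\|_{H\otimes H}^2$ and applying Lemma~\ref{l.holder} with $n=8$ and $F=\prod_{(i,k)\in\mathcal{O}}\cR(\tfrac{t}{\eps^2},\tfrac{x_i-x_k}{\eps},B^i,B^k)$, the key structural observation is that $\mathcal{O}=\{(1,3),(5,7),(2,6),(4,8)\}$ is a perfect matching of $\{1,\ldots,8\}$: the four $\cR$-factors involve disjoint pairs of Brownian motions, so by independence $\E_B[F^q]^{1/q}=\prod_{(i,k)\in\mathcal{O}}\E_B[\cR(\tfrac{t}{\eps^2},\tfrac{x_i-x_k}{\eps},B^i,B^k)^q]^{1/q}$. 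Setting $\phi_\eps(v):=\E_B[\cR(\tfrac{t}{\eps^2},v/\eps,B^1,B^2)^q]^{1/q}$ (which depends only on $v$ by translation invariance), the problem reduces to bounding
\[
J_\eps := \int_{\R^8}|g(x)g(y)g(z)g(w)|\,\phi_\eps(x-y)\phi_\eps(z-w)\phi_\eps(x-z)\phi_\eps(y-w)\,dxdydzdw.
\]

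\textbf{Decoupling and interpolation.} The spatial pairings $\{(x,y),(z,w),(x,z),(y,w)\}$ form a $4$-cycle whose two perfect matchings $\{(x,y),(z,w)\}$ and $\{(x,z),(y,w)\}$ each consist of vertex-disjoint edges. Splitting the weight $|g(x)g(y)g(z)g(w)|^{1/2}$ symmetrically between the two matchings and applying Cauchy--Schwarz on $L^2(\R^8)$, Fubini gives
\[
J_\eps \le \Big(\int|g(x)g(y)|\phi_\eps(x-y)^2\,dxdy\Big)^2.
\]
Now I invoke moment interpolation: for $q\in(1,2)$, Hölder for $\E_B$ yields $\phi_\eps(v)^2 \le \E_B[\cR]^{2(2-q)/q}\E_B[\cR^2]^{2(q-1)/q}$; applying Hölder in $(x,y)$ with conjugate exponents $p_1=q/(2-q)$ and $p_2=q/(2(q-1))$ (so that the inner powers collapse to $2$ and $1$) then produces
\[
\int|g||g|\phi_\eps^2\,dxdy \lesssim \Big(\int|g(x)g(y)|\E_B[\cR]^2\,dxdy\Big)^{(2-q)/q}\Big(\int|g(x)g(y)|\E_B[\cR^2]\,dxdy\Big)^{2(q-1)/q}.
\]
The second factor is $\lesssim |\log\eps|^{2(q-1)/q}$ by Lemma~\ref{l.291}.

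\textbf{Main obstacle and conclusion.} The decisive new estimate is the $\eps$-uniform bound $\int|g(x)g(y)|\E_B[\cR(\tfrac{t}{\eps^2},(x-y)/\eps,B^1,B^2)]^2\,dxdy \lesssim 1$, which is sharper than what one obtains by Jensen and Lemma~\ref{l.291}, and which I expect to be the heart of the proof. I would establish it by writing $\E_B[\cR(T,w)]=\int_0^T(R\ast G_{2s})(w)\,ds$, squaring, rescaling $w=\eps w'$, and using Plancherel:
\[
\int(R\ast G_{2s_1})(w')(R\ast G_{2s_2})(w')\,dw' = \int|\widehat{R}(\xi)|^2 e^{-(s_1+s_2)|\xi|^2}\,d\xi \lesssim (1+s_1+s_2)^{-1};
\]
integrating $(s_1,s_2)\in[0,t/\eps^2]^2$ yields $O(t/\eps^2)$, and the $\eps^2$ from the rescaling restores the claimed $O(1)$ bound. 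Assembling the pieces, $\int|g||g|\phi_\eps^2\lesssim|\log\eps|^{2(q-1)/q}$, hence $J_\eps\lesssim|\log\eps|^{4(q-1)/q}$ and $\E[\|\cP_1\otimes_1\cP_1\|_{H\otimes H}^2]\lesssim\beta_\eps^8 J_\eps\lesssim|\log\eps|^{-4+4(q-1)/q}$; given $\delta>0$, pick $q>1$ with $4(q-1)/q<\delta$ and take $\beta(\delta):=\beta(8,q)$ from Lemma~\ref{l.holder}.
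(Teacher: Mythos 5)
Your proof is correct, and it reaches the same exponent $4(q-1)/q = 2\theta$ as the paper, but by a structurally different decoupling. The paper first applies Jensen in the spatial variables (using $|g|^{\otimes 4}$ as a weight and concavity of $t\mapsto t^{1/q}$) to move the $1/q$-th power outside, obtaining $a_q:=\bigl(\int |g|^{\otimes 4}\,\E_B[\prod\cR^q]\bigr)^{1/q}$, and then interpolates $a_q\le a_1^{1-\theta}a_2^\theta$ with $\theta=2-2/q$; the bounds $a_1\les 1$ and $a_2\les|\log\eps|^2$ follow from independence of the disjoint Brownian pairs and the pointwise estimate~\eqref{e.pointwiseinequality}. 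You instead factor $\E_B[F^q]^{1/q}$ into $\prod\phi_\eps$ using the perfect-matching structure of $\mathcal{O}$, split the $4$-cycle $\{(x,y),(z,w),(x,z),(y,w)\}$ into its two perfect matchings via Cauchy--Schwarz to get $J_\eps\le\bigl(\int|g(x)g(y)|\phi_\eps(x-y)^2\,dx\,dy\bigr)^2$, and then interpolate $\phi_\eps^2$ between the first and second moments. Both are valid; yours is arguably a cleaner exploitation of the pairing structure, while the paper's Jensen trick is more uniform across Lemmas~\ref{l.1stde}, \ref{l.p1} and~\ref{l.p2} (in the latter, $\tilde{\mathcal{O}}$ is not a matching and your Cauchy--Schwarz factorization would not apply directly, so the paper opts for a single template).

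One minor inefficiency: you flag $\int|g(x)g(y)|\E_B[\cR(\tfrac{t}{\eps^2},\tfrac{x-y}{\eps},B^1,B^2)]^2\,dx\,dy\les 1$ as a ``decisive new estimate'' requiring a Plancherel computation, but it follows immediately from the pointwise bound~\eqref{e.pointwiseinequality} with $n=1$, which gives $\E_B[\cR(\tfrac{t}{\eps^2},\tfrac{x}{\eps},B^1,B^2)]\les 1+|\log|x||$ uniformly in $\eps$; squaring and integrating the locally $L^1$ weight $(1+|\log|\cdot||)^2$ against the compactly supported $|g|^{\otimes 2}$ gives the $O(1)$ bound. (What is too weak is applying Jensen $\E_B[\cR]^2\le\E_B[\cR^2]$ and then~\eqref{e.basicinequality}, which would cost an extra $|\log\eps|$; but Lemma~\ref{l.291} contains the stronger pointwise information you need.) Your Plancherel argument is correct, just redundant.
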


\begin{proof}
Applying Lemma~\ref{l.holder}, we have 
\[
\begin{aligned}
&\beta_\eps^{-8}\E[\|\cP_1\otimes_1\cP_1\|_{H\otimes H}^2] \\
&\les \int_{\R^{8}} |g(x)g(y)g(z)g(w)| \,\E_B\big[\prod_{(i,k)\in \mathcal{O}} \cR^q(\tfrac{t}{\eps^2},\tfrac{x_i-x_k}{\eps},B^i,B^k)\big]^{1/q} dxdydzdw\\
&\les  \left(\int_{\R^{8}} |g(x)g(y)g(z)g(w)|\, \E_B\big[\prod_{(i,k)\in \mathcal{O}} \cR^q(\tfrac{t}{\eps^2},\tfrac{x_i-x_k}{\eps},B^i,B^k)\big] dxdydzdw\right)^{1/q}=:a_q
\end{aligned}
\]
for some $q>1$ and we used the simplified notation $a_q$. Again we apply the $L^p-$interpolation inequality as in \eqref{e.304}, with $\theta=2-\tfrac{2}{q}$, we have
\[
a_q\leq a_1^{1-\theta}a_2^\theta.
\]
Note that the process $(B^i,B^k)$ are independent for different pairs of $(i,k)\in \mathcal{O}$. Applying Lemma~\ref{l.291} yields
\[
\begin{aligned}
&\E_B\big[\prod_{(i,k)\in \mathcal{O}} \cR(\tfrac{t}{\eps^2},\tfrac{x_i-x_k}{\eps},B^i,B^k)\big] \les \prod_{(i,k)\in\mathcal{O}}(1+|\log |x_i-x_k|| ),\\
&\E_B\big[\prod_{(i,k)\in \mathcal{O}} \cR^2(\tfrac{t}{\eps^2},\tfrac{x_i-x_k}{\eps},B^i,B^k)\big] \les |\log \eps|^4\prod_{(i,k)\in\mathcal{O}}(1+|\log |x_i-x_k||).
\end{aligned}
\]
Thus we have $a_1\les 1$ and $a_2\les |\log\eps|^2$, which implies $a_q\les |\log \eps|^{2\theta}$. By choosing $q$ sufficiently close to $1$ so that $\theta$ is sufficiently close to $0$, we complete the proof. 
\end{proof}

\subsubsection{The case $k=2$.} In this case, 
\[
\cP_2= \beta^2_\eps\int_{\R^2} \frac{\E_B[ M_\eps(t,x)\Phi^\eps_{t,x,B}\otimes \Phi^\eps_{t,x,B}]}{Z_\eps(t,x)}g(x)dx,
\]
so 
\[
\begin{aligned}
&\cP_2\otimes_1\cP_2\\
&= \beta^4_\eps\int_{\R^{4}} \frac{\E_B[ \prod_{j=1}^2 M_{\eps,j}(t,x_j) \cR(\frac{t}{\eps^2},\frac{x_1-x_2}{\eps},B^1,B^2)\Phi^\eps_{t,x_1,B^1}\otimes \Phi^\eps_{t,x_2,B^2}]}{Z_\eps(t,x_1)Z_\eps(t,x_2)} g(x_1)g(x_2)dx_1dx_2,
\end{aligned}
\]
and
\[
\|\cP_2\otimes_1 \cP_2\|_{H\otimes H}^2=\beta^8_\eps \int_{\R^{8}} \prod_{j=1}^4 \frac{g(x_j)}{Z_\eps(t,x_j)} \E_B\big[ \prod_{j=1}^4 M_{\eps,j}(t,x_j) \prod_{(i,k)\in \tilde{\mathcal{O}}} \cR(\tfrac{t}{\eps^2},\tfrac{x_i-x_k}{\eps},B^i,B^k)\big]dx,
\]
with the set $\tilde{\mathcal{O}}=\{(1,2),(3,4),(1,3),(2,4)\}$.

\begin{lemma}\label{l.p2}
For any $\delta>0$, there exists $\beta(\delta)$ such that if $\beta<\beta(\delta)$, 
\[
\E[\|\cP_2\otimes_1 \cP_2\|_{H\otimes H}^2]\les |\log\eps|^{-3+\delta}.%\eps^{3(d-2)-\delta}.
\]
\end{lemma}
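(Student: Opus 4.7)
The strategy follows the template of Lemma~\ref{l.p1}: expand $\|\cP_2\otimes_1\cP_2\|_{H\otimes H}^2$ as in the preamble to the lemma, bound the $\Omega$-expectation using Lemma~\ref{l.holder} applied to the four-Brownian-motion product, push the $L^{1/q}$ outside the $|g|^{\otimes 4}$-integration by Jensen's inequality, and then invoke the $L^p$-interpolation $a_q\leq a_1^{1-\theta}a_2^\theta$ with $\theta=2-2/q$. Concretely, this produces, for any fixed $q>1$ (with $\beta$ small depending on $q$),
\[
\beta_\eps^{-8}\E\big[\|\cP_2\otimes_1 \cP_2\|_{H\otimes H}^2\big]
\les a_q, \qquad a_q:=\bigg(\int_{\R^{8}}\prod_{j=1}^4 |g(x_j)|\,\E_B\Big[\prod_{(i,k)\in\tilde{\mathcal{O}}}\cR^q\big(\tfrac{t}{\eps^2},\tfrac{x_i-x_k}{\eps},B^i,B^k\big)\Big]dx\bigg)^{1/q},
\]
exactly as in the $\cP_1$ case.

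The decisive new feature is that the index set $\tilde{\mathcal{O}}=\{(1,2),(3,4),(1,3),(2,4)\}$ is a $4$-cycle on four Brownian motions, whereas $\mathcal{O}$ was a disjoint matching on eight; consequently the four intersection times $\cR(i,k)$ are no longer mutually independent. To handle this, I would exploit the decomposition of $\tilde{\mathcal{O}}$ into two disjoint sub-matchings, $M_1:=\{(1,2),(3,4)\}$ and $M_2:=\{(1,3),(2,4)\}$, each of which consists of pairs that share \emph{no} Brownian motion. Applying Cauchy--Schwarz across these two sub-matchings gives
\[
\E_B\Big[\prod_{(i,k)\in\tilde{\mathcal{O}}}\cR^q(i,k)\Big]\leq \E_B\Big[\prod_{(i,k)\in M_1}\cR^{2q}(i,k)\Big]^{1/2}\E_B\Big[\prod_{(i,k)\in M_2}\cR^{2q}(i,k)\Big]^{1/2},
\]
and within each of the two resulting factors the pairs involve disjoint Brownian motions, so the expectation factorizes as a product of two independent $\E_B[\cR^{2q}(i,k)]$ moments. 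The $x$-integration then factorizes pair by pair as well, and each one-pair integral of the form $\int |g(x_i)g(x_k)|\E_B[\cR^{2q}(t/\eps^2,(x_i-x_k)/\eps,B^i,B^k)]\,dx_i dx_k$ is controlled by Lemma~\ref{l.291}.

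Carrying this out, one finds that the cyclic structure of $\tilde{\mathcal{O}}$ costs exactly one extra factor of $|\log\eps|$ compared with the disjoint matching $\mathcal{O}$: the exchange from $\E_B[\cR]$ to $\E_B[\cR^2]^{1/2}$ gains roughly $\sqrt{|\log\eps|}$ per pair on average against $|g|^{\otimes 2}$, and the two Cauchy--Schwarz splits produce two such exchanges in total. Combining this with the prefactor $\beta_\eps^8\sim|\log\eps|^{-4}$ and choosing $q$ (hence $\theta$) sufficiently close to $1$ so that the interpolation loss is arbitrarily small yields the advertised $|\log\eps|^{-3+\delta}$. The principal technical obstacle is calibrating the Cauchy--Schwarz splitting so that only one factor of $|\log\eps|$ is lost: a naive repeated Cauchy--Schwarz easily costs two or more extra factors of $|\log\eps|$, which would weaken the bound below what is needed. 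Once this balance is struck, the remaining arithmetic mirrors the end of the proof of Lemma~\ref{l.p1}.
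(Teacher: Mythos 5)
Your setup (Lemma~\ref{l.holder}, Jensen, and the interpolation $a_q\leq a_1^{1-\theta}a_2^{\theta}$) matches the paper, but the way you propose to handle the $4$-cycle $\tilde{\mathcal{O}}$ contains a counting error that loses exactly the power of $|\log\eps|$ on which the lemma hinges. After your Cauchy--Schwarz across the two matchings $M_1,M_2$, \emph{every one} of the four pairs ends up squared: writing $\cR_{ik}=\cR(\tfrac{t}{\eps^2},\tfrac{x_i-x_k}{\eps},B^i,B^k)$, you get $\E_B[\cR_{12}\cR_{34}\cR_{13}\cR_{24}]\leq\prod_{(i,k)\in\tilde{\mathcal{O}}}\E_B[\cR_{ik}^2]^{1/2}$, i.e.\ four exchanges of $\E_B[\cR]$ for $\E_B[\cR^2]^{1/2}$, not two. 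Since against the $g$-weights Lemma~\ref{l.291} gives $\int|g(x_1)g(x_2)|\E_B[\cR^2]\,dx_1dx_2\les|\log\eps|$ while $\int|g(x_1)g(x_2)|\E_B[\cR]\,dx_1dx_2\les1$, the best this route yields is $a_1\les|\log\eps|^{2}$ (a second Cauchy--Schwarz in $x$, or the pointwise bound $\E_B[\cR_{ik}^2]^{1/2}\les|\log\eps|^{1/2}(1+|\log|x_i-x_k||)^{1/2}$, both land there). No recalibration of the splitting helps: any H\"older split across the two matchings with conjugate exponents $(p,p')$ costs, up to the harmless logarithmic factors in $|x_i-x_k|$, $|\log\eps|^{2(1-1/p)+2(1-1/p')}=|\log\eps|^{2}$, because the homogeneity $\E_B[\cR^n]\sim|\log\eps|^{n-1}$ is fixed. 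With $a_1\les|\log\eps|^{2}$ and $\theta$ small you only reach $\E[\|\cP_2\otimes_1\cP_2\|_{H\otimes H}^2]\les|\log\eps|^{-2+\delta}$, which neither proves the stated bound nor suffices downstream: the proof of Proposition~\ref{p.gauss} needs $\E[\|\cP_2\otimes_1\cP_2\|_{H\otimes H}^2]^{1/4}=o(|\log\eps|^{-1/2})$, and $|\log\eps|^{-2+\delta}$ sits on the losing side of that threshold. (Also, your claim that the $x$-integration ``factorizes pair by pair'' after one Cauchy--Schwarz is not literally true, since each of the two factors still couples all four $x_j$; one needs a further Cauchy--Schwarz in $x$ or pointwise bounds, which is where the $|\log\eps|^2$ above materializes.)

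The missing ingredient is the paper's Lemma~\ref{l.bin}: a Fourier-analytic estimate of the \emph{first} moment $a_1$ for the cyclic product, using $\hat f,\hat h$ and the Gaussian characteristic functions of the Brownian increments around the cycle, which yields $a_1\les|\log\eps|$ --- one full power better than what Cauchy--Schwarz combined with the moment bounds of Lemma~\ref{l.291} can ever give. The crude part of the argument is reserved for $a_2$ (H\"older with exponents $4,8,16,16$ and the pointwise bounds of Lemma~\ref{l.291}), which is harmless because $a_2$ enters only with the small interpolation weight $\theta$. So the correct division of labor is: sharp Fourier estimate for $a_1$, crude moment estimate for $a_2$; your proposal replaces the former by a splitting argument that cannot reach the required $|\log\eps|$ bound, and the heuristic ``only two exchanges'' bookkeeping is where the argument breaks.
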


\begin{proof}
By Lemma~\ref{l.holder} and the fact that $g$ is compactly supported, we have 
\[
\begin{aligned}
\beta_\eps^{-8}\E[\|\cP_2\otimes_1 \cP_2\|_{H\otimes H}^2] \les &\int_{\R^{8}} \prod_{j=1}^4 |g(x_j)| \,\E_B\big[ \prod_{(i,k)\in \tilde{\mathcal{O}}} \cR^q(\tfrac{t}{\eps^2},\tfrac{x_i-x_k}{\eps},B^i,B^k)\big]^{1/q}dx\\
\les &\left(\int_{\R^{8}} \prod_{j=1}^4|g(x_j)|\,\E_B\big[   \prod_{(i,k)\in \tilde{\mathcal{O}}} \cR^q(\tfrac{t}{\eps^2},\tfrac{x_i-x_k}{\eps},B^i,B^k)\big] dx\right)^{1/q}=:a_q.
\end{aligned}
\]
Arguing in the same way as in the proof of Lemma~\ref{l.p1}, we have $a_q\leq a_1^{1-\theta}a_2^\theta$ with $\theta=2-\tfrac{2}{q}$. By Lemma~\ref{l.bin}, we know that $a_1\leq |\log\eps|$. For $a_2$, to simplify the notation we write
\[\prod_{(i,k)\in \tilde{\mathcal{O}}} \cR^2(\tfrac{t}{\eps^2},\tfrac{x_i-x_k}{\eps},B^i,B^k)=\cR_1^2\cR_2^2\cR_3^2\cR_4^2,
\]
with $\cR_j$ denoting $\cR(\tfrac{t}{\eps^2},\tfrac{x_i-x_k}{\eps},B^i,B^k)$ for different $(i,k)\in \tilde{\mathcal{O}}$. Applying H\"older inequality and Lemma~\ref{l.291}, we derive 
\[
\begin{aligned}
\E_B\big[\prod_{(i,k)\in \tilde{\mathcal{O}}} \cR^2(\tfrac{t}{\eps^2},\tfrac{x_i-x_k}{\eps},B^i,B^k)\big] \leq &\|\cR_1\|_4^2\|\cR_2\|_8^2\|\cR_3\|_{16}^2\|\cR_4\|_{16}^2\\
\les &|\log\eps|^{2(\frac34+\frac78+\frac{15}{16}+\frac{15}{16})}\prod_{(i,k)\in\tilde{\mathcal{O}}}(1+|\log|x_i-x_k||)^{\alpha_{i,k}}
\end{aligned}
\]
for some $\alpha_{i,k}>0$. After integration in $x_j$, we have $a_2\les|\log\eps|^{\frac34+\frac78+\frac{15}{16}+\frac{15}{16}}$. Thus, by choosing $q$ sufficiently close to $1$, the proof is complete.
\end{proof}

\begin{lemma}\label{l.bin}
Assume $0\leq f,h\in\C_c^\infty(\R^2)$, then
\[
 \int_{\R^{8}}  \E_B\left[ \prod_{j=1}^4f(x_j) \prod_{(i,k)\in \tilde{\mathcal{O}}} \int_0^{t/\eps^2} h(\tfrac{x_i-x_k}{\eps}+B^i_s-B^k_s)ds\right]dx \les |\log \eps|.%\eps^{3(d-2)}.
 \]
\end{lemma}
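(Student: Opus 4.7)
The plan is to reduce the 8-fold spatial integral to a single scalar Gaussian expectation by exploiting the $4$-cycle structure of $\tilde{\mathcal{O}}=\{(1,2),(3,4),(1,3),(2,4)\}$, and then to control the resulting time integral with a crude 2D heat-kernel bound.

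First I would substitute $y_j:=x_j/\eps$ (which produces a Jacobian factor $\eps^{8}$ and removes the $\eps^{-1}$ from inside $h$) and parametrize the $y$-variables by $(y_1,\eta,\omega,\zeta)$ with $\eta:=y_1-y_2$, $\omega:=y_1-y_3$, $\zeta:=y_3-y_4$; the remaining difference is determined by $y_2-y_4=\omega+\zeta-\eta$, so the four arguments of $h$ are all affine in $(\eta,\omega,\zeta)$ up to the Brownian increments. Since $f$ is compactly supported, integrating out $y_1$ yields a factor $\lesssim\eps^{-2}$ uniformly in $(\eta,\omega,\zeta)$.

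Next I apply Fubini to bring the four time integrals $\int_0^{t/\eps^2}\!\!\cdot\,ds_i$ outside, and for fixed $(s_1,s_2,s_3,s_4)$ and fixed Brownian paths I integrate successively in $\eta$, then $\zeta$, then $\omega$. At each step exactly two of the four $h$-factors depend on the current variable, so each integration reduces to a convolution of two copies of $h$. The end result is $H_{3}(\Delta)$, where $H_3\in L^1\cap L^\infty(\R^2)$ is a fourfold convolution of translates of $h$ (hence $\|H_3\|_\infty+\|H_3\|_1\lesssim 1$) and
\[
\Delta=(B^1_{s_1}-B^1_{s_3})+(B^2_{s_4}-B^2_{s_1})+(B^3_{s_3}-B^3_{s_2})+(B^4_{s_2}-B^4_{s_4}).
\]
By independence of the $B^j$, $\Delta$ is a centered Gaussian on $\R^2$ with covariance $\sigma^2 I_2$, where $\sigma^2=|s_1-s_3|+|s_1-s_4|+|s_2-s_3|+|s_2-s_4|$.

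Finally I would bound the expectation via $G_{\sigma^2}(\cdot)\leq (2\pi\sigma^2)^{-1}$ together with $\E_B[H_3(\Delta)]\leq\|H_3\|_\infty$ to obtain $\E_B[H_3(\Delta)]\lesssim 1/(1+\sigma^2)$. Using the crude lower bound $\sigma^2\geq|s_1-s_3|$ and integrating $s_3$ first,
\[
\int_{[0,t/\eps^2]^4}\frac{ds}{1+\sigma^2}\;\leq\;\Big(\tfrac{t}{\eps^2}\Big)^{3}\int_0^{t/\eps^2}\frac{ds_3}{1+|s_1-s_3|}\;\lesssim\;\Big(\tfrac{t}{\eps^2}\Big)^{3}|\log\eps|.
\]
Combining, the left-hand side of Lemma~\ref{l.bin} is bounded by $\eps^{8}\cdot\eps^{-2}\cdot(t/\eps^{2})^{3}|\log\eps|\lesssim|\log\eps|$.

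The main obstacle is the algebraic bookkeeping in the convolution step: one has to verify carefully that the four $h$-arguments, written in terms of $(\eta,\omega,\zeta)$, carry coefficients making the successive integrations genuine convolutions, and then track the Brownian coefficients that produce $\Delta$. Once this setup is done, the Gaussian estimate and the time-integral estimate are immediate; in fact the argument even gives the stronger bound $O(1)\log(t/\eps^2)$ rather than just $O(|\log\eps|)$.
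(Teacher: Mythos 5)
Your proof is correct and takes a genuinely different route from the paper's. The paper's proof is entirely in Fourier space: it rewrites the four $\cR$-factors as a spatial Fourier integral, uses the Brownian characteristic functions $\E_B[e^{i\eta_j\cdot B_{s_j}-i\eta_{j+1}\cdot B_{s_{j+1}}}]$, keeps only the Gaussian decay $e^{-\frac12|\eta_1|^2(s_1-s_2)}$ coming from the largest time, and then computes $\int_0^{t/\eps^2}\int_{\R^2}e^{-\frac12|\eta_1|^2 s}|\hat h(\eta_1)|\,d\eta_1\,ds\les|\log\eps|$. You stay in real space: you exploit the same 4-cycle structure of $\tilde{\mathcal{O}}$, but instead of Fourier-transforming you successively convolve out the difference variables $\eta,\zeta,\omega$ to collapse the four $h$-factors into one function $H_3\in L^1\cap L^\infty$ evaluated at the centered Gaussian vector $\Delta$ with variance $\sigma^2$, and then use the elementary heat-kernel bound $\E_B[H_3(\Delta)]\lesssim (1+\sigma^2)^{-1}$ followed by $\sigma^2\ge|s_1-s_3|$ to produce the logarithm. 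Your bookkeeping checks out: $\Delta=\Delta_{12}+\Delta_{24}-\Delta_{34}-\Delta_{13}$ rearranges as you wrote, $\sigma^2$ is as stated, and the final count $\eps^8\cdot\eps^{-2}\cdot(t/\eps^2)^3\log(t/\eps^2)\lesssim|\log\eps|$ is right. The trade-off is that the Fourier route is mechanically cleaner once committed to, while yours is more elementary (no transform, no bound on $|\hat f|$ by $1$) at the cost of verifying that the successive $\eta,\zeta,\omega$ integrations are genuine convolutions, which you've done correctly; both give the same order.
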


\begin{proof}
Without loss of generality, assume $h$ is even. By symmetry, we assume in the proof that $\tilde{\mathcal{O}}=\{(1,2),(3,4),(2,3),(1,4)\}$. In this way we can write
\[
\begin{aligned}
\prod_{(i,k)\in \tilde{\mathcal{O}}} \int_0^{t/\eps^2} h(\tfrac{x_i-x_k}{\eps}+B^i_s-B^k_s)ds=\int_{[0,t/\eps^2]^4}\prod_{j=1}^4 h(\tfrac{x_j-x_{j-1}}{\eps}+B^j_{s_j}-B^{j-1}_{s_j})ds,
%=\sum_{\tau}\int_{A_\tau}\prod_{j=1}^4 h(\frac{x_j-x_{j-1}}{\eps}+B^j_{s_j}-B^{j-1}_{s_j})ds,
\end{aligned}
\]
with the convention $x_0=x_4,B^0=B^4$. Denoting $\hat{f}(\xi)=\int f(x)e^{-i\xi\cdot x}dx$ as the Fourier transform of $f$, we have 
\[
\begin{aligned}
&\int_{\R^{8}} \prod_{j=1}^4 f(x_j)h(\tfrac{x_j-x_{j-1}}{\eps}+B^j_{s_j}-B^{j-1}_{s_j}) dx\\
&=\frac{1}{(2\pi)^{8}}\int_{\R^{16}} \prod_{j=1}^4 f(x_j)\hat{h}(\eta_j) e^{i\eta_j\cdot (x_j-x_{j-1})/\eps}e^{i\eta_j \cdot(B^j_{s_j}-B^{j-1}_{s_j})} d\eta dx\\
&=\frac{1}{(2\pi)^{8}} \int_{\R^{8}} \prod_{j=1}^4 \hat{f}(\tfrac{\eta_j-\eta_{j-1}}{\eps}) \hat{h}(\eta_j)  e^{i(\eta_j\cdot B^j_{s_j}-\eta_{j+1}\cdot B^j_{s_{j+1}})}d\eta,
\end{aligned}
\]
with $\eta_0=\eta_4,\eta_5=\eta_1, s_5=s_1$. Thus, it suffices to estimate 
\[
\begin{aligned}
&\int_{[0,t/\eps^2]^4}\int_{\R^{8}}\prod_{j=1}^4 \hat{f}(\tfrac{\eta_j-\eta_{j-1}}{\eps}) \hat{h}(\eta_j)  \E_B[e^{i(\eta_j\cdot B_{s_j}-\eta_{j+1}\cdot B_{s_{j+1}})}]d\eta ds\\
&=\int_{[0,t]^4} \int_{\R^8}\prod_{j=1}^4 \hat{f}(\eta_j-\eta_{j-1}) \hat{h}(\eps\eta_j)  \E_B[e^{i(\eta_j\cdot B_{s_j}-\eta_{j+1}\cdot B_{s_{j+1}})}]d\eta ds,
\end{aligned}
\]
where we changed variables $s_j\mapsto s_j/\eps^2, \eta_j\mapsto \eps \eta_j$ and used the scaling property of the Brownian motion. Without loss of generality, consider the set $A_1=\{(s_1,\ldots,s_4)\in[0,t]^4: s_1\geq s_j,j\neq 1\}$, it is clear that in $A_1$ we have 
\[
\prod_{j=1}^4 \E_B[e^{i(\eta_j\cdot B_{s_j}-\eta_{j+1}\cdot B_{s_{j+1}})}]\leq e^{-\frac12|\eta_1|^2(s_1-s_{2})},
\]
which implies
\[
\begin{aligned}
&\int_{A_1} \int_{\R^8}\prod_{j=1}^4 |\hat{f}(\eta_j-\eta_{j-1}) \hat{h}(\eps\eta_j)|\,  \E_B[e^{i(\eta_j\cdot B_{s_j}-\eta_{j+1}\cdot B_{s_{j+1}})}]d\eta ds \\
&\leq \int_{A_1} \int_{\R^8}e^{-\frac12|\eta_1|^2(s_1-s_{2})}\prod_{j=1}^4 |\hat{f}(\eta_j-\eta_{j-1}) \hat{h}(\eps\eta_j)|d\eta ds\\
&\les \int_{A_1}\int_{\R^8}e^{-\frac12|\eta_1|^2(s_1-s_{2})}|\hat{h}(\eps \eta_1)\hat{f}(\tilde{\eta}_2)\hat{f}(\tilde{\eta}_3)\hat{f}(\tilde{\eta}_4)|d\eta_1d\tilde{\eta} ds.
\end{aligned}
\]
In the last ``$\les$'' we bounded $|\hat{f}(\eta_1-\eta_4)|\les1$ and changed variables $\eta_j-\eta_{j-1}\mapsto \tilde{\eta}_j,j=2,3,4$. The last integral can be computed explicitly, and we use the fact that 
\[
\int_0^t \int_{\R^2}e^{-\frac12|\eta_1|^2s} |\hat{h}(\eps \eta_1)| d\eta_1 ds=\int_0^{t/\eps^2} \int_{\R^2} e^{-\frac12|\eta_1|^2 s} |\hat{h}(\eta_1)| d\eta_1 ds\les |\log\eps|
\]
to complete the proof.
\end{proof}

\subsection{Proof of Proposition~\ref{p.gauss}}
Recall that $Y_\eps=\tfrac{X_\eps-\E[X_\eps]}{\sqrt{\Var[X_\eps]}}$.
Since 
\[
\begin{aligned}
d_{\mathrm{TV}}(Y_\eps,\zeta) \les \E[\|DY_\eps\|_H^4]^{1/4} \E[\|D^2Y_\eps\|_{\mathrm{op}}^4]^{1/4}=\tfrac{1}{\Var[X_\eps]}\E[\|DX_\eps\|_H^4]^{1/4} \E[\|D^2X_\eps\|_{\mathrm{op}}^4]^{1/4},
\end{aligned}
\]
using the fact that $\Var[X_\eps]\sim |\log \eps|^{-1}$ and  applying Lemmas~\ref{l.1stde}, \ref{l.p1} and \ref{l.p2}, we have 
\[
d_{\mathrm{TV}}(Y_\eps,\zeta) \les |\log\eps|\times |\log \eps|^{-\frac12+\delta}\times \left(|\log \eps|^{-4+\delta} +|\log \eps|^{-3+\delta}\right)^{\frac14}.
\]
By choosing $\delta$ small, the r.h.s. goes to zero as $\eps\to0$.

\appendix

\section{Auxiliary lemmas}

Recall that $\beta_\eps=\beta|\log\eps|^{-1/2}$ and $G_t(x)$ is the standard heat kernel. We present some rather standard estimates on the integral of Brownian functionals for the convenience of readers. Similar estimates can be found in \cite{CRS18,chatterjee2018constructing}.
\begin{lemma}\label{l.exmm}
Fix $t>0$, there exists $\beta_0>0$ such that if $\beta<\beta_0$, we have in $d=2$ that  
\begin{equation}
\label{eq:BMconcl}
\sup_{x\in\R^2,\eps\in(0,1)} \E_B\left[ e^{\beta_\eps^2\int_0^{t/\eps^2} R(x+B_s)ds}\right]<\infty,
\end{equation}
and 
\begin{equation}
\label{eq:bridgeconcl}
\sup_{x,y\in\R^2, \eps\in(0,1)} \E_B\left[ e^{\beta_\eps^2\int_0^{t/\eps^2} R(x+B_s)ds} \bigg| B_{t/\eps^2}=y\right]<\infty.
\end{equation}
\end{lemma}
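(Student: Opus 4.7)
My plan is to prove both bounds by a Khas'minskii-type argument: Taylor expand the exponential, bound the $n$-th moment inductively via the Markov property, and show the uniform first-moment bound $\alpha$ stays below $1$ for $\beta$ small.

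\textbf{Step 1 (First moment, BM case).} For each $s>0$, $\E_B[R(x+B_s)] = \int R(z)G_s(z-x)dz$ is bounded by $\|R\|_\infty$ and, using $G_s(\cdot)\le (2\pi s)^{-1}$ together with $\int R = 1$, also by $(2\pi s)^{-1}$; hence by $C_R/(1+s)$ uniformly in $x$. Integrating in $s\in[0,T]$ gives $\sup_x\E_B[\int_0^T R(x+B_s)ds]\le C\log(1+T)$.

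\textbf{Step 2 (First moment, bridge case).} The key observation is that the marginal of $B_s$ conditional on $B_T=y$ is Gaussian with mean $sy/T$ and variance $s(T-s)/T$, so its sup-norm $T/(2\pi s(T-s))$ does \emph{not} depend on $y$. Combining with $\|R\|_\infty$ and $\int R=1$ yields $\E_B[R(x+B_s)\mid B_T=y]\le\min(\|R\|_\infty, T/(2\pi s(T-s)))$, uniformly in $x,y$. The partial fraction $T/(s(T-s)) = 1/s + 1/(T-s)$ and integration in $s$ then give $\sup_{x,y}\E_B[\int_0^T R(x+B_s)ds\mid B_T=y]\le C'\log(1+T)$.

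\textbf{Step 3 (Khas'minskii recursion and conclusion).} Set $V(\cdot)=\beta_\eps^2 R(x+\cdot)$ and let $\alpha_T$ be the supremum over $t'\le T$ and all endpoint conditions of the first moments in Steps 1--2, so $\alpha_T\le C''\beta_\eps^2\log(1+T)$. Use the identity $(\int_0^T V(B_s)ds)^n = n\int_0^T V(B_t)(\int_0^t V(B_s)ds)^{n-1}dt$ and condition on $B_t=z$: by the Markov property of Brownian motion and of the Brownian bridge, the past $(B_s)_{s\le t}$ is in either case a Brownian bridge from $0$ to $z$ on $[0,t]$. The induction hypothesis (on bridges of length $\le T$) yields
\[
\E[(\int_0^T V(B_s)ds)^n]\le n(n-1)!\alpha_T^{n-1}\int_0^T\E[V(B_t)]dt\le n!\alpha_T^n,
\]
and hence $\E[e^{\int_0^T V(B_s)ds}]\le\sum_n\alpha_T^n = 1/(1-\alpha_T)$ once $\alpha_T<1$. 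With $T=t/\eps^2$ and $\beta_\eps^2=\beta^2/|\log\eps|$, $\alpha_T\to 2C''\beta^2$ as $\eps\to 0$, so choosing $\beta<\beta_0:=(2C'')^{-1/2}$ gives $\alpha_T<1$ uniformly in $\eps\in(0,1)$, which yields both conclusions.

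The principal obstacle is making the first-moment bound of Step 2 uniform in $y$, since $y$ enters the mean of the bridge marginal. This is handled by the observation above that the shift by $y$ does not affect the sup-norm of the Gaussian density, leaving the $L^\infty$ bound on $\E_B[R(x+B_s)\mid B_T=y]$ independent of $y$. Once this is in hand, the Khas'minskii iteration proceeds identically to the unconditional case because the Brownian bridge itself has the Markov property, and conditioning at an interior time $t$ decomposes it into independent bridges on $[0,t]$ and $[t,T]$.
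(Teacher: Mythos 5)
Your proposal is correct and follows essentially the same strategy as the paper's proof, which explicitly invokes Portenko's lemma (the same Khas'minskii-type iteration you describe): establish the logarithmic first-moment bound $\sup_{x,y}\E_B[\int_0^T R(x+B_s)ds\mid B_T=y]\lesssim \log T$ using the $T/(s(T-s))$ bound on the bridge marginal density, iterate via the Markov property to get $n!(C|\log\eps|)^n$ bounds on the $n$-th moments, and sum the exponential series using $\beta_\eps^2=\beta^2/|\log\eps|$. The only cosmetic difference is that the paper proves the bridge case first and deduces the unconditional bound by integrating in $y$, whereas you run the two in parallel with a single constant $\alpha_T$, conditioning at the largest time index rather than the second-largest; both organizations are sound and close the induction identically.
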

\begin{proof}
It suffices to prove \eqref{eq:bridgeconcl} since \eqref{eq:BMconcl} follows from an integration in $y$. We claim \eqref{eq:bridgeconcl} is implied by
\begin{equation}\label{e.291}
\sup_{x,y\in\R^2,\eps\in(0,1)} |\log\eps|^{-1}\int_0^{t/\eps^2}\E_B[R(x+B_s)|B_{t/\eps^2}=y]ds \leq C.
\end{equation}
The proof essentially follows Portenko's lemma and we sketch it here for the convenience of readers. For any $n\geq 1$, we can write 
\[
\E_B\left[\left(\int_0^{t/\eps^2}R(x+B_s)ds\right)^n\bigg| B_{t/\eps^2}=y\right]=n!\int_{[0,t/\eps^2]_<^n} \E_B\left[ \prod_{j=1}^nR(x+B_{s_j}) \bigg|B_{t/\eps^2}=y\right]ds.
\]
By conditioning on $B_{s_{n-1}}$ and applying \eqref{e.291} to the integral in $s_n$, we have
\[
\int_{[0,t/\eps^2]_<^n} \E_B\left[ \prod_{j=1}^nR(x+B_{s_j}) \bigg|B_{t/\eps^2}=y\right]ds \leq C|\log \eps|\int_{[0,t/\eps^2]_<^{n-1}} \E_B\left[ \prod_{j=1}^{n-1}R(x+B_{s_j})\right]ds.
\]
Iterating this procedure yields
\[
\E_B\left[\left(\int_0^{t/\eps^2}R(x+B_s)ds\right)^n\bigg| B_{t/\eps^2}=y\right] \leq n! (C|\log\eps|)^n,
\]
which completes the proof of \eqref{eq:bridgeconcl}.

It remains to show \eqref{e.291}. For the Brownian bridge, we only need to show 
\[
\int_0^{t/2\eps^2} \E_B[R(x+B_s)|B_{t/\eps^2}=y]ds \leq C|\log\eps|
\]
for some constant $C$ independent of $x,y,\eps$. Note that $B_s$ has the Gaussian distribution with mean $\tfrac{ys}{t/\eps^2}$ and variance $\tfrac{s(t/\eps^2-s)}{t/\eps^2}$, so its density is bounded from above by $\frac{1}{s}$ when $s\leq \tfrac{t}{2\eps^2}$. Thus, we have 
\[
 \int_0^{t/2\eps^2} \E_B[R(x+B_s)|B_{t/\eps^2}=y]ds\les 1+\int_1^{t/2\eps^2}\int_{\R^2} R(x+w)\tfrac{1}{s} dwds \les 1+\log\tfrac{t}{2\eps^2}.
\]
The proof is complete.
\end{proof}

\begin{lemma}\label{l.291}
For any $0\leq g\in \C_c(\R^2)$ and $t>1$, we have
\begin{equation}\label{e.basicinequality}
\begin{aligned}
&\int_{\R^4} g(x_1)g(x_2)\E_B\left[\big|\int_0^t R(\tfrac{x_1-x_2}{\eps}+B_s)ds\big|^2\right]dx_1dx_2\leq C(1+\log t)\eps^2 t,\\
&\int_{\R^4} g(x_1)g(x_2)\E_B\left[\int_0^t R(\tfrac{x_1-x_2}{\eps}+B_s)ds\right]dx_1dx_2\leq C\eps^2 t,
\end{aligned}
\end{equation}
with some constant $C$ independent of $t,\eps$. For any $t>0,n\in\mathbb{Z}_+$, we also have 
\begin{equation}\label{e.pointwiseinequality}
\begin{aligned}
\E_B\left[\big|\int_0^{t/\eps^{2}} R(\tfrac{x}{\eps}+B_s)ds\big|^n\right]\leq C(n,t)|\log \eps|^{n-1}(1+|\log |x||)
%&\E_B\left[\int_0^{t/\eps^2} R(\tfrac{x}{\eps}+B_s)ds\right]\leq C(1+ |\log |x||+\1_{|x|\leq 3\eps}|\log\eps|)
\end{aligned}
\end{equation}
with some constant $C(n,t)$ independent of $x,\eps$.
\end{lemma}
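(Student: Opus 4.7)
The plan is to treat the three bounds in turn, all via direct calculation, Fubini, and explicit heat-kernel estimates in $d=2$. For the second (first-moment) inequality in \eqref{e.basicinequality}, I would change variables $u=(x_1-x_2)/\eps$, $v=x_2$, obtaining $\eps^2\int_{\R^2}(g*\tilde g)(\eps u)\,\E_B[\int_0^t R(u+B_s)\,ds]\,du$. Since $|g*\tilde g|$ is uniformly bounded and $\int_{\R^2}\E_B[R(u+B_s)]\,du=\int R=1$ (Fubini plus normalization of the heat kernel), the time integral yields $t$, giving the bound $\leq C\eps^2 t$.

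For the first (second-moment) inequality, expand the square and apply Fubini. After the same change of variables, the core quantity is $\int_{\R^2}\E_B[R(u+B_{s_1})R(u+B_{s_2})]\,du=\E_B[(R*\tilde R)(B_{s_1}-B_{s_2})]=\int(R*\tilde R)(w)G_{|s_1-s_2|}(w)\,dw$, where $\tilde R(x)=R(-x)$. Using $\int(R*\tilde R)(w)G_\tau(w)\,dw\leq\min(\|R*\tilde R\|_\infty,\|R*\tilde R\|_1/(2\pi\tau))$, the inner integral in $\tau\in[0,s_2]$ is $\leq C(1+\log s_2)$, and the double time integral on $[0,t]^2$ is then $\leq Ct(1+\log t)$; the $\eps^2$ from the change of variables yields the stated bound.

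For the pointwise estimate \eqref{e.pointwiseinequality}, I would expand the $n$th power as $n!$ times an integral over the simplex $[0,t/\eps^2]_<^n$ and integrate from the outermost time variable inward using the Markov property. Each pass produces a factor
\[
\int_{s_{k-1}}^{t/\eps^2}(R*G_{\sigma-s_{k-1}})(x/\eps+B_{s_{k-1}})\,d\sigma\leq C(1+|\log\eps|),
\]
uniformly in the random starting point, by splitting $\tau=\sigma-s_{k-1}\leq 1$ (use $\|R*G_\tau\|_\infty\leq\|R\|_\infty$) and $\tau\geq 1$ (use $\|R*G_\tau\|_\infty\leq 1/(2\pi\tau)$). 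After $n-1$ such steps the problem reduces to the innermost integral $\int_0^{t/\eps^2}(R*G_{s_1})(x/\eps)\,ds_1$, which must be bounded by $C(1+|\log|x||)$. Since $R$ is supported in $\{|y|\leq 1\}$, for $|x/\eps|\geq 2$ we have $G_{s_1}(x/\eps-w)\leq\tfrac{1}{2\pi s_1}e^{-|x|^2/(8\eps^2 s_1)}$ on $\mathrm{supp}(R)$, and the substitution $u=|x|^2/(8\eps^2 s_1)$ converts the time integral into the exponential integral $\tfrac{1}{2\pi}E_1(|x|^2/(8t))$, which behaves like $|\log|x||$ as $|x|\to 0$ and is bounded otherwise; for $|x/\eps|\leq 2$ one has $|x|\lesssim\eps$, so $|\log|x||\gtrsim|\log\eps|$ and the crude uniform bound $C(1+|\log\eps|)$ is absorbed into $C(1+|\log|x||)$.

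The main obstacle is precisely this innermost step. The generic uniform-in-starting-point bound of order $|\log\eps|$ would introduce a parasitic logarithm and miss the correct $x$-dependence. What saves us is that $B_{s_1}$ starts from the origin while the potential is centered at $x/\eps$, so the relevant quantity is essentially the two-dimensional Green's function at spatial scale $x$, whose logarithmic singularity at the origin produces the $(1+|\log|x||)$ factor and reflects the same Kallianpur--Robbins recurrence that underlies the effective variance $\nu_{\mathrm{eff}}^2$.
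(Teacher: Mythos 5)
Your argument is correct and follows essentially the same route as the paper's: expand the moments, iteratively integrate the latest time variable using heat-kernel bounds to extract the logarithmic factors (one factor of $1+\log t$, resp.\ $1+|\log\eps|$, per pass), and treat the innermost integral via the exponential integral to obtain the $1+|\log|x||$ dependence. The only cosmetic difference is in the $g$-averaged bound, where you substitute $u=(x_1-x_2)/\eps$ and exploit $\|g*\tilde g\|_\infty<\infty$ together with $\int R=1$, whereas the paper passes to Fourier variables and uses Plancherel; both serve the same purpose.
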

\begin{proof}
To prove \eqref{e.basicinequality}, we write the expectation explicitly:
\begin{equation}\label{e.293}
\begin{aligned}
\E_B\left[\big|\int_0^t R(\tfrac{x}{\eps}+B_s)ds\big|^2\right]=&2\int_0^t ds\int_0^s du \, \E_B[ R(\tfrac{x}{\eps}+B_s)R(\tfrac{x}{\eps}+B_u)]\\
=&2\int_0^t ds\int_0^s du \int_{\R^4} R(y)R(z)G_u(z-\tfrac{x}{\eps})G_{s-u}(y-z)dzdy.
\end{aligned}
\end{equation}
Integrating in $s$ and $y$ yields
\begin{equation}\label{e.510}
\int_u^t \int_{\R^2} R(y)G_{s-u}(y-z)dyds\les 1+\int_{u+1}^t \int_{\R^2} R(y)\tfrac{1}{s-u}dyds \les 1+\log t,%\int_{|y-z|^2/2t}^\infty \lambda^{-1}e^{-\lambda} d\lambda \les 1+|\log|y-z||+\log t
\end{equation}
which implies
\begin{equation}\label{e.basic1}
\E_B\left[\big|\int_0^t R(\tfrac{x}{\eps}+B_s)ds\big|^2\right] \les (1+\log t) \int_0^t  \int_{\R^2} R(z) G_u(z-\tfrac{x}{\eps})dzdu.
\end{equation}
Since the integral $\int_0^t\int_{\R^2}R(z)G_u(z-\tfrac{x}{\eps})dzdu=\E_B[\int_0^t R(\tfrac{x}{\eps}+B_s)ds]$, to prove \eqref{e.basicinequality}, we only need to note that 
\[
\begin{aligned}
&\int_{\R^4}g(x_1)g(x_2)\int_0^t du \int_{\R^2} R(z) G_u(z-\tfrac{x_1-x_2}{\eps})dz \\
&= \tfrac{\eps^2}{(2\pi)^2}\int_0^t \int_{\R^2} |\hat{g}(\xi)|^2 \hat{R}(\eps \xi) e^{-\frac12|\eps\xi|^2 u} d\xi du \les \eps^2 t,
\end{aligned}
\]
where we used the fact that $\sup_{\xi\in\R^d}\hat{R}(\xi) \leq \int R=1$ in the last step.

To prove \eqref{e.pointwiseinequality}, by the same argument above, we have 
\[
\E_B\left[\big|\int_0^{t/\eps^{2}} R(\tfrac{x}{\eps}+B_s)ds\big|^n\right]\les |\log \eps|^{n-1}\int_0^{t/\eps^2}  \int_{\R^2} R(z) G_u(z-\tfrac{x}{\eps})dzdu.
\]
We estimate the integral in $u$ by 
\[
\int_0^t G_u(z-\tfrac{x}{\eps}) du\les \int_0^t u^{-1}e^{- \frac{|z-x/\eps|^2}{2u}}du \les \int_{\frac{|z-x/\eps|^2}{2t}}^\infty \lambda^{-1}e^{-\lambda}d\lambda \les 1+|\log|\eps z-x||+|\log \eps^2 t|.
\]
For the integral in $z$, recall that $R(z)=0$ for $|z|\geq 1$, we have 
\[
\int_{\R^2}R(z)|\log|\eps z-x|| dz\les \int_{|z|\leq 1}|\log|\eps z-x||dz \les 1+ |\log |x||+\1_{|x|\leq 3\eps}|\log\eps|\les 1+|\log |x||.
\]
The proof is complete.
\end{proof}

\begin{lemma}\label{l.295}
Fix $t>0$ and a compact set $K\subset \R^2$ with $0\notin K$, we have 
\[
\sup_{w\in K}\,\E\left[ \big(\int_{\tfrac{t}{\eps^2|\log\eps|}}^{\tfrac{t}{\eps^2}}R(B_s)ds\big)^2\bigg|B_{t/\eps^2}=\frac{w}{\eps}\right]\les |\log\eps|(\log|\log \eps|).
\]
\end{lemma}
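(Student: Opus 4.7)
The plan is to expand the square, apply the Markov property of the Brownian bridge, and reduce the second conditional moment to a product of two one-dimensional integrals that contribute $|\log\eps|$ and $\log|\log\eps|$ respectively. Write $T=t/\eps^2$, $s_\eps = t/(\eps^2|\log\eps|)$, $z = w/\eps$. I would begin by writing
\[
\E\left[\Big(\int_{s_\eps}^T R(B_s)ds\Big)^2 \Big| B_T=z\right] = 2\int_{s_\eps}^T\int_{s_\eps}^s \E[R(B_u)R(B_s)|B_T=z] \,du\,ds,
\]
and using the bridge Markov property for $u<s<T$ to rewrite $\E[R(B_u)R(B_s)|B_T=z] = \E\big[R(B_s)\,\E[R(B_u)|B_s]\big|B_T=z\big]$.

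Next I would bound the inner integral uniformly in the conditioning value. The conditional density of $B_u$ given $B_s=y$ is Gaussian with peak $\tfrac{s}{2\pi u(s-u)}=\tfrac{1}{2\pi}(\tfrac{1}{u}+\tfrac{1}{s-u})$, and combining this with the trivial $\|R\|_\infty$ bound gives $\E[R(B_u)|B_s=y] \les \min(1,\,1/u+1/(s-u))$ uniformly in $|y|\le 1$. Since $u\ge s_\eps$ and $\log(T/s_\eps)=\log|\log\eps|$, the $1/u$ piece integrates to $O(\log|\log\eps|)$, while cutting $\min(1,1/(s-u))$ at $s-u=1$ yields at most $1+\log(s-s_\eps) \les |\log\eps|$. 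Hence $\int_{s_\eps}^s\E[R(B_u)|B_s=y]\,du \les |\log\eps|$ uniformly in $|y|\le 1$.

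It then remains to bound $\int_{s_\eps}^T\E[R(B_s)|B_T=z]\,ds$. I would split the range at $T/2$. On $[s_\eps,T/2]$ the naive density bound $\tfrac{T}{2\pi s(T-s)}\le \tfrac{1}{\pi s}$ suffices, yielding a contribution $\les\log(T/s_\eps)=\log|\log\eps|$. The main obstacle is the range $[T/2,T]$: here the naive bound $1/(T-s)$ would produce a divergent integral, and the hypothesis $w\ne 0$ must be used. The conditional mean $sz/T$ has modulus $\ges |w|/\eps$, so for $|y|\le 1$ the Gaussian factor forces the density to be $\les (T-s)^{-1}\exp(-c|w|^2/(\eps^2(T-s)))$. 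Substituting $\mu=c|w|^2/(\eps^2(T-s))$ converts the integral into $\int_{2c|w|^2/t}^\infty \mu^{-1}e^{-\mu}d\mu = O(1)$, finite because the lower limit $2c|w|^2/t$ is a positive constant independent of $\eps$.

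Multiplying the two contributions gives the claimed bound $|\log\eps|\cdot\log|\log\eps|$. The essential difficulty is the endpoint estimate on $[T/2,T]$, where the Gaussian suppression arising from $w\ne 0$ is indispensable; without it the density of $B_s\,|\,B_T=w/\eps$ would be too singular as $s\to T$ to be integrable, and one would lose the improvement over the straightforward Portenko-type iteration that yields only $(|\log\eps|)^2$.
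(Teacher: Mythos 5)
Your proof is correct and arrives at the same conclusion, but the route differs from the paper's in how the bridge conditioning is discharged. The paper writes out the joint conditional density of $(B_u,B_s)$ given $B_{t/\eps^2}=w/\eps$ as a product of heat kernels divided by $G_{t/\eps^2}(w/\eps)$, and then observes in a single step that the troublesome factor
\[
\frac{G_{t/\eps^2-s}(w/\eps-x)}{G_{t/\eps^2}(w/\eps)}=\frac{G_{t-\eps^2 s}(w-\eps x)}{G_t(w)}\les 1
\]
uniformly in $s\leq t/\eps^2$ and $|x|\leq 1$ --- this is the only place where $w\neq 0$ is used, and it removes the conditioning entirely, reducing the problem to the unconditional estimate of Lemmas~\ref{l.291}--\ref{l.292} which produces the bound $\log(t_2-t_1)\cdot\log(t_2/t_1)$. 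You instead keep the conditioning in play: the Markov property factors the double integral into an inner $u$-integral (a bridge from $0$ to $B_s$, yielding $|\log\eps|$ from the peak density $\tfrac1u+\tfrac{1}{s-u}$) and an outer $s$-integral under the bridge to $w/\eps$, and you handle the endpoint singularity as $s\to T$ by the change of variable $\mu=c|w|^2/(\eps^2(T-s))$, which is where $w\neq 0$ enters for you. Both arguments assign $|\log\eps|$ to one iterated integral and $\log|\log\eps|$ to the other (with the roles of the two time variables swapped), and both hinge on the identical observation that $w\neq 0$ furnishes Gaussian suppression at the terminal time. The paper's ratio bound is more economical, since it neutralizes the endpoint once and for all and lets one recycle an already-established unconditional lemma; your version is more hands-on and makes it transparent exactly which part of the integral produces each logarithmic factor and precisely how the divergence at $s\to T$ is averted.
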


\begin{proof}
To simplify the notation, denote $t_1=\tfrac{t}{\eps^2|\log\eps|}$ and $t_2=\tfrac{t}{\eps^2}$, and write 
\[
\big(\int_{t_1}^{t_2} R(B_s)ds\big)^2=2\int_{[t_1,t_2]_<^2}R(B_s)R(B_u)\1_{s>u}duds.
\]
Now we compute the conditional expectation
\[
\begin{aligned}
\E[R(B_s)R(B_u)|B_{t_2}=w/\eps]=&\int_{\R^{4}} R(x)R(y)\frac{G_{u}(y)G_{s-u}(x-y)G_{t_2-s}(\tfrac{w}{\eps}-x)}{G_{t_2}(\tfrac{w}{\eps})} dxdy\\
=&\int_{\R^{4}} R(x)R(y) \frac{G_{u}(y)G_{s-u}(x-y)G_{t-\eps^2 s}(w-\eps x)}{G_{t}(w)} dxdy.
\end{aligned}
\]
Since $0\notin K$, $t>0$ is fixed, and $\mathrm{supp}(R)\subset \{x:|x|\leq 1\}$, we have for $\eps\ll1$ that
\[
\sup_{w\in K, \,|x|\leq 1}\frac{G_{t-\eps^2s}(w-\eps x)}{G_t(w)}\les 1,
\]
uniformly in $s\leq t/\eps^2$, so the conditional expectation is bounded by 
\[
\E[R(B_s)R(B_u)|B_{t_2}=w/\eps]\les \int_{\R^{4}} R(x)R(y)G_u(y)G_{s-u}(x-y)dxdy,
\]
which implies
\[
\begin{aligned}
&\E\left[ \big(\int_{\tfrac{t}{\eps^2|\log\eps|}}^{\tfrac{t}{\eps^2}}R(B_s)ds\big)^2\bigg|B_{t/\eps^2}=w/\eps\right]\\
&\les \int_{[t_1,t_2]_<^2}\int_{\R^{4}} R(x)R(y)G_u(y)G_{s-u}(x-y)dxdyduds.
\end{aligned}
\]
By \eqref{e.510}, the above integral bounded by 
\[
\log (t_2-t_1)\log\tfrac{t_2}{t_1} \les |\log \eps|(\log|\log\eps|),
\]
which completes the proof.
\end{proof}

\section{Negative moments of $Z_\eps(t,x)$}\label{s.nemm}
The goal is to show there exists $\beta_0>0$ such that if $\beta<\beta_0$ and $n\in\Z_+$, we have
\begin{equation}\label{e.negmm}
\sup_{t\in[0,T]}\sup_{\eps\in(0,1)} \E[ Z_\eps(t,x)^{-n}] \leq C_{\beta,n,T}
\end{equation}
for some constant $C_{\beta,n,T}>0$. The result is essentially implied by \cite[Theorem 4.6]{hu2018asymptotics}, and we only present the details here for the convenience of the readers. Since $Z_\eps(t,x)$ has the same distribution as $u(\tfrac{t}{\eps^2},\tfrac{x}{\eps})$ and is stationary in the $x-$variable, it suffices to estimate the small ball probability $\Pb[u(\tfrac{t}{\eps^2},x)\leq r]$ for $r\ll1$. From now on, we will fix $\eps>0$ and derive an estimate that is uniform in $\eps>0$ and $t\in[0,T]$. We fix $t>0,x\in\R^2$.

We first define an approximation of the spacetime white noise
\[
\dot{W}_\delta(t,x)=e^{-\delta(t^2+|x|^2)}\int_{\R^{3}} \phi_\delta(t-s,x-y)dW(s,y),
\]
where $\phi_\delta(t,x)=\tfrac{1}{\delta^4}\phi(\tfrac{t}{\delta^2},\tfrac{x}{\delta})$ with $\phi\in\C_c^\infty(\R^{3})$ such that $\phi$ is even and $\int \phi=1$. Thus, we have almost surely that $\dot{W}_\delta\in L^2(\R^{3})\cap \C^\infty(\R^{3})$. Define 
\[
V_\delta(t,x)=\int_{\R^2} \varphi(x-y)\dot{W}_\delta(t,y)dy, \    \   \ccR_\delta(t,s,x,y)=\E[V_\delta(t,x)V_\delta(s,y)],%= e^{-\eps(|t|^2+|s|^2+|x|^2+|y|^2)} \frac{1}{\eps}R_\phi(\frac{t-s}{\eps})R(x-y)
\]
 and $\cU_{\eps,\delta}(t,x)=\E_B\left[ e^{\V_{\eps,\delta}(t,B) }\right]$,
with 
\[
\V_{\eps,\delta}(t,B)=\beta_\eps\int_0^{t/\eps^2} V_\delta(\tfrac{t}{\eps^2}-s,x+B_s)ds-\tfrac12\beta^2_\eps\Q_\delta(\tfrac{t}{\eps^2},x,x,B,B),
\]
where
\[
\Q_\delta(t,x,y,B^1,B^2)=\int_{[0,t]^2}  \ccR_\delta(t-s,t-\ell,x+B_s^1,y+B_{\ell}^2)dsd\ell.
\]
By \cite[Proposition 4.2]{hu2018asymptotics}, for each fixed $\eps>0$, $\cU_{\eps,\delta}(t,x)\to u(\tfrac{t}{\eps^2},x)$ in probability as $\delta\to0$, so we only need to estimate $\Pb[\cU_{\eps,\delta}(t,x)\leq r]$ for $r\ll1$, uniformly in $\eps,\delta>0$ and $t\in[0,T]$.
%We mention that $\cU_\eps$ satisfies 
%\[
%\partial_t \cU_\eps=\frac12\Delta \cU_\eps+\beta V_\eps\diamond \cU_\eps,  
%\]
%where $\diamond$ is the Wick product.

With any given $\dot{W}_\delta$, define the expectation
\[
\E^{\dot{W}_\delta}_B[ F(B^1,B^2)]=\frac{\E_B[ F(B^1,B^2)e^{\V_{\eps,\delta}(t,B^1)+\V_{\eps,\delta}(t,B^2)}]}{\E_B[ e^{\V_{\eps,\delta}(t,B^1)+\V_{\eps,\delta}(t,B^2)}]}.
\]
To emphasize the dependence of $\cU_{\eps,\delta}$ on $\dot{W}_\delta$, we write $\cU_{\eps,\delta}(t,x)=\cU_\eps(t,x,\dot{W}_\delta)$. For any $\lambda>0$, define the set 
\[
A_\lambda(t,x)=\left\{\dot{W}_\delta: \cU_\eps(t,x,\dot{W}_\delta)>\tfrac12, \quad \beta_\eps^2\int_0^{t/\eps^2}\E_B^{\dot{W}_\delta}[ R(B^1_s-B^2_s)]ds\leq \lambda\right\}.
\]

\begin{lemma}
For any $\dot{\W}_\delta\in A_\lambda(t,x)$, we have 
\[
\cU_\eps(t,x,\dot{W}_\delta)\geq \tfrac12e^{-\sqrt{\lambda} \|\dot{W}_\delta-\dot{\W}_\delta\|_{L^2(\R^3)}}.
\]
%with $\|\cdot\|_2$ denoting the $L^2(\R^{d+1})$ norm.
\end{lemma}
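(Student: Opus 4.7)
I would exploit the affine dependence of $\V_{\eps,\delta}(t,B)$ on the noise input. Since $V_\delta(t,x)=\int_{\R^2}\varphi(x-y)\dot W_\delta(t,y)\,dy$ is linear in $\dot W_\delta$ while $\tfrac12\beta_\eps^2\Q_\delta$ is deterministic, setting $h:=\dot W_\delta-\dot{\W}_\delta\in L^2(\R^3)$ yields
\[
\V_{\eps,\delta}(t,B)[\dot W_\delta]=\V_{\eps,\delta}(t,B)[\dot{\W}_\delta]+\beta_\eps L_B(h),
\]
with the pathwise linear functional
\[
L_B(h):=\int_0^{t/\eps^2}\!\!\int_{\R^2}\varphi(x+B_s-y)\,h(\tfrac{t}{\eps^2}-s,y)\,dy\,ds.
\]
Writing $\E^{\dot{\W}_\delta,1}[F]:=\E_B[F\,e^{\V_{\eps,\delta}(t,B)[\dot{\W}_\delta]}]/\cU_\eps(t,x,\dot{\W}_\delta)$ for the one-path tilt (a probability measure on Brownian paths), we therefore have $\cU_\eps(t,x,\dot W_\delta)=\cU_\eps(t,x,\dot{\W}_\delta)\,\E^{\dot{\W}_\delta,1}[e^{\beta_\eps L_B(h)}]$.

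Jensen's inequality on this probability measure, together with $\cU_\eps(t,x,\dot{\W}_\delta)>\tfrac12$ built into $A_\lambda(t,x)$, gives
\[
\cU_\eps(t,x,\dot W_\delta)\ge\cU_\eps(t,x,\dot{\W}_\delta)\exp\bigl(\beta_\eps\E^{\dot{\W}_\delta,1}[L_B(h)]\bigr)>\tfrac12\exp\bigl(-\beta_\eps\bigl|\E^{\dot{\W}_\delta,1}[L_B(h)]\bigr|\bigr),
\]
so the lemma reduces to $\beta_\eps\bigl|\E^{\dot{\W}_\delta,1}[L_B(h)]\bigr|\le\sqrt{\lambda}\,\|h\|_{L^2(\R^3)}$. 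By linearity of $L_B$ in $h$, the tilted expectation is the $L^2$ pairing $\int K_s(y)\,h(\tfrac{t}{\eps^2}-s,y)\,dy\,ds$ with kernel $K_s(y):=\E^{\dot{\W}_\delta,1}[\varphi(x+B_s-y)]$, and Cauchy--Schwarz on $L^2([0,t/\eps^2]\times\R^2)$ (combined with the isometry $s\mapsto\tfrac{t}{\eps^2}-s$) yields
\[
\bigl|\E^{\dot{\W}_\delta,1}[L_B(h)]\bigr|\le\Bigl(\int_0^{t/\eps^2}\|K_s\|_{L^2(\R^2)}^2\,ds\Bigr)^{1/2}\,\|h\|_{L^2(\R^3)}.
\]

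The final step is to identify $\int_0^{t/\eps^2}\|K_s\|_{L^2(\R^2)}^2\,ds$ with the intersection integral controlled in $A_\lambda(t,x)$. Two facts suffice: (i) the two-path tilt $\E^{\dot{\W}_\delta}$ factorizes as $\E^{\dot{\W}_\delta,1}\otimes\E^{\dot{\W}_\delta,1}$ because the weight $e^{\V(B^1)+\V(B^2)}$ does, and (ii) $\int_{\R^2}\varphi(z_1-y)\varphi(z_2-y)\,dy=R(z_1-z_2)$ by \eqref{e.covfunction}. Combining,
\[
\|K_s\|_{L^2(\R^2)}^2=\int_{\R^2}\E^{\dot{\W}_\delta}\!\bigl[\varphi(x+B^1_s-y)\varphi(x+B^2_s-y)\bigr]\,dy=\E^{\dot{\W}_\delta}\!\bigl[R(B^1_s-B^2_s)\bigr],
\]
so $\beta_\eps^2\int_0^{t/\eps^2}\|K_s\|_{L^2(\R^2)}^2\,ds\le\lambda$ by the very definition of $A_\lambda$. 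No real obstacle arises: once the affine structure is observed, the whole argument is Jensen plus one Cauchy--Schwarz, and the two conditions packaged in $A_\lambda$ are precisely calibrated via the $\varphi$--$R$ identity to close the estimate.
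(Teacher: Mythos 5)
Your proof is correct and follows essentially the same route as the paper's: linearize the exponent in the noise (the $\Q_\delta$ term cancels), apply Jensen under the $\dot{\W}_\delta$-tilted Gibbs measure to bring down $\cU_\eps(t,x,\dot{\W}_\delta)>\tfrac12$, then bound the linear term by Cauchy--Schwarz in $L^2(\R^3)$ and identify $\int_{\R^2}|\E^{\dot{\W}_\delta,1}[\varphi(x+B_s-y)]|^2\,dy=\E^{\dot{\W}_\delta}[R(B^1_s-B^2_s)]$ via the product structure of the two-path tilt, which is exactly what the constraint in $A_\lambda$ controls. Your write-up merely makes explicit the one-path/two-path distinction that the paper leaves implicit in the notation $\E_B^{\dot{\W}_\delta}$.
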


\begin{proof}
We write 
\[
\begin{aligned}
\cU_\eps(t,x,\dot{W}_\delta)=\E_B[ e^{\V_{\eps,\delta}(t,B)}]=&\E_B[e^{\cV_{\eps,\delta}(t,B)}] \frac{ \E_B[e^{\V_{\eps,\delta}(t,B)-\cV_{\eps,\delta}(t,B)} e^{\cV_{\eps,\delta}(t,B)}]}{\E_B[ e^{\cV_{\eps,\delta}(t,B)}]}\\
=&\cU_\eps(t,x,\dot{\W}_\delta) \E_B^{\dot{\W}_\delta}[ e^{\V_{\eps,\delta}(t,B)-\cV_{\eps,\delta}(t,B)}],
\end{aligned}
\]
where $\cV_{\eps,\delta}(t,B)$ is obtained by replacing $\dot{W}_\delta$ by $\dot{\W}_\delta$ in the expression of $\V_{\eps,\delta}(t,B)$. By the fact that $\dot{\W}_\delta\in A_\lambda$ and Jensen's inequality, we have 
\[
\cU_\eps(t,x,\dot{W}_\delta)\geq\tfrac12  \exp(\E_B^{\dot{\W}_\delta}[ \V_{\eps,\delta}(t,B)-\cV_{\eps,\delta}(t,B)]).
\]
It remains to show  that
\begin{equation}\label{e.se251}
|\E_B^{\dot{\W}_\delta}[ \V_{\eps,\delta}(t,B)-\cV_{\eps,\delta}(t,B)]| \leq \sqrt{\lambda}\|W_\eps-\tilde{W}_\eps\|_{L^2(\R^3)}.
\end{equation}
We write 
\[
\begin{aligned}
\V_{\eps,\delta}(t,B)-\cV_{\eps,\delta}(t,B)%&\beta_\eps\int_0^{t/\eps^2} [V_\eps(t-s,x+B_s)-\tilde{V}_\eps(t-s,x+B_s)]ds\\
=\beta_\eps \int_0^{t/\eps^2}\int_{\R^2} \varphi(x+B_s-y)[\dot{W}_\delta(\tfrac{t}{\eps^2}-s,y)-\dot{\W}_\delta(\tfrac{t}{\eps^2}-s,y)]dyds,
\end{aligned}
\]
and apply Cauchy-Schwarz to derive 
\[
\begin{aligned}
&|\E_B^{\dot{\W}_\delta}[ \V_{\eps,\delta}(t,B)-\cV_{\eps,\delta}(t,B)]|\\
&\leq \|\dot{W}_\delta-\dot{\W}_\delta\|_{L^2(\R^3)}\sqrt{\beta_\eps^2\int_0^{t/\eps^2} \int_{\R^2} |\E_B^{\dot{\W}_\delta}[\varphi(x+B_s-y)] |^2 dyds} \\
&= \|\dot{W}_\delta-\dot{\W}_\delta\|_{L^2(\R^3)}\sqrt{\beta_\eps^2\int_0^{t/\eps^2} \E_B^{\dot{\W}_\delta}[R(B^1_s-B^2_s)]ds}\leq \sqrt{\lambda} \|\dot{W}_\delta-\dot{\W}_\delta\|_{L^2(\R^3)},
\end{aligned}
\]
which completes the proof.
\end{proof}

\begin{lemma}\label{l.lowerbd}
There exists constants $\lambda,c>0$ independent of $\eps,\delta>0$ and $t\in[0,T]$ such that $\Pb[A_\lambda(t,x)]\geq c$.
\end{lemma}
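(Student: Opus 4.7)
The plan is to combine a Paley--Zygmund lower bound on the event $\{\cU_\eps(t,x,\dot{W}_\delta)>\tfrac12\}$ with a Markov bound on the tilted intersection time. The observation that unlocks everything is that, since $B^1,B^2$ are independent under $\E_B$, the denominator in the definition of $\E_B^{\dot{W}_\delta}$ equals $\cU_\eps^2(t,x,\dot{W}_\delta)$. Hence the quantity
\[
\cU_\eps^2\cdot\beta_\eps^2\!\int_0^{t/\eps^2}\!\E_B^{\dot{W}_\delta}[R(B^1_s-B^2_s)]\,ds=\beta_\eps^2\!\int_0^{t/\eps^2}\!\E_B\bigl[R(B^1_s-B^2_s)\,e^{\V_{\eps,\delta}(t,B^1)+\V_{\eps,\delta}(t,B^2)}\bigr]ds
\]
has no random denominator, so its $\Pb$-expectation can be computed cleanly by Gaussian integration rather than by fighting with negative moments of $\cU_\eps$.

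First I would record the two Gaussian identities
\[
\E[\cU_\eps(t,x,\dot{W}_\delta)]=1,\qquad \E[\cU_\eps^2(t,x,\dot{W}_\delta)]=\E_B\bigl[e^{\beta_\eps^2\Q_\delta(t/\eps^2,x,x,B^1,B^2)}\bigr],
\]
which follow from the built-in It\^o correction $-\tfrac12\beta_\eps^2\Q_\delta(\tfrac{t}{\eps^2},x,x,B,B)$ inside $\V_{\eps,\delta}$. Since $\Q_\delta$ is obtained from $\int_0^{t/\eps^2}R((B^1-B^2)_s)ds$ by time-mollification against the probability density $\phi_\delta$, a routine extension of Lemma~\ref{l.exmm} bounds the right hand side by some constant $C$ uniformly in $\eps,\delta>0$ and $t\in[0,T]$, for $\beta$ small. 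Paley--Zygmund then yields $\Pb[\cU_\eps>\tfrac12]\ge \tfrac{1}{4C}=:c_0>0$, uniformly in those parameters.

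The same Gaussian integration applied to the tilted integral above yields, writing $J_\delta$ for the intersection-time integral in the definition of $A_\lambda$,
\[
\E\bigl[\cU_\eps^2\cdot J_\delta\bigr]=\beta_\eps^2\,\E_B\Bigl[\int_0^{t/\eps^2}R(B^1_s-B^2_s)\,ds\cdot e^{\beta_\eps^2\Q_\delta(t/\eps^2,x,x,B^1,B^2)}\Bigr].
\]
Cauchy--Schwarz together with the extended Lemma~\ref{l.exmm} (applied with $\sqrt2\beta$ in place of $\beta$) and the second-moment bound $\E_B\bigl[(\int_0^{t/\eps^2}R(B^1_s-B^2_s)ds)^2\bigr]\les |\log\eps|^2$ (obtained via $B^1-B^2\stackrel{d}{=}\sqrt 2\tilde B$ and the scheme of Lemma~\ref{l.291}) then give $\E[\cU_\eps^2 J_\delta]\le C_1$ uniformly, with the factor $\beta_\eps^2\cdot|\log\eps|$ from the two sources cancelling to leave an $O(\beta^2)$ answer.

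Finally, on $\{\cU_\eps>\tfrac12\}$ one has $\cU_\eps^2>\tfrac14$, so Markov delivers $\Pb[\cU_\eps>\tfrac12,\,J_\delta>\lambda]\le 4C_1/\lambda$, and choosing $\lambda=8C_1/c_0$ leaves $\Pb[A_\lambda(t,x)]\ge c_0/2>0$ uniformly in $\eps,\delta,t$. The main obstacle I anticipate is the uniform-in-$\delta$ passage of the exponential moment bound from the bare $\int_0^{t/\eps^2}R(B_s)ds$ to the mollified quadratic $\Q_\delta$: this requires using the nonnegativity of $\phi_\delta$ and the cutoff $e^{-\delta(t^2+|y|^2)}$ in the definition of $\dot{W}_\delta$ to argue that the smoothing does not inflate the norms, but it is essentially bookkeeping rather than a new idea.
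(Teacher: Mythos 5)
Your proof is correct and follows essentially the same route as the paper: Paley--Zygmund for $\Pb[\cU_\eps>\tfrac12]$, then the observation that on $\{\cU_\eps>\tfrac12\}$ the denominator of $\E_B^{\dot W_\delta}$ exceeds $\tfrac14$, so Markov on the untilted numerator $\beta_\eps^2\int_0^{t/\eps^2}\E_B[R(B^1_s-B^2_s)e^{\V^1+\V^2}]\,ds$ followed by Cauchy--Schwarz and the uniform exponential-moment bound on $\Q_\delta$ (the content of Lemma~\ref{l.intersection}) finishes the job. The paper phrases this as $\Pb[A_\lambda]\ge\Pb[\cU_\eps>\tfrac12]-\Pb[B_\lambda]$, but the underlying estimates are the same as yours.
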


\begin{proof}
We have 
\[
\Pb[A_\lambda(t,x)] \geq \Pb[\cU_\eps(t,x,\dot{W}_\delta)>\tfrac{1}{2}]-\Pb[ B_\lambda(t,x)],
\]
with 
\[
B_\lambda(t,x)=\left\{\dot{W}_\delta:  \cU_\eps(t,x,\dot{W}_\delta)>\tfrac12,  \beta_\eps^2\int_0^{t/\eps^2}\E_B^{\dot{W}_\delta}[ R(B^1_s-B^2_s)]ds> \lambda\right\}.
\]
Using the fact that $\E[\cU_\eps(t,x,\dot{W}_\delta)]=1$ and the Paley-Zygmund's inequality, we have 
\[
\Pb[\cU_\eps(t,x,\dot{W}_\delta)>\tfrac{1}{2}]\geq \frac{1}{4\E[\cU_\eps(t,x,\dot{W}_\delta)^2]}=\frac{1}{4\E_B[ e^{\beta_\eps^2 \Q_\delta(t/\eps^2,x,x,B^1,B^2)}]}.
\]
For $B_\lambda(t,x)$, we have 
\[
\begin{aligned}
\Pb[B_\lambda(t,x)]\leq &\Pb\left[ \beta_\eps^2\int_0^{t/\eps^2} \E_B[ R(B_s^1-B_s^2) e^{\V_{\eps,\delta}(t,B^1)+\V_{\eps,\delta}(t,B^2)}] ds >\tfrac{\lambda}{4}\right] \\
\leq &\tfrac{4}{\lambda}\E_B\left[ e^{\beta_\eps^2 \Q_\delta(t/\eps^2,x,x,B^1,B^2)}\beta_\eps^2\int_0^{t/\eps^2} R(B^1_s-B^2_s)ds\right]\\
\leq &\tfrac{4C}{\lambda}\E_B\left[ e^{2\beta^2_\eps \Q_\delta(t/\eps^2,x,x,B^1,B^2)}\right]^{1/2}
\end{aligned}
\]
for some constant $C>0$, where the last ``$\leq$'' comes from an application of Cauchy-Schwarz inequality and Lemma~\ref{l.exmm}. By Lemma~\ref{l.intersection} and choosing $\lambda$ large, there exists some constants $c,\lambda>0$ independent of $\eps,\delta>0$ such that $\Pb[A_\lambda(t,x)] \geq c$, which completes the proof.
\end{proof}

\begin{lemma}\label{l.intersection}
There exists $\beta_0>0$ such that if $\beta<\beta_0$, we have 
\[
1\leq\sup_{t\in[0,T]}\,\sup_{\eps,\delta\in(0,1)}\E_B\left[ e^{\beta_\eps^2 \Q_\delta(t/\eps^2,x,x,B^1,B^2)}\right]\leq C_{\beta,T}.
\]
\end{lemma}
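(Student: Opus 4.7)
The lower bound $\geq 1$ is immediate from Jensen's inequality applied to the Brownian expectation: $\E_B[e^{\beta_\eps^2 \Q_\delta}] \geq e^{\beta_\eps^2 \E_B[\Q_\delta]}$, and
\[
\E_B[\Q_\delta(t/\eps^2,x,x,B^1,B^2)] \;=\; \E_W\Bigl[\bigl(\E_B[\textstyle\int_0^{t/\eps^2} V_\delta(t/\eps^2-s,x+B_s)ds\,|\,W]\bigr)^2\Bigr] \;\geq\; 0,
\]
by the independence of $B^1, B^2$.

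For the upper bound, the idea is to mimic the Portenko-style argument of Lemma~\ref{l.exmm}. Expand via Taylor and write out $\Q_\delta^n$ as a $2n$-fold time integral:
\[
\E_B\bigl[ e^{\beta_\eps^2 \Q_\delta}\bigr] \;=\; \sum_{n=0}^\infty \frac{\beta_\eps^{2n}}{n!}\,\E_B[\Q_\delta^n], \quad \E_B[\Q_\delta^n] = \int_{[0,t/\eps^2]^{2n}} \E_B\Bigl[\prod_{j=1}^n \ccR_\delta\bigl(t/\eps^2 - s_j,\, t/\eps^2 - \ell_j,\, x + B^1_{s_j},\, x + B^2_{\ell_j}\bigr)\Bigr]\, ds\, d\ell.
\]
By the symmetry under permutation of the $n$ pairs $(s_j, \ell_j)$, one can restrict to the ordered simplex $\{0 \leq s_1 \leq \cdots \leq s_n \leq t/\eps^2\}$ at the cost of a factor $n!$. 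The crux is to establish the uniform bound
\[
\sup_{\substack{\delta, \eps \in (0,1)\\ \xi, \eta \in \R^2,\,\text{path of } B^2}} \int_0^{t/\eps^2}\int_0^{t/\eps^2} \E_{B^1}\bigl[\ccR_\delta(t/\eps^2 - s,\, t/\eps^2 - \ell,\, \xi + B^1_s,\, \eta + B^2_\ell)\bigr]\, ds\, d\ell \;\leq\; C |\log\eps|.
\]
Granted this, an iterative Portenko-style conditioning (integrating out the last pair $(s_n, \ell_n)$ then iterating along the simplex) yields $\E_B[\Q_\delta^n] \leq n!\,(C|\log\eps|)^n$, so that
\[
\E_B\bigl[e^{\beta_\eps^2 \Q_\delta}\bigr] \;\leq\; \sum_n \bigl(\beta^2 C\bigr)^n \;<\; \infty
\]
whenever $\beta^2 < 1/C$, which defines $\beta_0$.

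To prove the key integral bound, use the explicit representation
\[
\ccR_\delta(t_1, t_2, u, v) \;\leq\; \int \varphi(u - u')\,\varphi(v - v')\,(\phi_\delta \ast \check\phi_\delta)(t_1 - t_2,\, u' - v')\, du'\, dv'
\]
(dropping the $e^{-\delta(\cdot)}$ localizer, which is $\leq 1$), where $\check\phi_\delta(s,y) = \phi_\delta(-s,-y)$. Integrating in $\ell$ first, the fact that $\int_\R (\phi_\delta \ast \check\phi_\delta)(\tau,\cdot)\,d\tau$ is a spatial mollifier of unit mass reduces the double integral to the one-path estimate $\int_0^{t/\eps^2} (R_\delta^{\mathrm{sp}} \ast G_s)(\cdot)\, ds \lesssim |\log\eps|$ of Lemma~\ref{l.291}. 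The main obstacle is making this reduction uniform in $\delta$: one must control the boundary effects when $\ell$ lies within $O(\delta^2)$ of $\{0, t/\eps^2\}$ (negligible in measure) and the fluctuation $B^2_\ell - B^2_s$ on the mollifier support $|\ell - s| \leq O(\delta^2)$ (which is a.s.\ $O(\delta^{1-})$ and hence negligible since $\varphi$ is uniformly continuous with compact support). Once these technicalities are handled, everything reduces cleanly to Lemma~\ref{l.exmm}.
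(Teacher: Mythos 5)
Your overall route is valid but genuinely different from the paper's. The paper proves the upper bound without a Taylor expansion: it bounds $|\ccR_\delta(t_1,t_2,x_1,x_2)|\les\delta^{-2}\1_{\{|x_1-x_2|\leq c,\,|t_1-t_2|\leq c\delta^2\}}$, substitutes $s=\ell+\delta^2\sigma$ (the Jacobian exactly cancels the $\delta^{-2}$) to get
\[
\Q_\delta(t/\eps^2,x,x,B^1,B^2)\;\les\;\int_0^c\Big(\int_0^{t/\eps^2}\1_{\{|B^1_{\ell+\delta^2\sigma}-B^2_\ell|\leq c\}}\,d\ell\Big)\,d\sigma,
\]
applies Jensen's inequality on the $\sigma$-average to move it outside the exponential, and then, by shifting $B^1$ at time $\delta^2\sigma$ and using translation invariance, reduces to the one-path exponential moment bound already proved in Lemma~\ref{l.exmm}. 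Your Taylor series plus Portenko iteration reproves that exponential-moment input from scratch rather than invoking it; it works, but is heavier machinery for the same outcome.

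There is one genuine flaw in your sketch of the key integral bound. You take a supremum over arbitrary paths of $B^2$, yet you dispose of the fluctuation $B^2_\ell-B^2_s$ on the window $|\ell-s|\les\delta^2$ by invoking that it is ``a.s.\ $O(\delta^{1-})$.'' An almost-sure modulus of continuity does not control a worst-case supremum over all continuous paths (e.g.\ a straight line with slope $\gg\delta^{-2}$ has $|B^2_\ell-B^2_s|$ arbitrarily large on that window), so this step is unsound as written. Fortunately the bound you claim is true, but the proof must not try to freeze $B^2_\ell\approx B^2_s$. The clean route is to bound $\E_{B^1}[\varphi(\xi+B^1_s-u')]=(\varphi\ast G_s)(u'-\xi)\leq (2\pi s)^{-1}\wedge\|\varphi\|_\infty$ uniformly in $u',\xi$, and only then integrate out $\ell,u',v'$: the $\ell$-integral over $|\ell-s|\les\delta^2$ gives $\delta^2$, the $u'$-integral over $|u'-v'|\les\delta$ gives $\delta^2$, the $v'$-integral of $\varphi$ gives $O(1)$, and the $|\phi_\delta\ast\phi_\delta|$ amplitude $\delta^{-4}$ is cancelled, leaving $\les 1+\int_1^{t/\eps^2}s^{-1}\,ds\les|\log\eps|$ with no reference to $B^2$ at all. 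A minor additional point: since $\phi$ is not assumed nonnegative, $\ccR_\delta$ may change sign, so the Taylor/Portenko argument must be run with $|\ccR_\delta|$ (and $e^{x}\leq e^{|x|}$) throughout; you do this implicitly but it should be stated. Your argument for the lower bound $\geq 1$ (Jensen plus $\E_B[\Q_\delta]=\E_W\big[(\E_B[\int V_\delta])^2\big]\geq0$) is correct.
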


\begin{proof}
Recall that $\Q_\delta(t,x,x,B^1,B^2)=\int_{[0,t]^2}  \ccR_\delta(t-s,t-\ell,x+B_s^1,x+B_{\ell}^2)dsd\ell$. We write $\ccR_\delta$ explicitly:
\[
\begin{aligned}
\ccR_\delta(t_1,t_2,x_1,x_2)=&\int_{\R^{4}} \varphi(x_1-y_1)\varphi(x_2-y_2)\E[\dot{W}_\delta(t_1,y_1)\dot{W}_\delta(t_2,y_2)]dy_1dy_2\\
%=&\int_{\R^{4}} \varphi(x_1-y_1)\varphi(x_2-y_2) e^{-\delta(t_1^2+t_2^2+|y_1|^2+|y_2|^2)} \phi_\delta\star\phi_\delta(t_1-t_2,y_1-y_2) dy_1dy_2\\
\leq &\int_{\R^{4}} \varphi(x_1-y_1)\varphi(x_2-y_2) \phi_\delta\star\phi_\delta(t_1-t_2,y_1-y_2) dy_1dy_2,
\end{aligned}
\]
with ``$\star$'' denoting the convolution. By the fact that $\varphi,\phi$ have compact supports, it is clear that 
\[
\ccR_\delta(t_1,t_2,x_1,x_2) \les \delta^{-2}  \1_{|x_1-x_2|\leq c,|t_1-t_2|\leq c\delta^2}
\]
for some $c>0$. Thus, we have 
\[
\begin{aligned}
\Q_\delta(t/\eps^2,x,x,B^1,B^2)\les &\int_{[0,t/\eps^2]^2} \delta^{-2}\1_{|s-\ell|\leq c\delta^2} \1_{|B^1_s-B^2_\ell|\leq c} dsd\ell\\
&\les  \int_0^{c} \left(\int_0^{t/\eps^2} \1_{|B^1_{\ell+\delta^2 s}-B^2_\ell|\leq c}d\ell\right)ds.
\end{aligned}
\]
By Jensen's inequality, we have 
\[
\begin{aligned}
\E_B[e^{\beta_\eps^2 \Q_\delta(t/\eps^2,x,x,B^1,B^2)}] \leq &\E_B\left[ \exp\bigg(c'\int_0^c \big(\beta_\eps^2\int_0^{t/\eps^2} \1_{|B^1_{\ell+\delta^2 s}-B^2_\ell|\leq c}d\ell\big)ds\bigg)\right]\\
\leq &\tfrac{1}{c}\int_0^c \E_B[\exp\big(cc'\beta_\eps^2\int_0^{t/\eps^2} \1_{|B^1_{\ell+\delta^2 s}-B^2_\ell|\leq c}d\ell\big)] ds,
\end{aligned}
\]
for some $c,c'>0$. Clearly we have 
\[
\begin{aligned}
&\sup_{s\in[0,c]}\E_B[\exp\big(cc'\beta_\eps^2\int_0^{t/\eps^2} \1_{|B^1_{\ell+\delta^2 s}-B^2_\ell|\leq c}d\ell\big)]\\
& \leq \sup_{x\in\R^2} \E_B[ \exp\big(cc'\beta_\eps^2\int_0^{t/\eps^2} \1_{|x+B^1_{\ell}-B^2_\ell|\leq c}d\ell\big)]\les 1
\end{aligned}
\]
for small $\beta$, where the last ``$\les$'' comes from Lemma~\ref{l.exmm}. The proof is complete.
%Thus, $\Q_\eps$ is essentially measuring the mutual ``intersection'' time of $B^1,B^2$. By \cite[Corollary 4.4]{GRZ17} and the fact that $d\geq 3$, the proof is complete.
\end{proof}

Now we can write 
\begin{equation}\label{e.se261}
\begin{aligned}
\Pb[\cU_{\eps}(t,x,\dot{W}_\delta)\leq r] \leq &\Pb[\tfrac{1}{2} e^{-\sqrt{\lambda} \mathrm{dist}(\dot{W}_\delta,A_\lambda(t,x))}\leq r]\\
\leq &\Pb\left[\mathrm{dist}(\dot{W}_\delta,A_\lambda(t,x)) \geq \tfrac{\log (2r)^{-1}}{\sqrt{\lambda}} \right],
\end{aligned}
\end{equation}
where $\mathrm{dist}(\dot{W}_\delta,A_\lambda(t,x))=\inf\{ \|\dot{W}_\delta-\dot{\W}_\delta\|_{L^2(\R^3)}: \dot{\W}_\delta\in A_\lambda(t,x)\}$. Now we can apply \cite[Lemma 4.5]{hu2018asymptotics} to derive that 
\begin{equation}\label{e.se262}
\Pb\left[\mathrm{dist}(\dot{W}_\delta,A_\lambda(t,x)) \geq \tau+2 \sqrt{\log\tfrac{2}{c}} \right] \leq  2e^{-\tau^2/4}
\end{equation}
for all $\tau>0$, where $\lambda,c>0$ are chosen as in Lemma~\ref{l.lowerbd} and are independent of $\eps,\delta>0$ and $t\in[0,T]$. Combining \eqref{e.se261} and \eqref{e.se262}, we have 
\[
\Pb[\cU_\eps(t,x,\dot{W}_\delta)\leq r] \leq  2\exp\left( -\tfrac14\left(\tfrac{\log (2r)}{\sqrt{\lambda}}+2\sqrt{\log \tfrac{2}{c}}\right)^2\right),
\]
which implies $\E[\cU_\eps(t,x,\dot{W}_\delta)^{-n}]\les1$ and completes the proof of \eqref{e.negmm}.

%\bibliographystyle{siam}
%\bibliography{ref}

\end{document}